\documentclass[a4paper,twoside]{article}  

\usepackage{amssymb,amsmath,amsthm,dsfont,amsfonts,color,latexsym}   
\usepackage{hyperref}
\usepackage{CJK}
\usepackage{mathrsfs} 

\definecolor{blue}{rgb}{0.00,0.00,1.00}
\definecolor{red}{rgb}{1.00,0.00,0.00}
\newcommand{\blue}{\color{blue}}


%
\renewcommand{\baselinestretch}{1.2}
\hoffset=0truemm
\voffset=0truemm
\topmargin=-5truemm
\oddsidemargin=-5truemm
\evensidemargin=-5truemm

\textheight=240truemm
\textwidth=170truemm

\makeatletter

\newcommand{\Rmnum}[1]{\expandafter\@slowromancap\romannumeral #1@}
\makeatother
%
\def\bq{\begin{equation}}
	\def\eq{\end{equation}}
\def\ba{\begin{array}{ccc}}
	\def\bal{\begin{array}{lll}}
		\def\ea{\end{array}}

	\def\({\left(}\def\){\right)}
	\def\[{\left[}\def\]{\right]}
	
	\def\bq{\begin{equation}}
		\def\eq{\end{equation}}
	\def\ba{\begin{array}{ccc}}
		\def\bal{\begin{array}{lll}}
			\def\ea{\end{array}}

		\def\({\left(}\def\){\right)}
		\def\[{\left[}\def\]{\right]}
		\def\<{\langle}\def\>{\rangle}

		
		\def \C   {\mathbb{C}}

		\def \R   {\mathbb{R}}

		\def\eps  {\epsilon}

		\def\intt {\int^t_0}


		\def \dx    {\partial_x}

		\def \dxa   {\partial^{\alpha}_x}


		%
		\def\bq{\begin{equation}}
			\def\eq{\end{equation}}
		\def\be{\begin{equation}}
			\def\ee{\end{equation}}
		\def\bma#1\ema{{\allowdisplaybreaks\begin{align}#1\end{align}}}
		\def\bmas#1\emas{{\allowdisplaybreaks\begin{align*}#1\end{align*}}}
		\def\bln#1\eln{{\allowdisplaybreaks\begin{aligned}#1\end{aligned}}}
		\def\nnm{\notag}
		\def\bgr#1\egr{\allowdisplaybreaks\begin{gather}#1\end{gather}}
		\def\bgrs#1\egrs{\allowdisplaybreaks\begin{gather*}#1\end{gather*}}

		\def\bq{\begin{equation}}
			\def\eq{\end{equation}}
		\def\be{\begin{equation}}
			\def\ee{\end{equation}}
		\def\bma#1\ema{{\allowdisplaybreaks\begin{align}#1\end{align}}}
		\def\bmas#1\emas{{\allowdisplaybreaks\begin{align*}#1\end{align*}}}
		\def\bln#1\eln{{\allowdisplaybreaks\begin{aligned}#1\end{aligned}}}
		\def\nnm{\notag}
		\def\bgr#1\egr{\allowdisplaybreaks\begin{gather}#1\end{gather}}
		\def\bgrs#1\egrs{\allowdisplaybreaks\begin{gather*}#1\end{gather*}}

		%
		%

		\theoremstyle{plain}
		\newtheorem{lem}{\bf Lemma}[section]
		\newtheorem{thm}[lem]{\textbf{Theorem}}
		\newtheorem{prop}[lem]{\textbf{Proposition}}
		
		\newtheorem{cor}[lem]{\textbf{Corollary}}

		\newtheorem{theorem}{Theorem}[section]

		\newtheorem{definition}[theorem]{Definition}
		\newtheorem{property}{Property}[section]


		%
		%
		
\begin{document}
			
\date{ }

\title{Green's function and Large time behavior for the 1-D compressible Euler-Maxwell system}

\author{Boyu Liang$^1$, Mingying Zhong$^{1,2}$\\
\emph
	{\small\it $^1$School of  Mathematics and Information Sciences, Guangxi University, China.} \\
	{\small\it E-mail:\ zgbyliang@163.com}\\
	{\small\it $^2$Center for Applied Mathematical of Guangxi (Guangxi University), Guangxi University, China.} \\
	{\small\it E-mail:\ zhongmingying@gxu.edu.cn}
}

\pagestyle{myheadings}
\markboth{1-D compressible Euler-Maxwell system}%
{B.-Y. Liang,  M.-Y. Zhong}

 \maketitle

\thispagestyle{empty}
			
\begin{abstract}
We study Green's function and the large time behavior of the one-dimensional Euler-Maxwell System with relaxation. Firstly, we construct the Green's function of linearized system and obtain the optimal time decay rates of its solutions. And then, we obtain the global existence and the optimal time decay rates of solutions to the nonlinear system by using Green's function and a suitable energy estimate.

\medskip
 {\bf Key words}. Euler-Maxwell system, Green's function,  spectral analysis, optimal time decay rate.

\medskip
{\bf 2010 Mathematics Subject Classification}. 76X05, 76N10, 35B40
\end{abstract}

			
%
\tableofcontents
			
\section{Introduction}
			
The Euler-Maxwell system is an important model used in plasma physics which describe the dynamics of electrons and ions under the  influence of self-consistent electromagnetic field \cite{R-G,Chen}. When the ions do not move and become a uniform background, the compressible one-fluid Euler-Maxwell system in $\mathbb{R}^3$ reads \cite{Duan,C-J-W,Ue-Ka}:
\be \label{3dEM}
\begin{cases}
	\partial_t n + \nabla \cdot (nu) =0, \\
	\partial_t u + u\cdot \nabla u + \frac{1}{n}\nabla P(n) = -(E+u\times B) -\nu u, \\
	\partial_t E - \nabla \times B = nu, \\
	\partial_t B + \nabla \times E = 0, \\
	\nabla \cdot E = n_\infty-n, \quad \nabla \cdot B = 0,
\end{cases}
\ee
with initial data
\be (n, u, E, B)(0,x)=(n_0, u_0, E_0, B_0)(x), \quad  x \in \mathbb{R}^3,\ee
satisfying
$$
	\nabla \cdot E_0 = n_\infty -n_0,\quad  \nabla \cdot B_0 = 0.
$$
Here, $n=n(t,x) > 0$, $u=(u_1,u_2,u_3)(t,x)$, $E=(E_1,E_2,E_3)(t,x)$, $B=(B_1,B_2,B_3)(t,x)$ over $\{ t>0, x\in \mathbb{R}^3 \}$ are the electron density, electron velocity, electric field and magnetic field, respectively. The positive constants $\nu,~ n_\infty$ denote the velocity relaxation frequency and the equilibrium-charged density of ions. $P$ depending only on $n$ denotes the pressure function with the assumption that $P(n)$ is smooth and $P'(n) > 0$ for $n>0$.	

There has been some mathematical research on the Euler-Maxwell system. For the case of one dimensional, Chen, Jerome and Wang in \cite{C-J-W} proved the global existence of a weak entropy solution by using the Grodunov scheme with the fractional step as well as the compensated compactness method. Later, Jerome \cite{Jerome}  established a local existence theory of smooth solutions for the Cauchy problem in  $\mathbb{R}^3$. As for the global existence, Duan in \cite{Duan} proved a global existence theorem for small initial perturbation, which near a constant equilibrium with zero background magnetic field, in $H^s(\mathbb{R}^3)$ with $s\ge 4$. And he also obtain the optimal time decay rates of the perturbed solution under the regularity assumption $s\ge 13$ by the method of Green's function. In \cite{Ue-Ka}, Ueda and Kawashima obtained a similar result base on the time weighted energy method under the regularity assumption $s\ge 6$. For the case of non-constant background  density, Liu and Zhu in \cite{Liu-Zhu-1} shown that there exist stationary
solutions when the background density is a small perturbation of a positive constant state. Later, Liu, Yin and Zhu in \cite{Liu-Yin-Zhu} studied the asymptotic behaviors between Euler-Maxwell equation and its corresponding Euler-Poisson equation for one dimensional. In the periodic case, i.e. $x\in \mathbb{T}^3$, Peng \cite{Peng} proved the existence of smooth solutions and the solutions converge toward non-constant equilibrium states with zero velocity.

For the two-fluid Euler-Maxwell system, in which the motion of ions is taken into account, Duan, Liu and Zhu in \cite{Duan-Liu-Zhu} established the global existence of solutions and their rates of convergence are obtained for the isentropic case in $\mathbb{R}^3$. Peng in \cite{Peng2} studied the Cauchy and periodic problems for the same model. In \cite{Wang-Feng-Li}, Wang, Feng and Li investigated the nonisentropic case.

Regarding the relaxation limits, Peng, Wang and Gu \cite{Peng-Wang-Gu} proved that the Euler-Maxwell system converges to the drift-diffusion equations locally in time for well-prepared initial data in periodic domain. Subsequently, Hajjej and Peng \cite{H-P} carried out a new asymptotic expansion to refine the results in \cite{Peng-Wang-Gu}. Furthermore, they constructed initial layer corrections to address ill-prepared initial data. Li, Peng and Zhao \cite{Li-Peng-Zhao} utilized the stream function method to derive error estimates between Euler-Maxwell system and the drift-diffusion equations for periodic problem. Recently, Crin-Barat, Peng, Shou and Xu \cite{CB-Peng-Shou-Xu} introduced an effective velocity motivated by Darcy's law, a key step in establishing global-in-time strong convergence in $\mathbb{R}^3$. Additionally, their relaxation procedure employs two initial layer corrections to handle general ill-prepared initial data for strong convergence.
			
In this paper, we consider one-dimensional Euler-Maxwell system, i.e. $x\in\mathbb{R}$. Then, the operator $\nabla$ in \eqref{3dEM} denotes $\nabla=(\partial_x,0,0)^{T}$. Thus, the one-dimensional Euler-Maxwell system can be written as
\be \label{1dEM}
\begin{cases}
	\partial_t n + \partial_x(nu_1) =0, \\
	\partial_t u_1 + u_1\partial_x u_1 + \frac{P'(n)}{n}\partial_x n = -(E_1-u_r\cdot\mathbb{O}_1B_r) -\nu u_1, \\
	\partial_t u_r + u_1\partial_x u_r = -(E_r+u_1\mathbb{O}_1B_r-B_1\mathbb{O}_1u_r) -\nu u_r, \\	
	\partial_t E_r - \mathbb{O}_1\partial_x B_r = nu_r, \\
	\partial_t B_r + \mathbb{O}_1\partial_x E_r = 0, \\
	\partial_t E_1 = nu_1,\quad \partial_x E_1 = n_\infty-n, \\
\partial_t B_1 = 0, \quad   \partial_x B_1=0.
\end{cases}
\ee
Here, $u_r(t,x) = (u_2,u_3)(t,x),~E_r(t,x) = (E_2,E_3)(t,x),~B_r(t,x) = (B_2,B_3)(t,x)$ and $\mathbb{O}_1 =\left( \ba 0 & -1\\ 1 & 0 \ea \right)$. The  last two equations in \eqref{1dEM} show that $B_1$ is a constant. Throughout this paper, we set
\be n_\infty =1, \quad \nu =1, \quad B_1=1. \ee

Notice that the system \eqref{1dEM} admits a constant steady state of the form
\be (n,u,E,B) = (n_\infty,0,0,B_\infty), \ee
where $n_\infty=1 $ and  $B_\infty = (1,0,0)$. Now, we set
$$
	\rho = n - 1, \quad \gamma =P'(1).
$$
Then, the Cauchy problem \eqref{1dEM} can be reformulated as
\be \label{re-em_1}
\begin{aligned}
	\begin{cases}
		\partial_t \rho + \partial_x u_1 = -  \partial_x (\rho u_1), \\
		\partial_t u_1 + \gamma\partial_x \rho +   E_1 +   u_1 =  - u_1 \partial_x u_1 -  \left( \frac{P'(\rho +1)}{\rho+1} - \gamma \right)\partial_x \rho+  u_r\cdot\mathbb{O}_1 B_r, \\
		\partial_t u_r -\mathbb{O}_1u_r  +   u_r +   E_r=  - u_1 \partial_x u_r -  u_1\mathbb{O}_1 B_r,\\
		\partial_t E_r - \mathbb{O}_1\partial_x B_r -   u_r =   \rho u_r, \\
		\partial_t B_r + \mathbb{O}_1\partial_x E_r = 0, \\
		\partial_tE_1-  u_1 = \rho u_1,\quad \partial_x E_1 = -\rho,
	\end{cases}
\end{aligned}
\ee
with initial data
\be \label{re-initial}
	(\rho, u_1, u_r, E_1,E_r, B_r)(0,x) = (\rho_0, u_{10}, u_{r0}, E_{10}, E_{r0}, B_{r0})(x), \quad x \in \mathbb{R},
\ee
satisfying
\be \label{compatable} \partial_x E_{10} = -\rho_0. \ee

\textbf{Notations:} Before stating the main results of this paper, we list some notations. Define the Fourier transform of $f=f(t,x)$ by
$$
\hat{f}(t,k) = \mathcal{F}f(t,k) = \frac{1}{\sqrt{2\pi}}\int_{\mathbb{R}} e^{-ixk}f(t,x) \,dx,
$$
where and throughout this paper we denote $i=\sqrt{-1}$.

We denote by $\| \cdot \|_{L^1}$, $\| \cdot \|_{L^2}$ and $\| \cdot \|_{L^\infty}$ the norms of the function spaces $L^1(\mathbb{R})$, $L^2(\mathbb{R})$ and $L^\infty(\mathbb{R})$ respectively. For any integer $N\ge 1$, the norm of  space $H^N(\mathbb{R})$ is denoted by $\| \cdot \|_{H^N}$ .

\bigskip

In this paper we concern the global existence and time decay rates of solutions to the Cauchy problem \eqref{re-em_1}--\eqref{re-initial}. The main theorem is stated as follows.
\begin{thm} \label{global-tm}
Let $N \ge 2$ and \eqref{compatable} holds. There exists a positive constant $\delta$ such that if
$$
	\|(\rho_0,u_{10},u_{r0},E_{10},E_{r0},B_{r0}) \|_{H^N} \le \delta,
$$
then, the Cauchy problem \eqref{re-em_1}--\eqref{re-initial} has a unique global solution $(\rho,u_1,u_r,E_1,E_r,B_r)$
satisfying
\bma \label{energy-estimate}
	&\quad \| 	(\rho,u_1,u_r,E_1,E_r,B_r) \|_{H^N}^2 + \int_0^t \left( \| (\rho,u_1,u_r)\|_{H^N}^2 + \| (E_1,E_r)\|_{H^{N-1}}^2 + \| \partial_x B_r\|_{H^{N-2}}^2\right) \,d\tau \nnm\\
	&\le C\| (\rho_0,u_{10},u_{r0},E_{10},E_{r0},B_{r0})\|_{H^N}^2, \quad t \ge 0.
\ema
\end{thm}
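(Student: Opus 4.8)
The plan is to establish Theorem~\ref{global-tm} by the classical two-step scheme: a local well-posedness result for \eqref{re-em_1}--\eqref{re-initial}, together with a closed uniform-in-time a priori estimate of the form \eqref{energy-estimate}, which via a continuation argument upgrade the local solution to a global one. Local existence is standard: \eqref{re-em_1} is, after symmetrization, a quasilinear symmetric hyperbolic system with a bounded linear lower-order term; its symmetrizer is smooth and positive definite as long as $\|\rho\|_{L^\infty}$ is small (so that $n=\rho+1>0$), which in one space dimension is guaranteed by the smallness of $\|\rho\|_{H^1}$, so for $N\ge 2$ (so that $H^{N-1}\hookrightarrow L^\infty$ and $H^N(\mathbb R)$ is a Banach algebra) there is a unique solution in $C([0,T_0];H^N)$ with $T_0$ depending only on $\|(\rho_0,u_{10},u_{r0},E_{10},E_{r0},B_{r0})\|_{H^N}$. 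I would also record first that the constraint \eqref{compatable}, $\partial_x E_1=-\rho$, is propagated by the flow, since the first and last equations of \eqref{re-em_1} give $\partial_t(\partial_x E_1+\rho)=0$; equivalently $\partial_x^{j+1}E_1=-\partial_x^{j}\rho$ for all $t$ and $j$, a relation used repeatedly below.

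The core is the a priori estimate: assuming on $[0,T]$ a solution with $\mathcal N(t):=\|(\rho,u_1,u_r,E_1,E_r,B_r)(t)\|_{H^N}$ small, I would prove \eqref{energy-estimate}. The only direct dissipation is the relaxation damping of $u_1,u_r$; that of $\rho,E_1,E_r$, and (with a loss of two derivatives) of $B_r$, must be produced from the coupling through interaction functionals. Accordingly I would work with a Lyapunov functional
\[
\mathcal L \;=\; \mathcal E_N + \kappa_1\,\mathcal I_\rho + \kappa_2\,\mathcal I_{E_1} + \kappa_3\,\mathcal I_{E_r} + \kappa_4\,\mathcal I_{B_r},
\]
where $\mathcal E_N\simeq\mathcal N^2$ is the energy of the symmetrized system (its symmetrizer differs from the constant symmetrizer of the linearization by $O(\rho)$), whose time derivative yields the relaxation dissipation $c\|(u_1,u_r)\|_{H^N}^2$ with all linear field--velocity couplings cancelling (the $u_1E_1$ and $u_r\!\cdot\!E_r$ terms against those in the $E_1$- and $E_r$-equations, the $E_r$--$B_r$ terms because $\mathbb O_1$ is skew), plus commutator and nonlinear terms, and
\[
\mathcal I_\rho=\sum_{j=0}^{N-1}\langle\partial_x^j u_1,\partial_x^{j+1}\rho\rangle,\qquad
\mathcal I_{E_1}=\langle u_1,E_1\rangle,
\]
\[
\mathcal I_{E_r}=\sum_{j=0}^{N-1}\langle\partial_x^j u_r,\partial_x^j E_r\rangle,\qquad
\mathcal I_{B_r}=-\sum_{j=0}^{N-2}\langle\partial_x^j E_r,\mathbb O_1\partial_x^{j+1}B_r\rangle.
\]
Differentiating these along \eqref{re-em_1} and using $\partial_x E_1=-\rho$ and $\mathbb O_1^2=-\mathbf I$: $\mathcal I_\rho$ produces $c\|\rho\|_{H^N}^2$ (hence, via the constraint, also $\|\partial_x E_1\|_{H^{N-1}}^2$) at the cost of $C\|u_1\|_{H^N}^2$; $\mathcal I_{E_1}$ produces $c\|E_1\|^2$ at the cost of $C\|u_1\|^2$; $\mathcal I_{E_r}$ produces $c\|E_r\|_{H^{N-1}}^2$ at the cost of $C\|u_r\|_{H^N}^2+\eta\|\partial_x B_r\|_{H^{N-2}}^2$ (the last after integrating by parts to shift a derivative off $B_r$ in the worst term, followed by a weighted Young inequality); and $\mathcal I_{B_r}$ produces $c\|\partial_x B_r\|_{H^{N-2}}^2$ at the cost of $C\|E_r\|_{H^{N-1}}^2$. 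Choosing first $\eta$ small, then $\kappa_1,\kappa_2,\kappa_3$ small, then $\kappa_4$ in a narrow window $\eta\kappa_3\lesssim\kappa_4\lesssim\kappa_3$, makes every cross term absorbable, and with $\mathcal D:=\|(\rho,u_1,u_r)\|_{H^N}^2+\|(E_1,E_r)\|_{H^{N-1}}^2+\|\partial_x B_r\|_{H^{N-2}}^2$ it yields $\frac{d}{dt}\mathcal L+c\,\mathcal D\le(\text{nonlinear terms})$.

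It then remains to absorb the nonlinear terms, all of the type $\langle\partial_x^j(\text{quadratic}),\partial_x^j(\cdot)\rangle$: expand by Leibniz and use one-dimensional Gagliardo--Nirenberg/Moser and commutator estimates. Contributions whose ``large'' factors all lie in $\mathcal D$ (velocity and density derivatives up to order $N$, field derivatives up to order $N-1$, $\partial_x B_r$ up to order $N-1$) are bounded by $C\mathcal N\,\mathcal D$ directly. The delicate pieces are those carrying a top-order factor $\partial_x^N E_r$ or $\partial_x^N B_r$, which lie in $\mathcal E_N$ but not in $\mathcal D$ (for instance the term coming from $\rho u_r$ in the $E_r$-equation); these are handled by Young's inequality with the weight placed on that factor, so that the remaining coefficient is a $W^{m,\infty}$-type norm of $(\rho,u_1,u_r)$, which in one dimension is $\lesssim\|(\rho,u_1,u_r)\|_{H^N}^2\le\mathcal D$, and using $\mathcal E_N\lesssim\mathcal N^2$ this makes them $\lesssim\mathcal N^2\mathcal D$, absorbable as well. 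Altogether $\frac{d}{dt}\mathcal L+c\,\mathcal D\le C(\mathcal N+\mathcal N^2)\mathcal D$, so under a bootstrap assumption $\mathcal N\le C_0\delta$ with $\delta$ small the right-hand side is $\le\frac c2\mathcal D$, whence $\mathcal L(t)+\frac c2\int_0^t\mathcal D\,d\tau\le\mathcal L(0)\le C\mathcal N(0)^2$; since $\mathcal L\simeq\mathcal N^2$ this closes the bootstrap and, via continuation from the local solution, gives the global solution together with \eqref{energy-estimate}. The main obstacle, characteristic of Euler--Maxwell, is the weak and derivative-losing dissipation of the electromagnetic block: extracting $\|\partial_x B_r\|_{H^{N-2}}^2$ from the coupled $(E_r,B_r)$ subsystem without creating an unclosable derivative loop between $\mathcal I_{E_r}$ and $\mathcal I_{B_r}$ — which forces the hierarchy of small constants above and the restriction to $\partial_x B_r$ in $H^{N-2}$ — and simultaneously absorbing the top-order nonlinearities in $E_r$ and $B_r$ that $\mathcal D$ does not control. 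All the remaining estimates are of routine hyperbolic-with-relaxation type.
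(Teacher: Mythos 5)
Your proposal is correct and follows essentially the same route as the paper: a weighted (symmetrized) $H^N$ energy giving the relaxation dissipation of $(u_1,u_r)$, cross functionals $\sum_j\langle\partial_x^j u_1,\partial_x^{j+1}\rho\rangle$, $\langle u_1,E_1\rangle$-type, $\sum_j\langle\partial_x^j u_r,\partial_x^j E_r\rangle$ (with the integration by parts shifting a derivative off $B_r$) and $-\sum_j\langle\partial_x^j E_r,\mathbb{O}_1\partial_x^{j+1}B_r\rangle$ combined through a hierarchy of small constants, and then local existence (Kato/Taylor) plus continuation. The only deviation is cosmetic: you take the $u_1$--$E_1$ functional at order zero only and recover $\|E_1\|_{H^{N-1}}$ via the constraint $\partial_x E_1=-\rho$, whereas the paper sums that functional over $0\le\alpha\le N-1$; both close in the same way.
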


\begin{thm} \label{Thm-rates}
Let $N \ge 4$ and \eqref{compatable} holds. There exists a positive constant $\delta_0$ such that if
$$
\|(\rho_0,u_{10},u_{r0},E_{10},E_{r0},B_{r0}) \|_{H^N\cap L^1} \le \delta_0,
$$
then, the solution $(\rho,u_1,u_r,E_1,E_r,B_r)$ to the system \eqref{re-em_1}--\eqref{re-initial} satisfies
\bmas
	\| \partial_x^\alpha(\rho,u_1)(t)\|_{L^2} &\le C\delta_0(1+t)^{-\frac54-\frac{\alpha}{2}}, \\
	\| \partial_x^\alpha E_1(t)\|_{L^2} &\le C\delta_0(1+t)^{-\frac54},\\
	\| \partial_x^\alpha (u_r,E_r)(t)\|_{L^2} &\le C\delta_0(1+t)^{-\frac34-\frac{\alpha}{2}}, \\
	\|\partial_x^\alpha B_r(t) \|_{L^2} &\le C\delta_0(1+t)^{-\frac14-\frac{\alpha}{2}},
\emas
for $\alpha=0,1$ and any $t\ge 0$. Moreover, there is a positive constant $d_0$ such that if $\inf\limits_{|k|\le \epsilon}|\hat{B}_{r0}(k)|\ge d_0$, then, the solution $(u_r,E_r,B_r)$ satisfies
\bmas
	C_1 d_0(1+t)^{-\frac{3}{4}-\frac{\alpha}{2}} \le \| \dxa (u_r,E_r)(t)\|_{L^2} &\le C_2\delta_0(1+t)^{-\frac{3}{4}-\frac{\alpha}{2}},\\ 
	C_1 d_0(1+t)^{-\frac{1}{4}-\frac{\alpha}{2}} \le \| \dxa B_r(t)\|_{L^2} &\le C_2\delta_0(1+t)^{-\frac{1}{4}-\frac{\alpha}{2}}, 
\emas
where $C_2\ge C_1>0$ are two constants and $t>0$ is sufficiently large.
\end{thm}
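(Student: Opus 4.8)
The plan is the by-now-standard scheme: the Green's function of the linearized system (constructed in the preceding sections) together with Duhamel's formula and a continuity argument, the global energy bound \eqref{energy-estimate} of Theorem~\ref{global-tm} (available since $N\ge4\ge2$) furnishing the a priori control of the nonlinearity. Write \eqref{re-em_1} as $\partial_tU=\mathbb{A}U+\mathcal{N}(U)$, $U=(\rho,u_1,u_r,E_1,E_r,B_r)$, where each entry of $\mathcal{N}(U)$ is, up to the smooth factor $\tfrac{P'(\rho+1)}{\rho+1}-\gamma=O(\rho)$, a product of two entries of $U$ or of one entry and $\partial_x$ of another, and $\mathcal{N}(U)$ preserves the constraint $\partial_xE_1=-\rho$. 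The operator $\mathbb{A}$ decouples into a longitudinal block on $(\rho,u_1,E_1)$ and a transverse block on $(u_r,E_r,B_r)$; on the constraint manifold $\partial_xE_1=-\rho$ the longitudinal semigroup decays exponentially in $L^2$, uniformly in frequency, whereas the transverse semigroup carries exactly one diffusive slow mode, because the linear equation $\partial_tB_r+\mathbb{O}_1\partial_xE_r=0$ conserves $\widehat{B_r}(t,0)$ and the low-frequency spectral branch is $\lambda_0(k)=-a|k|^2+O(|k|^4)$ with $a>0$, on whose eigenspace $\widehat{u_r},\widehat{E_r}$ are $O(|k|)$ relative to $\widehat{B_r}$. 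This is the mechanism behind the four distinct rates, and the Green's function analysis yields, for $U_0\in L^1\cap H^N$ and $\alpha=0,1$,
\begin{align*}
\|\partial_x^\alpha(e^{t\mathbb{A}}U_0)_{(\rho,u_1,E_1)}\|_{L^2}&\le Ce^{-ct}\|U_0\|_{H^1},\\
\|\partial_x^\alpha(e^{t\mathbb{A}}U_0)_{(u_r,E_r)}\|_{L^2}&\le C(1+t)^{-\frac34-\frac\alpha2}\|U_0\|_{L^1\cap H^N},\\
\|\partial_x^\alpha(e^{t\mathbb{A}}U_0)_{B_r}\|_{L^2}&\le C(1+t)^{-\frac14-\frac\alpha2}\|U_0\|_{L^1\cap H^N},
\end{align*}
together with the matching low-frequency lower bounds $\|\partial_x^\alpha(e^{t\mathbb{A}}U_0)_{B_r}\|_{L^2}\ge c(1+t)^{-\frac14-\frac\alpha2}\inf_{|k|\le\epsilon}|\hat{B}_{r0}(k)|$ and $\|\partial_x^\alpha(e^{t\mathbb{A}}U_0)_{(u_r,E_r)}\|_{L^2}\ge c(1+t)^{-\frac34-\frac\alpha2}\inf_{|k|\le\epsilon}|\hat{B}_{r0}(k)|$ for large $t$, modulo contributions decaying faster by a factor $(1+t)^{-1/2}$.

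Next set
\begin{align*}
\mathcal{M}(t):=\sup_{0\le s\le t}\sum_{\alpha=0,1}\Big[&(1+s)^{\frac54+\frac\alpha2}\|\partial_x^\alpha(\rho,u_1)(s)\|_{L^2}+(1+s)^{\frac54}\|\partial_x^\alpha E_1(s)\|_{L^2}\\
&+(1+s)^{\frac34+\frac\alpha2}\|\partial_x^\alpha(u_r,E_r)(s)\|_{L^2}+(1+s)^{\frac14+\frac\alpha2}\|\partial_x^\alpha B_r(s)\|_{L^2}\Big]
\end{align*}
and use $U(t)=e^{t\mathbb{A}}U_0+\int_0^te^{(t-s)\mathbb{A}}\mathcal{N}(U(s))\,ds$. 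The target is the closed estimate $\mathcal{M}(t)\le C\delta_0+C\mathcal{M}(t)^2$, which with $\mathcal{M}(0)\lesssim\delta_0$ and continuity forces $\mathcal{M}(t)\le C\delta_0$ for $\delta_0$ small -- precisely the asserted upper bounds. Feeding the linear estimates into the Duhamel integral, split $\int_0^t=\int_0^{t/2}+\int_{t/2}^t$, using the low-frequency $L^1\!\to\!L^2$ decay on $[0,t/2]$ and the high-frequency part (exponential in the longitudinal directions, parabolic with a derivative loss in the transverse directions) on $[t/2,t]$, then evaluate the time convolutions by the usual lemma. Every nonlinear factor is controlled by $\mathcal{M}(t)$ and \eqref{energy-estimate}, using the one-dimensional inequalities $\|f\|_{L^\infty}\le C\|f\|_{L^2}^{1/2}\|\partial_xf\|_{L^2}^{1/2}$, $\|fg\|_{L^1}\le\|f\|_{L^2}\|g\|_{L^2}$, and Leibniz/Gagliardo--Nirenberg for higher Sobolev norms of products.

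The delicate step -- and the one I expect to be the main obstacle -- is that $B_r$ decays only at the slow rate $(1+t)^{-1/4}$, so any nonlinear term with a $B_r$ factor threatens the faster rates of the other unknowns; three structural facts resolve it. First, the only such term feeding the longitudinal block is $u_r\cdot\mathbb{O}_1B_r$ in the $u_1$-equation, and there $\|u_r\cdot\mathbb{O}_1B_r\|_{L^2}\le\|u_r\|_{L^\infty}\|B_r\|_{L^2}\lesssim\mathcal{M}^2(1+s)^{-5/4}$, since $\|u_r\|_{L^\infty}\lesssim\|u_r\|_{L^2}^{1/2}\|\partial_xu_r\|_{L^2}^{1/2}\lesssim\mathcal{M}(1+s)^{-1}$ by the interpolation above; against the exponentially decaying longitudinal semigroup this reproduces the $(1+t)^{-5/4}$ rate for $(\rho,u_1,E_1)$. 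Second, the $B_r$-equation has no nonlinearity, and the transverse sources $\mathcal{N}_{u_r}=-u_1\partial_xu_r-u_1\mathbb{O}_1B_r$, $\mathcal{N}_{E_r}=\rho u_r$ each carry a fast-decaying factor ($u_1$ or $\rho$, of rate $(1+s)^{-5/4}$), so $\|(\mathcal{N}_{u_r},\mathcal{N}_{E_r})(s)\|_{L^1}\lesssim\mathcal{M}^2(1+s)^{-3/2}$; combined with the transverse kernels this keeps $(u_r,E_r)$ at $(1+t)^{-3/4}$ and $B_r$ at $(1+t)^{-1/4}$, and moreover a source placed in the $u_r$- or $E_r$-slot reaches the $B_r$-output (resp.\ the $(u_r,E_r)$-output) through the slow mode with an extra $|k|$ (resp.\ $|k|^2$), so that the low-frequency part of the nonlinear contribution to $B_r$ is $O(\mathcal{M}^2(1+t)^{-3/4})$ and that to $(u_r,E_r)$ is $O(\mathcal{M}^2(1+t)^{-5/4})$. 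Third, the $\alpha=1$ weights require $\partial_x$ of the quadratic terms in $L^1\cap L^2$ and, on $[t/2,t]$, $H^{N-1}$-norms of $\mathcal{N}(U)$ (the transverse high-frequency decay costs derivatives); placing the surplus derivatives on the fast-decaying factors and interpolating closes this, and it is precisely for this bookkeeping that $N\ge4$ is used. Collecting all contributions yields the quadratic inequality for $\mathcal{M}(t)$, hence the upper bounds.

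For the optimality statement, restrict to the frequency range $|k|\le\epsilon$, so that $\|U(t)\|_{L^2}$ is bounded below by the $L^2$ norm of its low-frequency part. By the low-frequency lower bounds above, the linear part contributes at least $c\,d_0(1+t)^{-\frac14-\frac\alpha2}$ to $\partial_x^\alpha B_r$ and at least $c\,d_0(1+t)^{-\frac34-\frac\alpha2}$ to $\partial_x^\alpha(u_r,E_r)$, while the linear slow-mode contributions carried by $(u_{r0},E_{r0})$, the low-frequency fast modes, and (by the second observation above) the low-frequency part of the Duhamel term all decay at least one factor $(1+t)^{-1/2}$ faster -- the last being $O(\mathcal{M}(t)^2(1+t)^{-\frac34-\frac\alpha2})$ in the $B_r$-slot and $O(\mathcal{M}(t)^2(1+t)^{-\frac54-\frac\alpha2})$ in the $(u_r,E_r)$-slot. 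Since $\mathcal{M}(t)\le C\delta_0$ (and $d_0\le C\|B_{r0}\|_{L^1}\le C\delta_0$, so the hypotheses are compatible), for $t$ large the error terms are dominated by half the linear part, which gives the claimed lower bounds with $C_1\sim c/2$ and completes the proof.
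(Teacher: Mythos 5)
Your proposal follows essentially the same route as the paper: Duhamel's formula with the Green's functions $G_f$, $G_e$, a time-weighted functional (the paper's $Q(t)$, which additionally carries $\|\partial_x^2(\rho,u_1,u_r,B_r)\|_{H^2}$ so that intermediate derivatives can be recovered by Gagliardo--Nirenberg interpolation, exactly the "surplus derivative" bookkeeping you allude to), closure of the quadratic inequality $Q\le C\delta_0+CQ^2$, and the lower bounds obtained by subtracting the faster-decaying linear and Duhamel contributions from the linear slow-mode lower bound driven by $\hat B_{r0}$. Apart from minor quantitative imprecisions (the nonlinear Duhamel terms gain only about a quarter power, not a half, in some slots, and the $\alpha=1$ longitudinal linear estimate needs $H^2$ data because $\hat G_f^{12}$ grows like $|k|$), the argument is the paper's.
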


 We now outline the main ideas in the proof of the above theorem. The results in Theorem \ref{Thm-rates} about the time decay rates of the solutions to the Euler-Maxwell system are obtained based on the Green's function of the linearized system. In the one-dimensional case, the linearized Euler-Maxwell system can automatically decouple into a system for $(u_1,E_1)$ and a system for $(u_r,E_r,B_r)$, see \eqref{fEM} and \eqref{eEM}.

And then, by applying Fourier transform and utilizing the right and left eigenvectors of system to construct Green's functions (refer to Lemma \ref{etA}), we obtain the Green's functions $\hat{G}_f$ and $\hat{G}_e$ for the systems \eqref{fEM} and \eqref{eEM} respectively, which have the following pointwise behaviors in Fourier frequency space. For $\hat{G}_f(t,k)$, it holds that for $|k|\le \frac{\sqrt{3\gamma}}{2\gamma}$,
$$
	|\hat{G}_f(t,k)| \le C \left(  \ba 1 & 1  \\  1 & 1 \ea \right)e^{-\frac{t}{2}},
$$
while for $|k|>\frac{\sqrt{3\gamma}}{2\gamma}$,
$$
	|\hat{G}_f(t,k)| \le C \left(  \ba 1 & |k| \\ |k|^{-1} & 1 \ea \right)e^{-\frac{t}{2}}.
$$
For $\hat{G}_e(t,k)$, it holds that for $|k|< \epsilon$,
\bmas
	|\hat{G}_e(t,k)| \le C \left( \ba |k|^2I_1 & |k|^2I_1 & |k|I_1 \\ |k|^2I_1 & |k|^2I_1 & |k|I_1 \\ |k|I_1 & |k|I_1 & I_1 \ea \right)e^{-\frac{k^2t}{2}} + C\left( \ba I_1 & I_1 & |k|I_1 \\ I_1 & I_1 & |k|I_1 \\ |k|I_1 & |k|I_1 & |k|^2I_1 \ea \right)e^{-\frac{t}{8}},
\emas
for $|k|>R$,
\bmas
	|\hat{G}_e(t,k)| \le &\,C \left( \ba I_1 & |k|^{-2}I_1 & |k|^{-1}I_1 \\ |k|^{-2}I_1 & |k|^{-4}I_1 & |k|^{-3}I_1 \\ |k|^{-1}I_1 & |k|^{-3}I_1 & |k|^{-2}I_1 \ea \right)e^{-\frac{t}{2}}\\
&+ C\left( \ba |k|^{-2}I_1 & |k|^{-1}I_1 & |k|^{-1}I_1 \\ |k|^{-1}I_1 & I_1 & I_1 \\ |k|^{-1}I_1 & I_1 & I_1 \ea \right)e^{-\frac{t}{4 k^2}},
\emas
and for $\epsilon\le|k|\le R$,
$$
	|\hat{G}_e(t,k)| \le C \left( \ba I_1 & I_1 & I_1 \\ I_1 & I_1 & I_1 \\ I_1 & I_1 & I_1 \ea \right)e^{-ct}.
$$
Here, $0<\epsilon\ll 1 \ll R < \infty$ are two properly chosen constants, and $I_1= \left( \ba 1 & 1\\ 1 & 1\ea\right)$.

Next, by using Fourier analysis techniques, we can show the time decay rates of the solutions for linearized system, specifically, for any $t\ge0$, if $u_{10}\in H^{\alpha}$ and $E_{10}\in H^{\alpha+1}$, it holds
$$
	\| \partial_x^\alpha (u_1,E_1) \|_{L^2} \le Ce^{-\frac{t}{2}},
$$
and for any $t > 0$ large enough, provided that if $u_{r0}\in H^{2\alpha+2} \cap L^1 $, $E_{r0}\in H^{2\alpha+3} \cap L^1$ and $\inf_{|k|\le \epsilon}|\hat{B}_{r0}(k)| \ge d_0$,  one has
\bmas
	C_1(1+t)^{-\frac34-\frac{\alpha}{2}}\le \| \partial_x^\alpha (u_r,E_r) \|_{L^2} &\le C_2(1+t)^{-\frac34-\frac{\alpha}{2}},\\
	C_1(1+t)^{-\frac14-\frac{\alpha}{2}}\le \| \partial_x^\alpha B_r \|_{L^2} &\le C_2(1+t)^{-\frac14-\frac{\alpha}{2}}.
\emas

Finally,  by applying Duhamel's principle and the virtue of energy estimate \eqref{energy-estimate} in Theorem \ref{global-tm}, one can establish the optimal decay rates on the global solution to the nonlinear Euler-Maxwell system given in Theorem \ref{Thm-rates}.

We should make some remarks on the time decay rates. First, the solutions of $u_r,E_r$ and $B_r$ have optimal decay rates. Not only do they share the same rates as in the linearized case, but their upper and lower bounds also coincide. However, the decay rates of the solutions $\rho,u_1$ and $E_1$ are not optimal. Unlike the exponential decay rates observed in the linearized case, these solutions exhibit polynomial decay rates. This behavior is primarily attributed to the nonlinear effect, specifically caused by the nonlinear term $u_r\cdot\mathbb{O}_1B_r$.

The rest of this paper is organized as follows. In Section 2, we study the spectrum of the linearized Euler-Maxwell system and obtain the optimal time decay rates  of the global solution to the linearized system. In Section 3, we establish the energy inequality and obtain the optimal time decay rates of the global solution to the original nonlinear Euler-Maxwell system.

\setcounter{equation}{0}			
\section{Green's Function}

\subsection{Spectrum Analysis}

In this subsection, we study the spectrum of the linearized system corresponding to \eqref{re-em_1}. Notice that it can be decoupled into two systems, one of which can read as
\be  \label{fEM_origin}
\begin{cases}
	\partial_t\rho + \partial_x u_1 =0\\
	\partial_t u_1 + \gamma\partial_x \rho +   E_1 +   u_1 = 0 ,\\
	\partial_t E_1-  u_1 =0 ,\quad
	\partial_x E_1 = - \rho\\
	(\rho,u_1, E_1)(0,x) = (\rho_0,u_{10}, E_{10}),
\end{cases}
\ee
and the other one takes the form as
\be  \label{eEM}
\begin{cases}
	\partial_t u_r -\mathbb{O}_1 u_r +   u_r +   E_r  = 0 ,\\
	\partial_t E_r - \mathbb{O}_1\partial_x B_r -   u_r =0 ,\\
	\partial_r B_r + \mathbb{O}_1\partial_x E_r =0,\\
	(u_r, E_r, B_r)(0,x) = (u_{r0}, E_{r0}, B_{r0}).
\end{cases}
\ee
Notice that, under the relation $\partial_xE_1 = -\rho$ and \eqref{compatable}, the system \eqref{fEM_origin} can be written as
\be  \label{fEM}
\begin{cases}
	\partial_t u_1 - \gamma\partial_{xx} E_1 +   E_1 +   u_1 = 0 ,\\
	\partial_t E_1-  u_1 =0 ,\\
	(u_1, E_1)(0,x) = (u_{10}, E_{10}).
\end{cases}
\ee
			
After taking the Fourier transform of \eqref{fEM} and \eqref{eEM}, we obtain
\be \label{trans_fEM}
	\partial_t \left(  \ba \hat{u}_1 \\ \hat{E}_1 \ea \right) =
	 -\hat{A}_1\left( \ba \hat{u}_1 \\ \hat{E}_1 \ea \right),
\ee
and
\be \label{trans_eEM}
	\partial_t \left(  \ba \hat{u}_r \\ \hat{E}_r\\ \hat{B}_r \ea \right) =
	-\hat{A}_r \left(  \ba \hat{u}_r \\ \hat{E}_r\\ \hat{B}_r \ea \right),
\ee
where
\be
	\hat{A}_1 = \left( \ba 1  &  1 +\gamma k^2\\ -1  & 0 \ea \right), \quad
	\hat{A}_r =\left( \ba   \mathbb{I}_2 -\mathbb{O}_1 &  \mathbb{I}_2 & 0 \\ - \mathbb{I}_2 & 0 & - ik\mathbb{O}_1 \\ 0 & ik\mathbb{O}_1 & 0 \ea \right),
\ee
with $\mathbb{I}_2 = \left( \ba 1 & 0\\ 0 & 1 \ea \right)$ and $\mathbb{O}_1 =\left( \ba 0 & -1\\ 1 & 0 \ea \right)$. By a direct calculation, we have
\bmas
	\det(\lambda\mathbb{I}_2+\hat{A}_1) &=\lambda^2+ \lambda+ 1+\gamma k^2,\\
	\det(\lambda\mathbb{I}_6+\hat{A}_r) &=(\lambda^3+ \lambda^2+(k^2+ 1)\lambda+  k^2)^2+(\lambda^2+k^2)^2\\
	& =g^{+}(\lambda)\cdot g^{-}(\lambda) ,
\emas
where
$$
 g^{\pm}(\lambda) = \lambda^3+(1 \pm i)\lambda^2+(k^2+ 1)\lambda+(1\pm i)k^2.
$$
			
First, we have the following asymptotic estimates for the eigenvalues of $\hat{A}_1$ corresponding to system \eqref{trans_fEM}.
			
\begin{lem} \label{fEM-eigen-expansion}	
\begin{enumerate} 
	\item[(1)]
		For $|k|\le \frac{\sqrt{3\gamma}}{2\gamma}$, the eigenvalues $\lambda_{\pm}$ of $\hat{A}_1$ satisfy
		\be \label{fem-eigen-low}
			\lambda_{\pm}=-\frac{1}{2} \pm i\frac{\sqrt{3}}{2}\left(1 +\frac{2\gamma}{3}k^2 + O(1)k^4 \right).
		\ee
	\item[(2)]
	For $|k|>\frac{\sqrt{3\gamma}}{2\gamma}$, the eigenvalues $\lambda_{\pm}$ of $\hat{A}_1$ satisfy
		\be \label{fem-eigen-high}
\lambda_{\pm}=-\frac{1}{2} \pm i\sqrt{\gamma}|k|\left(  1 + \frac{3}{8\gamma}k^{-2} + O(1)k^{-4} \right).
		\ee
	\item
		For $k \in \mathbb{R}$, the eigenvalues $\lambda_{\pm}$ of $\hat{A}_1$ satisfy
		\be \label{fem-eigen-mid}
			{\rm Re}\lambda_{\pm} = -\frac12.
		\ee
\end{enumerate}
\end{lem}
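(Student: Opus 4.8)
\emph{Proof strategy.} The plan is to read off all three statements from the closed form of the roots of the quadratic characteristic equation. By the convention used above, the eigenvalues $\lambda_\pm$ are the roots of $\det(\lambda\mathbb I_2+\hat A_1)=\lambda^2+\lambda+1+\gamma k^2=0$, which govern the decay of solutions of $\partial_t(\hat u_1,\hat E_1)^T=-\hat A_1(\hat u_1,\hat E_1)^T$. Solving this quadratic directly gives
\be \label{fem-eigen-closed}
  \lambda_\pm=-\frac12\pm\frac{i}{2}\sqrt{\,3+4\gamma k^2\,},
\ee
where the quantity under the radical is strictly positive for every $k\in\mathbb R$ because $\gamma=P'(1)>0$. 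In particular ${\rm Re}\,\lambda_\pm=-\tfrac12$ for all real $k$, which is part~(3); it also makes clear that parts~(1) and~(2) are nothing but Taylor expansions of the single function $k\mapsto\sqrt{3+4\gamma k^2}$ in two complementary ranges of $|k|$.

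For part~(1) I would factor out the low-frequency leading term, $\sqrt{3+4\gamma k^2}=\sqrt3\,\sqrt{1+\tfrac{4\gamma}{3}k^2}$, and apply the binomial series $\sqrt{1+x}=1+\tfrac12x-\tfrac18x^2+\cdots$ with $x=\tfrac{4\gamma}{3}k^2$. The hypothesis $|k|\le\tfrac{\sqrt{3\gamma}}{2\gamma}$ is precisely the statement $\tfrac{4\gamma}{3}k^2\le1$, so the series converges and its remainder after the linear term is $O(1)k^4$ with a constant uniform on this range; collecting terms gives $\sqrt{3+4\gamma k^2}=\sqrt3\bigl(1+\tfrac{2\gamma}{3}k^2+O(1)k^4\bigr)$, and multiplying by $\tfrac{i}{2}$ and inserting into \eqref{fem-eigen-closed} yields \eqref{fem-eigen-low}. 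For part~(2) I would instead factor out the high-frequency leading term, $\sqrt{3+4\gamma k^2}=2\sqrt\gamma\,|k|\,\sqrt{1+\tfrac{3}{4\gamma}k^{-2}}$, and expand in $x=\tfrac{3}{4\gamma}k^{-2}$; now the hypothesis $|k|>\tfrac{\sqrt{3\gamma}}{2\gamma}$ is exactly $x<1$, so the same binomial series applies and gives $\sqrt{3+4\gamma k^2}=2\sqrt\gamma\,|k|\bigl(1+\tfrac{3}{8\gamma}k^{-2}+O(1)k^{-4}\bigr)$, which is \eqref{fem-eigen-high} after inserting into \eqref{fem-eigen-closed}.

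Since every step is an elementary manipulation of the quadratic formula together with a convergent power series, there is no genuine obstacle. The only points that need a little care are: checking that the stated cutoff $|k|=\tfrac{\sqrt{3\gamma}}{2\gamma}$ is exactly the value at which the two expansion parameters $\tfrac{4\gamma}{3}k^2$ and $\tfrac{3}{4\gamma}k^{-2}$ reach $1$, so that the error terms are honestly $O(1)k^{\pm4}$ with constants uniform on each region; and bookkeeping of the factor $\tfrac12$ and the sign $\pm$ in passing from the radical back to $\lambda_\pm$.
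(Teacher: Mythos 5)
Your proposal is correct and follows essentially the same route as the paper: solve the quadratic $\lambda^2+\lambda+1+\gamma k^2=0$ to get $\lambda_\pm=-\tfrac12\pm\tfrac{i}{2}\sqrt{3+4\gamma k^2}$ (giving ${\rm Re}\,\lambda_\pm=-\tfrac12$ immediately), then factor out $\sqrt3$ for $|k|\le\tfrac{\sqrt{3\gamma}}{2\gamma}$ and $2\sqrt\gamma|k|$ for $|k|>\tfrac{\sqrt{3\gamma}}{2\gamma}$ and Taylor-expand the resulting square roots. Your extra remark that the cutoff is exactly where the two expansion parameters reach $1$ is a fine justification of the uniform $O(1)$ constants, which the paper leaves implicit.
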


\begin{proof}
According to
$$
	\det(\lambda\mathbb{I}_2+\hat{A}_1) =\lambda^2+ \lambda+ 1 +\gamma k^2,
$$
we can solve this equation as
\be  \lambda_{\pm} = \frac{-1 \pm i \sqrt{3 +4\gamma k^2}}{2}. \notag \ee
Then, it is easy to see that
$ {\rm Re}\lambda_{\pm} = -\frac12$ for any $k\in\mathbb{R}$.\\

For $|k|\le \frac{\sqrt{3\gamma}}{2\gamma},$ we take the form
$$
	\lambda_{\pm} = -\frac{1}{2} \pm i\frac{\sqrt{3}}{2}\sqrt{1+\frac{4\gamma}{3}k^2},
$$
and for $|k|>\frac{\sqrt{3\gamma}}{2\gamma}$, one has
$$
	\lambda_{\pm} = -\frac{1}{2} \pm i\sqrt{\gamma}|k|\sqrt{1+\frac{3}{4\gamma k^2}}.
$$
By using Taylor's expansion, we obtain the asymptotic estimate \eqref{fem-eigen-low} and \eqref{fem-eigen-high}.
\end{proof}

Next, we show some properties of the eigenvalues for $\hat{A}_r$ corresponding to system \eqref{trans_eEM}.
\begin{property} \label{property-nonzero}
There are six eigenvalues $\lambda_1, \lambda_2, \lambda_3, \lambda_4, \lambda_5 ,\lambda_6$ for $\hat{A}_r$. Specifically, the eigenvalues $\lambda_1, \lambda_2, \lambda_3$ are the roots of equation
\be
	g^+(\lambda) = \lambda^3 + (1+i)\lambda^2 + (k^2+ 1)\lambda +(1+i)k^2=0,\label{g1}
\ee
and the eigenvalues $\lambda_4, \lambda_5 ,\lambda_6$ are the roots of equation
\be
g^-(\lambda) = \lambda^3 + ( 1-i)\lambda^2 + (k^2+ 1)\lambda +( 1-i)k^2=0.\label{g2}
\ee
Moreover, these eigenvalues have the following properties
\begin{enumerate} 
	\item[(1)]
	$\lambda =0$ is the root of $g^{\pm}(\lambda)=0$ if and only if $k=0$.
	\item $\lambda_1,\lambda_2,\cdots,\lambda_6$ are all complex and ${ \lambda}_{j+3}$ is complex conjugate with $\lambda_j,~j=1,2,3.$ In particular, $\lambda_1=\lambda_4 =0$ for $k=0$.
	\item[(2)]
	$g^{+}(\lambda)=0$ as well as $g^{-}(\lambda)=0$ has no multiple roots.
\end{enumerate}
\end{property}

\begin{proof}
First, we prove the property $(1)$. Substituting $\lambda =0$ into the equation $g^{\pm}(\lambda)=0$, we obtain $(1\pm i)k^2 =0$, which yields $k=0$. On the other hand, when $k=0$, one has $g^{\pm}(\lambda) = \lambda[\lambda^2+(1\pm i)\lambda+1]=0$, which implies that $\lambda=0$ is a root of $g^{\pm}(\lambda)=0$.
	
Now, we want to show the property $(2)$. If $g^+(\lambda)=0$ has a real root $\lambda'$, then  $(\lambda'^2+k^2) = 0$, which leads to a contradiction except for $\lambda'=0$ when $k=0$. And we claim that the roots of $g^+(\lambda)=0$ are not conjugate to each other. Otherwise, we assume that $\lambda_2$ and $\lambda_3$ are conjugate.	For $k\ne 0$, since
\be \lambda_1\lambda_2 + \lambda_1\lambda_3 +\lambda_2\lambda_3 = 2\lambda_1{\rm Re}\lambda_2 + |\lambda_2|^2= k^2 +  1  ,\ee
it yields that $\lambda_1\ne 0$ is real root which leads a contradiction. For $k=0$, we have $\lambda_1=0$, then $\lambda_1+\lambda_2+\lambda_3 = \lambda_2+\lambda_3 = -(1+i)$,  which also leads a contradiction. Thus, the claim  holds.
Since $ g^+(\bar{\lambda})= \overline{g^-(\lambda)}$, we conclude that $\lambda_{j+3}$ must  be conjugate with $\lambda_j$, $j=1,2,3$.

For the third property, if $g^+(\lambda) = 0$ has a multiple root $\lambda_1$, then
\be \label{multi-1} (g^+)'(\lambda_1) = 3\lambda_1^2+2a_2\lambda_1 +a_1 =0, \ee
where $a_2=1+i,a_1=1+k^2$. This together with $g^+(\lambda_1) =\lambda_1^3 +a_2\lambda_1^2+a_1\lambda_1+a_2k^2 = 0$ yields
	\be \label{multi-2} a_2\lambda_1^2+2a_1\lambda_1+3a_2k^2=0. \ee
By \eqref{multi-1} and \eqref{multi-2}, we can solve $\lambda_1$ as
\be \lambda_1 =-\frac{(9k^2-a_1)a_2}{6a_1-2a_2^2}. \ee
Substituting above into \eqref{multi-1} and since $6a_1-2a_2^2 \neq 0$, we obtain
	\be 4a_2^4k^2-a_2^2a_1^2-18a_2^2k^2a_1 + 4a_1^3 + 27a_2^2k^4 =0.\ee
Notice that $a_2^2=2i$ and $a_2^4 = -4$, then for $k\in \mathbb{R}$, it yields
	$$
		0=a_1^3+a_2^4k^2=(k^2+1)^3-4k^2 =k^6 + 3\left(k^2-\frac{1}{6}\right)^2+\frac{33}{36} >0,
	$$
	which leads a contradiction. The argument for $g^{-}(\lambda)=0$ is similar. Therefore, we done with the proof.
\end{proof}

Next, we give the expansions of the eigenvalues of $\hat{A}_r$ by implicit function theorem.
\begin{lem} \label{eEM-eigen-expansion}
\begin{enumerate}
	\item[(1)]
	For $|k| < \epsilon$ sufficiently small, the eigenvalues $\lambda_j$ $(j=1,2,\cdots,6)$ of $\hat{A}_r$ are analytic functions of $k$ and have the following expansions
	\bma \label{eEM-low-expan}
		&\lambda_1 = \bar{\lambda}_4= -k^2 -ik^2 
		+ O(1)k^4,\\
		&\lambda_2 = \bar{\lambda}_5 = -\frac{1+i}{2} + \frac{\sqrt{2i-4}}{2}+\left(\frac{1+i}{2}+\frac{\sqrt{2i-1}}{\sqrt{5}}\right)k^2 +O(1)k^4,\\
		&\lambda_3 = \bar{\lambda}_6 = -\frac{1+i}{2} - \frac{\sqrt{2i-4}}{2}+\left(\frac{1+i}{2}-\frac{\sqrt{2i-1}}{\sqrt{5}}\right)k^2 +O(1)k^4.
	\ema
	\item[(2)]
	For $|k| > R$ sufficiently large, the eigenvalues $\lambda_j$ $(j=1,2,\cdots,6)$  of $\hat{A}_r$  are analytic function of $k$ and have the following expansions
	\bma \label{eEM-high-expan}
		&\lambda_1 = \bar\lambda_4 =-(1+i) +(1+i)k^{-2} + O(1)k^{-3},\\
		&\lambda_2 = \bar\lambda_5 = ik + \frac{i}{2}k^{-1} - \frac{1+i}{2}k^{-2} +O(1)k^{-3},\\
		&\lambda_3 = \bar\lambda_6 =-ik - \frac{i}{2}k^{-1} - \frac{1+i}{2}k^{-2} +O(1)k^{-3}.
	\ema
	\item[(3)]
	For $\epsilon \le |k| \le R$, the eigenvalues $\lambda_{j}$ $(j=1,2,\cdots,6)$ of $\hat{A}_r$  satisfy
	\be {\rm Re}\lambda_j < -c,\ee
where $c>0$ is a constant.
\end{enumerate}
\end{lem}
			
\begin{proof}
Let $r=k^2$, we have
$$
	g^{+}(r,\lambda) = \lambda^3 + (1+i)\lambda^2 + (r+1)\lambda +(1+i)r.
$$
Then the equation
$g^{+}(0,\lambda) = \lambda^3 + (1+i)\lambda^2 + \lambda=0$
has three roots $z_j$ $(j=1,2,3)$ with
$$z_1=0,\quad z_{2}=-\frac{1+i}2+\frac{\sqrt{2i-4}}2,\quad z_3=-\frac{1+i}2-\frac{\sqrt{2i-4}}2.$$
We notice that
\be
g^{+}_{\lambda}(0,0)= 1 \neq 0;  \quad g^{+}_{\lambda}(0,z_2)=(z_2-z_3)z_{2} \neq 0; \quad g^{+}_{\lambda}(0,z_3)= (z_3-z_2)z_{3} \neq 0. \label{lowlam3}
\ee
By the implicit function theorem (\cite{Kaup}, Theorem 8.6), there exist two small constants $\epsilon >0$ and $r_1>0$, for   $r\in D_1:=\{ r\in\mathbb{C}: |r|\le \epsilon\}$, the equation $g^+(r,\lambda)=0$ has a unique solution $\lambda_j(r)$ with $|\lambda_j-z_j|\le r_1$ for each $j=1,2,3.$ And since $g^+(r,\lambda): \mathbb{C} \times \mathbb{C} \to \mathbb{C}$ is analytic, it follows that $\lambda_j(r)$ is an analytic function with respect to $r\in D_1.$ In particular,
\bmas
	&\lambda_1(0) =0, \quad \lambda_1'(0) = -\frac{g^+_r(0,0)}{g^+_{\lambda}(0,0)}  =-(1+i),
\\
	&\lambda_2(0) =z_2, \quad \lambda_2'(0) = -\frac{g^+_r(0,z_2)}{g^+_{\lambda}(0,z_2)}  =\frac{z_3}{(z_2-z_3)z_{2}}=\frac{1+i}{2}+\frac{\sqrt{2i-1}}{\sqrt{5}},\\
	&\lambda_3(0) =z_3, \quad \lambda_3'(0) = -\frac{g^+_r(0,z_3)}{g^+_{\lambda}(0,z_3)}  =\frac{z_2}{(z_3-z_2)z_{3}}=\frac{1+i}{2}-\frac{\sqrt{2i-1}}{\sqrt{5}}.
\emas
Since
$$ \sqrt{2i-4}=(\sqrt5-2)^{\frac12}+i(\sqrt5+2)^{\frac12},$$
it follows that
$$ {\rm Re}\lambda_2(0)=-\frac12(1-(\sqrt5-2)^{\frac12}) <-\frac14. $$
				
Next, we rewrite $g^{+}(\lambda)$ as
$$
	g^{+}(\lambda) = k^3\bigg[ \left(\frac{\lambda}{k}\right)^3 + \frac{1+i}{k}\left(\frac{\lambda}{k}\right)^2+ \left(1+\frac{1}{k^2}\right)\frac{\lambda}{k} +\frac{1+i}{k}\bigg],
$$
Let $s=k^{-1}$ and $\beta=\lambda k^{-1}$, we define an analytic mapping $F: \mathbb{C} \times \mathbb{C} \to \mathbb{C}$ as follows
$$
	F(s,\beta) = \beta^3 + (1+i)s\beta^2 + (1+s^2)\beta +( 1+i)s.
$$
Then the equation
$F(0,\beta) = \beta^3 +\beta =0$
has three roots $\beta=y_j$ with
$$y_1=0,\quad y_2=i,\quad y_3=-i.$$
We have
$$
	F(0,0)=F(0,\pm i)=0,\quad F_{\beta}(0,0)=1, \quad F_{\beta}(0,\pm i)=-2.
$$
Therefore, there exists a large constant $R >0$, for each given $s\in D_2:=\{ s\in\mathbb{C}: |s|\le \frac{1}{R}\}$, the equation $F(s,\beta)=0$ has a unique analytic solution $\beta_j(s)$ with $|\beta_j-y_j|\le r_1$ satisfying $\beta_j(0)=y_j$, for each $j=1,2,3.$  Note that
\bmas
	&\beta'_j(0) =-\frac{1}{F_{\beta}(0,y_j)}F_{s}(0,y_j),\\
	&\beta''_j(0) =-\frac{1}{F_{\beta}(0,y_j)} \left( F_{ss}(0,y_j) + 2F_{\beta s}(0,y_j)\beta'_j(0) + F_{\beta\beta}(0,y_j)\beta'_j(0)^2\right) ,\\
	&\beta'''_j(0) = -\frac{1}{F_{\beta}(0,y_j)}  \big[ F_{sss}(0,y_j) + 3\left(F_{\beta ss}(0,y_j)+F_{\beta\beta s}(0,y_j) \beta'(0)\right)\beta'_j(0) \\
&\qquad\qquad+ 3\left(F_{\beta s}(0,y_j)+F_{\beta\beta}(0,y_j)\beta'_j(0)\right)\beta''_j(0)+F_{\beta\beta\beta}(0,y_j)\beta'_j(0)^3 \big],
\emas
and
\bmas
	&F_{s}=(1+i)\beta ^2+2 s\beta  +  1+i, \quad F_{\beta } =3\beta ^2 +2(1+i)s\beta +1+ s^2,\\
	&F_{\beta s} = 2\beta (1+i)+2 s, \quad F_{ss} = 2 \beta ,\quad F_{\beta \beta } =6\beta +2(1+i)s, \\
	&F_{\beta \beta s} = 2(1+i),\quad F_{\beta ss} =2 , \quad F_{sss} = 0, \quad F_{\beta \beta \beta }=6.
\emas
Thus, we obtain
\bmas
	&\beta _1(0) =0,\quad \beta _1'(0) = -(1+i),\quad \beta _1''(0) =0,\quad \beta _1'''(0) = 6(1+i);\\
	&\beta _2(0) =i,\quad \beta _2'(0) = 0,\quad \beta _2''(0) =i,\quad \beta _2'''(0) = -3(1+i);\\
	&\beta _3(0) =-i,\quad \beta _3'(0) = 0,\quad \beta _3''(0) =-i,\quad \beta _3'''(0) = -3(1+i).
\emas
Therefore,
$$
	\lambda_j(k) =\beta _jk =\beta _j(0)k +\beta _j'(0) +\frac12\beta _j''(0)k^{-1} +\frac16\beta _j'''(0)k^{-2} +O(1)k^{-3}.
$$
				
For $\epsilon \le |k| \le R$, suppose that ${\rm Re}\lambda_j < 0$ is not true, then there exists at least one $|k|\in[\epsilon,R]$ such that $\lambda(k)=id(k)$ with $d(k)\in \mathbb{R}.$ Substituting it into $g^+(\lambda)$ and dividing into real part and imaginary part, we have
$$
\left\{\bal
	d^2-k^2=0,\\
	d^3 + (d^2-k^2)- (k^2+ 1)d=0.
\ea\right.
$$
The first equation leads to $d=\pm k$, which is substituted into the second equation to obtain $k=0$. This contradicts the range of $k$. Hence, ${\rm Re}\lambda_j <-c$ for $c>0$.
\end{proof}

\subsection{Decay estimate of Green's Function }
Before giving the Green's function, we consider an initial value problem of   linear ordinary differential equations with constant coefficients:
\be \label{ode} y'=Ay; \quad y(t)\big|_{t=0}=y(0), \ee
where $y\in \C^n$ is a vector function and $A$ is a $n\times n$ matrix.
			
\begin{definition}
	\begin{enumerate}
		\item[(1)] 	A nonzero vector $x$ is called the right eigenvector of a Matrix $A$ associated with an eigenvalue $\lambda$ of $A$ if $Ax=\lambda x$.
		\item[(2)] 	 A nonzero vector $y$ is called the left eigenvector of a Matrix $A$ associated with an eigenvalue $\lambda$ of $A$ if $\bar{y}^{T}A=\lambda \bar{y}^{T}$, where $\bar{y}$ means conjugate with $y$ and the superscript $``~T~"$ denotes the transposed.
	\end{enumerate}	
\end{definition}
			
The following lemma gives an expression of the solution for \eqref{ode}.
\begin{lem} \label{etA}
	If Matirx $A$ has $n$ distinct linear independent right eigenvectors $v_1,~v_2,\cdots,~v_n$ and $n$ distinct linear independent left eigenvectors $v_1^*,~v_2^*,\cdots,~v_n^*$, their corresponding eigenvalues are $\lambda_1,~\lambda_2,\cdots,~\lambda_n$, respectively. Assume that $(\bar{v}_i^*)^{{T}}v_i \neq 0$, then, the system \eqref{ode} has solution $y(t) = e^{tA}y(0),$ and $e^{tA}$ has the following expression:
\be
	e^{tA} = \sum_{i=1}^n e^{\lambda_it} \frac{v_i \otimes \bar{v}_i^*}{(\bar{v}_i^*)^{{T}}v_i}. \label{ode1}
\ee
\end{lem}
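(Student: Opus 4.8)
The plan is to verify directly that the matrix-valued function $t \mapsto \sum_{i=1}^n e^{\lambda_i t}\,\dfrac{v_i \otimes \bar v_i^*}{(\bar v_i^*)^{T} v_i}$ both satisfies the ODE $\Phi'(t) = A\Phi(t)$ and reduces to the identity at $t=0$; by uniqueness of solutions to linear ODEs with constant coefficients this identifies it with $e^{tA}$, and then $y(t) = e^{tA}y(0)$ solves \eqref{ode}. The key algebraic input I would set up first is a biorthogonality relation: for $i \neq j$, one has $(\bar v_j^*)^{T} v_i = 0$. This follows because $\lambda_j (\bar v_j^*)^{T} v_i = (\bar v_j^*)^{T} A v_i = \lambda_i (\bar v_j^*)^{T} v_i$, using $\bar{v}_j^{*T} A = \lambda_j \bar v_j^{*T}$ on the left and $Av_i = \lambda_i v_i$ on the right, so $(\lambda_i - \lambda_j)(\bar v_j^*)^{T} v_i = 0$; since the eigenvalues are distinct, the claim follows. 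Combined with the hypothesis $(\bar v_i^*)^{T} v_i \neq 0$, this means the rescaled vectors $w_i := v_i / (\bar v_i^*)^{T} v_i$ and $v_i^*$ form a dual pair: $(\bar v_j^*)^{T} w_i = \delta_{ij}$.

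Next I would use the fact that $\{v_1,\dots,v_n\}$ is a basis of $\C^n$ (they are $n$ linearly independent vectors) to expand the initial datum: write $y(0) = \sum_{i=1}^n c_i v_i$. Pairing with $\bar v_j^*$ and using biorthogonality gives $c_j = (\bar v_j^*)^{T} y(0) / (\bar v_j^*)^{T} v_j$, i.e. $y(0) = \sum_i \dfrac{(\bar v_i^*)^{T} y(0)}{(\bar v_i^*)^{T} v_i}\, v_i$, which is precisely $\big(\sum_i \frac{v_i \otimes \bar v_i^*}{(\bar v_i^*)^{T} v_i}\big) y(0)$ — so $\Phi(0) = \mathbb{I}_n$. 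For the ODE, differentiating termwise gives $\Phi'(t) = \sum_i \lambda_i e^{\lambda_i t} \frac{v_i \otimes \bar v_i^*}{(\bar v_i^*)^{T} v_i}$, while $A\Phi(t) = \sum_i e^{\lambda_i t} \frac{(A v_i)\otimes \bar v_i^*}{(\bar v_i^*)^{T} v_i} = \sum_i \lambda_i e^{\lambda_i t}\frac{v_i \otimes \bar v_i^*}{(\bar v_i^*)^{T} v_i}$, using $Av_i = \lambda_i v_i$; the two agree. Hence $\Phi(t)$ solves the matrix ODE with $\Phi(0)=\mathbb{I}_n$, so $\Phi(t) = e^{tA}$ and $y(t) = \Phi(t) y(0) = e^{tA} y(0)$ solves \eqref{ode}.

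I do not anticipate a serious obstacle here; the lemma is essentially the spectral decomposition of a diagonalizable matrix rewritten in terms of left/right eigenvectors, and the only point requiring care is the biorthogonality step — in particular making sure the conjugate-transpose conventions in the definition of left eigenvector ($\bar y^{T} A = \lambda \bar y^{T}$) are threaded consistently through the computation $\lambda_j (\bar v_j^*)^{T} v_i = \lambda_i (\bar v_j^*)^{T} v_i$. A secondary minor point is to note that the hypothesis "$n$ distinct linearly independent right eigenvectors" together with distinctness of the $\lambda_i$ guarantees $A$ is diagonalizable, so that $\{v_i\}$ is genuinely a basis and the expansion of $y(0)$ is legitimate; one should also remark that $\sum_i \frac{v_i\otimes\bar v_i^*}{(\bar v_i^*)^Tv_i}$ being the identity is equivalent to the completeness of this eigenbasis, which is why the representation \eqref{ode1} holds for all $t$ and not just as an ansatz.
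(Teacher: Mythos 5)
Your proposal is correct and rests on essentially the same ingredients as the paper's proof: the biorthogonality computation $(\lambda_i-\lambda_j)(\bar v_j^*)^{T}v_i=0$ for $i\neq j$ and the expansion of $y(0)$ in the basis of right eigenvectors with coefficients $(\bar v_i^*)^{T}y(0)/(\bar v_i^*)^{T}v_i$ are exactly the paper's steps. The only difference is one of packaging: the paper starts from the general solution form $y(t)=\sum_i a_ie^{\lambda_i t}v_i$ and solves for the coefficients, whereas you verify directly that the candidate matrix satisfies $\Phi'(t)=A\Phi(t)$ with $\Phi(0)=\mathbb{I}_n$ and then invoke uniqueness, which is a minor, equally valid rearrangement of the same argument.
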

\begin{proof}
By the knowledge of ODE, 
the every solution $y$ of $y'=Ay$ has the form
$$y(t) = \sum_{i=1}^n e^{\lambda_it}a_iv_i,$$
with $a_i$ are constants. For $t=0$, we have $y(0) = \sum_{i=1}^n a_i v_i$. After taking inner product with $v_i^*$, we get
$$
	(\bar{v}_i^*)^{{T}}y(0) = a_i (\bar{v}_i^*)^{{T}}v_i.
$$
In fact, for $i\neq j$,
$$(\bar{v}_j^*)^{{T}}\lambda_iv_i = (\bar{v}_j^*)^{{T}}Av_i = \lambda_j(\bar{v}_j^*)^{{T}}v_i.$$
And then, we have $(\lambda_i-\lambda_j)(\bar{v}_j^*)^{{T}}v_i = 0.$ Since $\lambda_i \neq \lambda_j$, we obtain that $(\bar{v}_j^*)^{{T}}v_i = 0.$
Thus,
$$
	y(t) = \sum_{i=1}^n e^{\lambda_it}\frac{v_i(\bar{v}_i^*)^{{T}}y(0)}{(\bar{v}_i^*)^{{T}}v_i}.
$$
Thus, we obtain \eqref{ode1}.
\end{proof}
		
\begin{lem} \label{fEM-eigenvector}
\begin{enumerate} 
	\item[(1)] The vectors $\psi_+,~\psi_+^*$ are right and left eigenvectors of $\hat{A}_1$ associated with $\lambda_+$, respectively,
	\bma \label{lambda+_eigenvector}
	&\psi_+ = \left(\lambda_+,1\right)^{{T}}, \quad \bar{\psi}_+^* = \left(1,-\lambda_-\right)^{{T}}.
	\ema
	\item[(2)] The vectors $\psi_-,~\psi_-^*$ are right and left eigenvectors of $\hat{A}_1$ associated with $\lambda_-$, respectively,
	\bma \label{lambda-_eigenvector}
	&\psi_- = \left(\lambda_-,1\right)^{{T}}, \quad \bar{\psi}_-^* = \left(1,-\lambda_+\right)^{{T}}.
	\ema
\end{enumerate}
\end{lem}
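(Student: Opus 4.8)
The plan is to verify the claim by direct substitution, since it is a purely algebraic statement about the $2\times 2$ matrix $\hat A_1$. The only inputs I need are the relations $\lambda_+ + \lambda_- = -1$ and $\lambda_+\lambda_- = 1+\gamma k^2$ (Vieta's formulas for the quadratic $\lambda^2 + \lambda + 1 + \gamma k^2$, whose roots $\lambda_\pm$ are computed in Lemma \ref{fEM-eigen-expansion}), and hence $\lambda_\pm^2 = -\lambda_\pm - (1+\gamma k^2)$. At the outset I would also note $\lambda_+ \neq \lambda_-$, since $\lambda_+ - \lambda_- = i\sqrt{3+4\gamma k^2} \neq 0$ for every $k\in\mathbb{R}$ (recall $\gamma = P'(1) > 0$); this distinctness is what makes the eigenspaces one-dimensional and is needed when Lemma \ref{etA} is invoked afterwards.

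For the right eigenvectors I would compute $\hat A_1\psi_+ = (\lambda_+ + 1 + \gamma k^2,\ -\lambda_+)^T$ and substitute $1+\gamma k^2 = -\lambda_+ - \lambda_+^2$ into the first entry, turning it into $-\lambda_+^2$; thus $\hat A_1\psi_+ = -\lambda_+\psi_+$, equivalently $(-\hat A_1)\psi_+ = \lambda_+\psi_+$. Replacing $\lambda_+$ by $\lambda_-$ throughout gives the statement for $\psi_-$. For the left eigenvectors I would form the row product $(\bar\psi_+^*)^T\hat A_1 = (1,\,-\lambda_-)\hat A_1 = (1+\lambda_-,\ 1+\gamma k^2)$ and then use $1+\lambda_- = -\lambda_+$ together with $1+\gamma k^2 = \lambda_+\lambda_-$ to rewrite this as $-\lambda_+(1,\,-\lambda_-) = -\lambda_+(\bar\psi_+^*)^T$, i.e. $(\bar\psi_+^*)^T(-\hat A_1) = \lambda_+(\bar\psi_+^*)^T$; the vector $\psi_-^*$ is handled symmetrically.

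I do not expect a real obstacle — the whole verification is a handful of lines of arithmetic. The one thing that deserves attention is the sign convention: the matrix governing the Fourier-transformed system \eqref{trans_fEM} is $-\hat A_1$, whose eigenvalues are exactly the $\lambda_\pm$ of Lemma \ref{fEM-eigen-expansion}, so the identities above are to be read as ``$\psi_\pm$ (resp.\ $\psi_\pm^*$) is a right (resp.\ left) eigenvector of $-\hat A_1$ for the eigenvalue $\lambda_\pm$'', which is precisely what pairs each eigenvector with the exponential mode $e^{\lambda_\pm t}$ when the Green's function $\hat G_f = e^{-t\hat A_1}$ is built from the spectral formula \eqref{ode1}. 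Finally, to make the upcoming application of Lemma \ref{etA} legitimate I would record the non-degeneracy: $(\bar\psi_+^*)^T\psi_+ = \lambda_+ - \lambda_- = i\sqrt{3+4\gamma k^2}$ and $(\bar\psi_-^*)^T\psi_- = \lambda_- - \lambda_+$, both nonzero for all $k\in\mathbb{R}$.
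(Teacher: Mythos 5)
Your proof is correct and takes essentially the same route as the paper: both verify the claim by direct substitution using the Vieta identities $\lambda_++\lambda_-=-1$ and $\lambda_+\lambda_-=1+\gamma k^2$, the paper writing the computation as $(\lambda_\pm\mathbb{I}_2+\hat{A}_1)\psi_\pm=0$ and $(\lambda_\pm\mathbb{I}_2+\hat{A}_1^{T})\bar{\psi}_\pm^*=0$, which is exactly your ``eigenvector of $-\hat{A}_1$'' sign convention. Your additional remarks that $\lambda_+\neq\lambda_-$ and that $(\bar{\psi}_\pm^*)^{T}\psi_\pm=\pm(\lambda_+-\lambda_-)\neq 0$ are not spelled out in the paper's proof of this lemma but are indeed what legitimizes the later use of Lemma \ref{etA} in Proposition \ref{f-Green}.
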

\begin{proof}
Since $\lambda_+ +\lambda_- =-1 $ and $\lambda_+ \lambda_- = 1+\gamma k^2$, we have
\bmas
	(\lambda_+\mathbb{I}_2 + \hat{A}_1)\psi_+=
	\left( \ba \lambda_+ + 1  &  1+\gamma k^2\\
	-1  & \lambda_+ \ea \right)
	\left( \ba \psi_+^1 \\ \psi_+^2 \ea\right) =
	\left( \ba -\lambda_- & \lambda_+\lambda_-\\
	-1 & \lambda_+ \ea \right)
	\left( \ba \psi_+^1 \\ \psi_+^2 \ea\right) =0,
\emas
and
\bmas
	(\lambda_+\mathbb{I}_2 + \hat{A}_1^{T})\bar{\psi}_+^*=
	\left( \ba \lambda_+ + 1  & -1 \\
	1 +\gamma k^2 & \lambda_+ \ea \right)
	\left( \ba \bar{\psi}_+^{1*} \\ \bar{\psi}_+^{2*} \ea\right) =
	\left( \ba -\lambda_- & -1 \\
	\lambda_+\lambda_- & \lambda_+ \ea \right)
	\left( \ba \bar{\psi}_+^{1*} \\ \bar{\psi}_+^{2*} \ea\right) =0.
\emas
Thus, by taking $\psi_+^2 =1$, we obtain a right eigenvector corresponding to eigenvalue $\lambda_+$
\bmas \psi_+ = \left(\lambda_+,1\right)^{{T}}, \emas
and by taking $\bar{\psi}_+^{1*}=1$, we obtain a left eigenvector
\bmas
	\bar{\psi}_+^* = \left(1,-\lambda_-\right)^{{T}}	.
\emas
Similarly, we can obtain \eqref{lambda-_eigenvector} from $(\lambda_-\mathbb{I}_2 + \hat{A}_1)\psi_-=0$ and $(\lambda_-\mathbb{I}_2 + \hat{A}_1^{T})\bar{\psi}_-^*=0$. Then, we done with this proof.
\end{proof}
			
\begin{lem} \label{eEM-eigenvector}
\begin{enumerate} 
	\item[(1)]
		For $j=1,2,3$, let $\psi_j,~\psi_j^*$ are right and left eigenvectors of $\hat{A}_r$ associated with $\lambda_j$ respectively. Denote $\Lambda_j=\sum\limits_{n=1 \atop n\neq j}^3 \lambda_n$, $e=(1,i)$ and $\bar{e}=(1,-i)$, we have
		\be
			\psi_j = \left( \frac{ 1}{\Lambda_j}e, ~e , ~-\frac{k}{\lambda_j}e \right)^{{T}},  \quad
			\bar{\psi}_j^* = \left( -\frac{ 1}{\Lambda_j}\bar{e} , ~\bar{e} ,  ~\frac{k}{\lambda_j}\bar{e} \right)^{{T}}. \label{lambda-j-eigenvector}
		\ee
	\item[(2)]
		For $l=4,5,6,$ let $\psi_l,~\psi_l^*$ are right and left eigenvectors of $\hat{A}_r$ associated with $\lambda_l$ respectively. Denote $\Omega_l=\sum\limits_{m=4 \atop m\neq l}^6 \lambda_m$, $e=(1,i)$ and $\bar{e}=(1,-i)$, we have
		\be
			\psi_l = \left( \frac{ 1}{\Omega_l}\bar{e}, ~\bar{e} , ~\frac{k}{\lambda_l}\bar{e} \right)^{{T}}, \quad
			\bar{\psi}_l^* = \left( -\frac{ 1}{\Omega_l}e, ~e , ~-\frac{k}{\lambda_l}e \right)^{{T}}. \label{lambda-l-eigenvector}
		\ee
\end{enumerate}
\end{lem}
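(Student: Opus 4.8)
The plan is to verify the two displayed formulas \eqref{lambda-j-eigenvector} and \eqref{lambda-l-eigenvector} by direct substitution, after first recording the one structural fact that makes the $6\times6$ problem manageable by hand: the $2\times2$ rotation block is diagonalised by $e=(1,i)$ and $\bar e=(1,-i)$, namely $\mathbb{O}_1 e=-ie$, $\mathbb{O}_1\bar e=i\bar e$, and $\mathbb{O}_1^{T}=-\mathbb{O}_1$. Hence any $6$-vector whose three consecutive $2$-blocks are all scalar multiples of the same vector ($e$, or $\bar e$) is sent by $\hat{A}_r$, and also by $\hat{A}_r^{T}$, to a vector of the same shape; this is precisely what reduces each eigenvector equation to three scalar relations among the block coefficients. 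I will read ``$\psi_j,\psi_j^{*}$ are right/left eigenvectors of $\hat{A}_r$ associated with $\lambda_j$'' in the sense used for $\hat{A}_1$ in Lemma~\ref{fEM-eigenvector}, i.e.\ $(\lambda_j\mathbb{I}_6+\hat{A}_r)\psi_j=0$ and $(\lambda_j\mathbb{I}_6+\hat{A}_r^{T})\bar\psi_j^{*}=0$, which is the form required by Lemma~\ref{etA} applied to $-\hat{A}_r$ (whose eigenvalues are, by the discussion preceding Property~\ref{property-nonzero}, exactly the roots $\lambda_1,\dots,\lambda_6$ of $g^{+}g^{-}$).

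First, for $j=1,2,3$ (the roots of $g^{+}$) I would plug $\psi_j=(a\,e,\,b\,e,\,c\,e)^{T}$ into $(\lambda_j\mathbb{I}_6+\hat{A}_r)\psi_j=0$ and use $\mathbb{O}_1 e=-ie$; this collapses to the three scalar equations $(\lambda_j+1+i)a+b=0$, $\;-a+\lambda_j b-kc=0$, $\;kb+\lambda_j c=0$. Normalising $b=1$, the last equation gives $c=-k/\lambda_j$ (legitimate because $\lambda_j\neq0$ whenever $k\neq0$, by Property~\ref{property-nonzero}), and the first gives $a=-1/(\lambda_j+1+i)$; here I would invoke the Vieta identity $\lambda_1+\lambda_2+\lambda_3=-(1+i)$ for \eqref{g1}, which says $\lambda_j+1+i=-\Lambda_j$, to rewrite this as $a=1/\Lambda_j$ (the denominator being nonzero since $g^{+}(-(1+i))=-(1+i)\neq0$). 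The second equation is then just the compatibility condition of the system, and substituting the values of $a,c$ found above it becomes $\lambda_j^{3}+(1+i)\lambda_j^{2}+(k^{2}+1)\lambda_j+(1+i)k^{2}=g^{+}(\lambda_j)=0$, which holds by hypothesis; this produces the right eigenvector in \eqref{lambda-j-eigenvector}. The left eigenvector is obtained identically from $(\lambda_j\mathbb{I}_6+\hat{A}_r^{T})\bar\psi_j^{*}=0$ with $\bar\psi_j^{*}=(a^{*}\bar e,\,b^{*}\bar e,\,c^{*}\bar e)^{T}$, now using $\mathbb{O}_1\bar e=i\bar e$ and $\mathbb{O}_1^{T}=-\mathbb{O}_1$: after normalising $b^{*}=1$ one finds $c^{*}=k/\lambda_j$ and $a^{*}=1/(\lambda_j+1+i)=-1/\Lambda_j$, with compatibility again equivalent to $g^{+}(\lambda_j)=0$.

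For $l=4,5,6$ (the roots of $g^{-}$) there are two equivalent routes. The direct one is to repeat the previous computation with $e$ replaced throughout by $\bar e$ (so that $\mathbb{O}_1\bar e=i\bar e$ is used), $g^{+}$ by $g^{-}$, and the Vieta identity $\lambda_4+\lambda_5+\lambda_6=-(1-i)$, which turns $\lambda_l+1-i$ into $-\Omega_l$ and yields \eqref{lambda-l-eigenvector} verbatim. The slicker one exploits the conjugation symmetry $\overline{\hat{A}_r}=D\hat{A}_r D^{-1}$ with $D=\mathrm{diag}(\mathbb{I}_2,\mathbb{I}_2,-\mathbb{I}_2)$, valid for real $k$ since under entrywise conjugation only the two $\pm ik\mathbb{O}_1$ blocks change, and exactly by sign: conjugating $(\lambda_j\mathbb{I}_6+\hat{A}_r)\psi_j=0$ and left-multiplying by $D^{-1}$ gives $(\bar\lambda_j\mathbb{I}_6+\hat{A}_r)(D^{-1}\bar\psi_j)=0$, and since $\lambda_{j+3}=\bar\lambda_j$ (Property~\ref{property-nonzero}), $\overline{\Lambda_j}=\Omega_{j+3}$, and $\overline{e}=\bar e$, the vector $D^{-1}\bar\psi_j$ is precisely the $\psi_{j+3}$ of \eqref{lambda-l-eigenvector}; the left eigenvectors transform in the same way.

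The block manipulations are mechanical once the $e/\bar e$ ansatz is fixed, so the only steps needing genuine care --- and the place I expect the proof to feel least automatic --- are two: keeping the factors $\pm i$ and $\pm ik$ straight across the three block rows of $\hat{A}_r$ and $\hat{A}_r^{T}$, and recognising that the compatibility equation of the reduced $3\times3$ system is nothing other than the defining cubic $g^{\pm}(\lambda_j)=0$, after which the elementary symmetric-function identities for \eqref{g1}--\eqref{g2} are what collapse the unwieldy coefficients $-1/(\lambda_j+1+i)$ etc.\ into the stated $1/\Lambda_j$, $1/\Omega_l$. One should also note in passing the nonvanishing of all the denominators $\lambda_j,\lambda_l,\Lambda_j,\Omega_l$, which is immediate from Property~\ref{property-nonzero} together with the one-line evaluations $g^{\pm}(-(1\pm i))\neq0$.
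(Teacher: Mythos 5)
Your proposal is correct and follows essentially the same route as the paper: reduce the $6\times6$ eigen-equations to scalar relations on the blocks via $\mathbb{O}_1e=-ie$, $\mathbb{O}_1\bar e=i\bar e$, then use the Vieta identity $\lambda_1+\lambda_2+\lambda_3=-(1+i)$ (resp.\ $-(1-i)$) together with $g^{\pm}(\lambda_j)=0$ to collapse the coefficients to $1/\Lambda_j$, $-k/\lambda_j$, etc.; the paper merely orders the steps differently (solving the second and third block rows first and deducing $\mathbb{O}_1b_j=-ib_j$ from the first, rather than positing the $e$-ansatz). Your conjugation shortcut $\overline{\hat{A}_r}=D\hat{A}_rD^{-1}$ for $l=4,5,6$ and the explicit nonvanishing checks for $\lambda_j,\Lambda_j,\Omega_l$ are correct refinements of the paper's ``by the same argument,'' not a different method.
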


\begin{proof}
First, we compute the right eigenvector of $\hat{A}_r$ associate with $\lambda_j,j=1,2,3$. Denote that
$\psi_j=(a_j,b_j,c_j)^T$, where $a_j,b_j,c_j \in \mathbb{C}^2,$ and we have
$$
	(\lambda_j\mathbb{I}_6 + \hat{A}_r)\psi_j =
	\begin{pmatrix}
		(\lambda_j+ 1)\mathbb{I}_2-\mathbb{O}_1  & \mathbb{I}_2 & 0  \\
		-\mathbb{I}_2 & \lambda_j\mathbb{I}_2 & -ik\mathbb{O}_1 \\
		0 & ik\mathbb{O}_1 & \lambda_j\mathbb{I}_2	\end{pmatrix}
	\begin{pmatrix}
		a_j \\ b_j \\ c_j
	\end{pmatrix} = \left( \ba 0 \\ 0 \\ 0 \ea \right).
$$
From the second and third rows, we obtian
\bma
&c_j = -\frac{ik}{\lambda_j}\mathbb{O}_1b_j,\\
&a_j=\lambda_jb_j-ik\mathbb{O}_1c_j =\left(\lambda_j+\frac{k^2}{\lambda_j}\right)b_j. \label{a_j}
\ema
For $j=1$, we calculate that
\bmas
	\lambda_1+\frac{k^2}{\lambda_1} &= \frac{(\lambda_1^2+ k^2)(\lambda_2+\lambda_3)  }{\lambda_1(\lambda_2+\lambda_3)} =-\frac{(\lambda_1^2+ k^2) (\lambda_1+1+i)}{\lambda_1(\lambda_2+\lambda_3)}\\
	&=-\frac{\lambda_1^3 +(1+i)\lambda_1^2 +k^2\lambda_1 +(1+i)k^2}{\lambda_1(\lambda_2+\lambda_3)} = \frac{1}{\lambda_2+\lambda_3}.
\emas
where we had used
$	\lambda_1+\lambda_2 + \lambda_3 = -( 1+i) $ and 
$g^+(\lambda_1)=\lambda_1^3 + (1+i)\lambda_1^2 + (k^2+ 1)\lambda_1 +(1+i)k^2=0.$

Similarly, we  have
$$
	\lambda_2+\frac{k^2}{\lambda_2} = \frac{1}{\lambda_1+\lambda_3}, \quad \lambda_3+\frac{k^2}{\lambda_3} = \frac{1}{\lambda_1+\lambda_2}.
$$
Then, \eqref{a_j} becomes
$$
	a_j = \frac{1}{\Lambda_j}b_j,
$$
where $\Lambda_j = \sum\limits_{n=1 \atop n \neq j}^3\lambda_n$ for $j=1,2,3$.
Next, from the first row, we have
$$
	0= (\lambda_j+1)a_j-\mathbb{O}_1a_j+b_j=(\lambda_j+1)\left(\lambda_j+\frac{k^2}{\lambda_j}\right)b_j - \left(\lambda_j+\frac{k^2}{\lambda_j}\right)\mathbb{O}_1b_j + b_j.
$$
Multiplying it by $\Lambda_j$, it follows
$$
	 \mathbb{O}_1b_j=(\lambda_j+1+\Lambda_j)b_j= -ib_j.
$$
Therefore, we take
$$
	b_j=(1,i) =:e,
$$
and obtain the following right eigenvector of $\hat{A}_r$ associate with $\lambda_j$:
$$
	\psi_j = \left(  \frac{1}{\Lambda_j}e, ~e,  ~-\frac{k}{\lambda_j}e \right)^{T}.
$$
			
Denote the left eigenvector by $\psi_j^*=(a_j^{*},b_j^{*},c_j^{*})^T$. We have
\bmas
&(\lambda_j\mathbb{I}_6 + \hat{A}_r^{T})\bar{\psi}_j^* =
\begin{pmatrix}
	(\lambda_j+ 1)\mathbb{I}_2+\mathbb{O}_1  & -\mathbb{I}_2 & 0  \\
	\mathbb{I}_2 & \lambda_j\mathbb{I}_2 & -ik\mathbb{O}_1 \\
	0 & ik\mathbb{O}_1 & \lambda_j\mathbb{I}_2	
\end{pmatrix}
\begin{pmatrix}
	\bar{a}_j^* \\ \bar{b}_j^* \\ \bar{c}_j^*
\end{pmatrix}=0.
\emas
Then, after taking $\bar{b}_j^{*} =(1,-i) = \bar{e}$, the left eigenvector of $\hat{A}_r$ associate with $\lambda_j$ has the following form
$$
	\bar{\psi}_j^* = \left(  -\frac{1}{\Lambda_j}\bar{e}, ~\bar{e},  ~\frac{k}{\lambda_j}\bar{e} \right)^{T}.
$$
By the same argument, we can obtain \eqref{lambda-l-eigenvector} corresponding to $\lambda_l$ $ (l=4,5,6)$. The proof is completed.
\end{proof}
			
Thus, by the lemma \ref{etA}, we obtain the following Green's functions of \eqref{trans_fEM} and \eqref{trans_eEM}.
\begin{prop} \label{f-Green}
Let $\hat{U}_{1} = (\hat{u}_{1},\hat{E}_{1})^{T}$ be a solution of the system \eqref{trans_fEM} for all $t\ge 0,~k \in \mathbb{R}$. Then,
\be \label{solution-U1}
	\left(
	\begin{array}{c}
	\hat{u}_{1}(t,k)\\
	\hat{E}_{1}(t,k)
	\end{array}
	\right)
	=
	\hat{G}_f
	\left(
	\begin{array}{c}
	\hat{u}_{10}\\
	\hat{E}_{10}
	\end{array}
	\right).
\ee
Here $\hat{G}_f = \hat{G}_f(t,k)$ is given by
\be \label{Gf}
	\setlength{\arraycolsep}{10pt}
	\hat{G}_f =
	\left(
	\begin{array}{ccc}
	\frac{\lambda_+e^{\lambda_+t}-\lambda_-e^{\lambda_-t}}{\lambda_+-\lambda_-} & -(1+\gamma k^2) \frac{e^{\lambda_+t}-e^{\lambda_-t}}{\lambda_+-\lambda_-} \\
	  \frac{e^{\lambda_+t}-e^{\lambda_-t}}{\lambda_+-\lambda_-} & \frac{\lambda_+e^{\lambda_-t}-\lambda_-e^{\lambda_+t}}{\lambda_+-\lambda_-}
	\end{array}
	\right).
\ee
\end{prop}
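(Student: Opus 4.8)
The plan is to read off $\hat{G}_f$ directly from Lemma~\ref{etA}. System \eqref{trans_fEM} has the form $y'=Ay$ with $A=-\hat{A}_1$, so its solution operator is $\hat{G}_f=e^{tA}$ and $\hat{U}_1(t,k)=\hat{G}_f\,\hat{U}_1(0,k)$. The roots $\lambda_\pm$ of $\det(\lambda\mathbb{I}_2+\hat{A}_1)=\lambda^2+\lambda+1+\gamma k^2=0$ are the eigenvalues of $A$, and from the computation in the proof of Lemma~\ref{fEM-eigen-expansion} one has $\lambda_+-\lambda_-=i\sqrt{3+4\gamma k^2}\neq 0$ for every $k\in\mathbb{R}$ (since $\gamma=P'(1)>0$). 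Hence $A$ has two distinct eigenvalues together with the two linearly independent right eigenvectors $\psi_\pm$ and left eigenvectors $\psi_\pm^*$ furnished by Lemma~\ref{fEM-eigenvector}.

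First I would check the nondegeneracy hypothesis of Lemma~\ref{etA}: by \eqref{lambda+_eigenvector}, $(\bar\psi_+^*)^{T}\psi_+=(1,-\lambda_-)(\lambda_+,1)^{T}=\lambda_+-\lambda_-$, and similarly $(\bar\psi_-^*)^{T}\psi_-=\lambda_--\lambda_+$, both nonzero by the above. Then I would form the rank-one matrices $\psi_\pm\otimes\bar\psi_\pm^*=\psi_\pm(\bar\psi_\pm^*)^{T}$, namely
\[
\psi_+\otimes\bar\psi_+^* = \begin{pmatrix}\lambda_+ & -\lambda_+\lambda_-\\ 1 & -\lambda_-\end{pmatrix},\qquad
\psi_-\otimes\bar\psi_-^* = \begin{pmatrix}\lambda_- & -\lambda_+\lambda_-\\ 1 & -\lambda_+\end{pmatrix},
\]
and substitute them into \eqref{ode1} to obtain
\[
\hat{G}_f = \frac{e^{\lambda_+ t}}{\lambda_+-\lambda_-}\,\psi_+\otimes\bar\psi_+^* + \frac{e^{\lambda_- t}}{\lambda_--\lambda_+}\,\psi_-\otimes\bar\psi_-^*.
\]
An entrywise simplification — using the relations $\lambda_++\lambda_-=-1$ and $\lambda_+\lambda_-=1+\gamma k^2$ to rewrite the $(1,2)$-entry $-\lambda_+\lambda_-(e^{\lambda_+ t}-e^{\lambda_- t})/(\lambda_+-\lambda_-)$ as $-(1+\gamma k^2)(e^{\lambda_+ t}-e^{\lambda_- t})/(\lambda_+-\lambda_-)$ — then reproduces exactly the matrix \eqref{Gf}, and the representation $\hat{U}_1(t,k)=\hat{G}_f\,\hat{U}_1(0,k)$ from Lemma~\ref{etA} gives \eqref{solution-U1}.

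I do not anticipate a genuine obstacle here: the statement is an immediate corollary of Lemma~\ref{etA} and the explicit eigenvectors of Lemma~\ref{fEM-eigenvector}. The only points deserving care are (i) the sign convention — the $\lambda_\pm$ appearing in \eqref{Gf} are the eigenvalues of $-\hat{A}_1$, i.e.\ the roots of $\det(\lambda\mathbb{I}_2+\hat{A}_1)=0$, so that ${\rm Re}\,\lambda_\pm=-\frac12$ makes every entry of $\hat{G}_f$ decay in $t$; and (ii) the routine bookkeeping of the four $2\times 2$ matrix entries and the substitution $\lambda_+\lambda_-=1+\gamma k^2$.
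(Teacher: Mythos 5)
Your proposal is correct and follows essentially the same route as the paper: the paper likewise applies Lemma \ref{etA} with the eigenvectors of Lemma \ref{fEM-eigenvector}, writes $\hat{G}_f$ as the sum of the two normalized rank-one projections $e^{\lambda_\pm t}\,\psi_\pm\otimes\bar\psi_\pm^*/(\bar\psi_\pm^*)^{T}\psi_\pm$ with $(\bar\psi_\pm^*)^{T}\psi_\pm=\pm(\lambda_+-\lambda_-)$, and simplifies using $\lambda_+\lambda_-=1+\gamma k^2$. Your extra checks (distinctness of $\lambda_\pm$ via $\lambda_+-\lambda_-=i\sqrt{3+4\gamma k^2}\neq0$ and the sign convention $A=-\hat{A}_1$) are consistent with, and slightly more explicit than, the paper's argument.
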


\begin{proof}
According to Lemma \ref{etA} and Lemma \ref{fEM-eigenvector}, we have
\bmas
	\hat{G}_f &= \frac{\psi_+\otimes\bar{\psi}_+^*}{(\bar{\psi}_+^*)^{T}\psi_+}e^{\lambda_+t} + \frac{\psi_-\otimes\bar{\psi}_-^*}{(\bar{\psi}_-^*)^{T}\psi_-}e^{\lambda_-t}\\
	&=\frac{1}{\lambda_+-\lambda_-}\left( \ba \lambda_+ & -(1 +\gamma k^2)\\ 1 & -\lambda_-  \ea \right)e^{\lambda_+t} - \frac{1}{\lambda_+-\lambda_-}\left( \ba \lambda_- & -(1 +\gamma k^2)\\ 1 & -\lambda_+  \ea \right)e^{\lambda_-t},
\emas
which proves \eqref{Gf}.
\end{proof}
			
\begin{prop} \label{e-Green}
Let $\hat{U}_{r} = (\hat{u}_{r},\hat{E}_{r},\hat{B}_r)^{T}$ be a solution of the system \eqref{trans_eEM} for all $t\ge 0,~k \in \mathbb{R}$. Then,
\be
	\left(
	\begin{array}{c}
	\hat{u}_{r}(t,k)\\
	\hat{E}_{r}(t,k)\\
	\hat{B}_r(t,k)
	\end{array}
	\right)
	=
	\hat{G}_e
	\left(
	\begin{array}{c}
	\hat{u}_{r0}\\
	\hat{E}_{r0}\\
	\hat{B}_{r0}
	\end{array}
	\right).
\ee
Here $\hat{G}_e = \sum\limits_{j=1}^3P_je^{\lambda_j t} + \sum\limits_{l=4}^6P_le^{\lambda_l t}$ with
\bma
	&P_j= -\frac{\lambda_j\Lambda_j}{2\prod\limits_{k=1 \atop k\neq j}^3(\lambda_j-\lambda_k)}
	\left(  \ba
	-\frac{1}{\Lambda_j^2}\left( \mathbb{I}_2+i\mathbb{O}_1 \right) & \frac{1}{\Lambda_j}\left( \mathbb{I}_2+i\mathbb{O}_1 \right) & \frac{k}{\lambda_j\Lambda_j}\left( \mathbb{I}_2+i\mathbb{O}_1 \right)\\
	-\frac{1}{\Lambda_j}\left( \mathbb{I}_2+i\mathbb{O}_1 \right) & \mathbb{I}_2+i\mathbb{O}_1 & \frac{k}{\lambda_j}\left( \mathbb{I}_2+i\mathbb{O}_1 \right) \\
	\frac{k}{\lambda_j\Lambda_j}\left( \mathbb{I}_2+i\mathbb{O}_1 \right) &  -\frac{k}{\lambda_j}\left( \mathbb{I}_2+i\mathbb{O}_1 \right) &  -\frac{k^2}{\lambda_j^2}\left( \mathbb{I}_2+i\mathbb{O}_1 \right)
    \ea \right), \label{Pj} \\
	&P_l= -\frac{\lambda_l\Omega_l}{2\prod\limits_{k=4 \atop k\neq l}^6(\lambda_l-\lambda_k)}
	\left(  \ba
	-\frac{1}{\Omega_l^2}\left( \mathbb{I}_2-i\mathbb{O}_1 \right) & \frac{1}{\Omega_l}\left( \mathbb{I}_2-i\mathbb{O}_1 \right) & -\frac{k}{\lambda_l\Omega_l}\left( \mathbb{I}_2-i\mathbb{O}_1 \right)\\
	-\frac{1}{\Omega_l}\left( \mathbb{I}_2-i\mathbb{O}_1 \right) & \mathbb{I}_2-i\mathbb{O}_1 & -\frac{k}{\lambda_l}\left( \mathbb{I}_2-i\mathbb{O}_1 \right) \\
	-\frac{k}{\lambda_l\Omega_l}\left( \mathbb{I}_2-i\mathbb{O}_1 \right) &  \frac{k}{\lambda_l}\left( \mathbb{I}_2-i\mathbb{O}_1 \right) &  -\frac{k^2}{\lambda_l^2}\left( \mathbb{I}_2-i\mathbb{O}_1 \right)
	\ea \right), \label{Pl}
\ema
where $\Lambda_j=\sum\limits_{n=1 \atop n\neq j}^3 \lambda_n$, $\Omega_l=\sum\limits_{m=4 \atop m\neq l}^6 \lambda_m$,  $\mathbb{I}_2=\left( \ba 1 & 0\\ 0 & 1 \ea \right) $ and $\mathbb{O}_1 = \left( \ba 0 & -1 \\ 1 & 0 \ea \right).$
\end{prop}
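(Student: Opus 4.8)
The plan is to realise $\hat{G}_e$ as the matrix exponential $e^{-t\hat{A}_r}$ through Lemma~\ref{etA}. Writing the Fourier system \eqref{trans_eEM} as $\partial_t\hat{U}_r=A\hat{U}_r$ with $A=-\hat{A}_r$, a number $\mu$ is an eigenvalue of $A$ exactly when $\det(\mu\mathbb{I}_6+\hat{A}_r)=g^+(\mu)g^-(\mu)=0$, so by Property~\ref{property-nonzero} the matrix $A$ has, for each fixed $k\neq0$, six distinct eigenvalues $\lambda_1,\dots,\lambda_6$. Moreover the vectors constructed in the proof of Lemma~\ref{eEM-eigenvector} solve $(\lambda_j\mathbb{I}_6+\hat{A}_r)\psi_j=0$ and $(\lambda_j\mathbb{I}_6+\hat{A}_r^{T})\bar{\psi}_j^*=0$, hence they are precisely the right and left eigenvectors of $A$ attached to $\lambda_j$, with $(\bar{\psi}_j^*)^{T}\psi_j\neq0$ (checked below). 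Lemma~\ref{etA} then yields
\bmas
	\hat{G}_e=e^{tA}=\sum_{i=1}^{6}e^{\lambda_it}\,\frac{\psi_i\otimes\bar{\psi}_i^*}{(\bar{\psi}_i^*)^{T}\psi_i},
\emas
and the proposition reduces to checking that the $j$-th summand equals $P_je^{\lambda_jt}$ for $j=1,2,3$ and the $l$-th summand equals $P_le^{\lambda_lt}$ for $l=4,5,6$. The remaining value $k=0$, at which $\lambda_1=\lambda_4=0$, is recovered at the end by continuity, since $e^{-t\hat{A}_r}$ and the claimed right-hand side are both continuous in $k$ on all of $\mathbb{R}$.

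First I would compute the numerator $\psi_j\otimes\bar{\psi}_j^*$ block by block. With $e=(1,i)$ and $\bar{e}=(1,-i)$ one has the $2\times2$ identities $e\otimes\bar{e}=\mathbb{I}_2+i\mathbb{O}_1$ and $\bar{e}\otimes e=\mathbb{I}_2-i\mathbb{O}_1$, so substituting the explicit eigenvectors \eqref{lambda-j-eigenvector} shows that each of the nine $2\times2$ blocks of $\psi_j\otimes\bar{\psi}_j^*$ is a scalar multiple of $\mathbb{I}_2+i\mathbb{O}_1$; assembling these scalars into a $3\times3$ array reproduces precisely the matrix displayed in \eqref{Pj}, apart from the overall scalar prefactor. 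The corresponding computation with \eqref{lambda-l-eigenvector} and $\bar{e}\otimes e=\mathbb{I}_2-i\mathbb{O}_1$ reproduces the matrix in \eqref{Pl}.

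The crux is the scalar denominator. Expanding the block inner product and using $\bar{e}^{T}e=2$ gives $(\bar{\psi}_j^*)^{T}\psi_j=2(1-\Lambda_j^{-2}-k^2\lambda_j^{-2})$. To identify this with the reciprocal of the prefactor in \eqref{Pj} I would use $g^+(\lambda_j)=0$ together with the Vieta relation $\lambda_1+\lambda_2+\lambda_3=-(1+i)$ (equivalently, the identities $\lambda_j+k^2/\lambda_j=1/\Lambda_j$ and $\lambda_j+\Lambda_j=-(1+i)$ derived in the proof of Lemma~\ref{eEM-eigenvector}): after clearing denominators, $2(1-\Lambda_j^{-2}-k^2\lambda_j^{-2})$ collapses exactly to $-2(3\lambda_j^2+2(1+i)\lambda_j+k^2+1)/(\lambda_j\Lambda_j)=-2(g^+)'(\lambda_j)/(\lambda_j\Lambda_j)=-2\prod_{m\neq j}(\lambda_j-\lambda_m)/(\lambda_j\Lambda_j)$, which is nonzero; dividing the numerator from the previous step by it produces exactly $P_j$. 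The case $l=4,5,6$ runs identically, with $g^-$, $\Omega_l$ and $\lambda_4+\lambda_5+\lambda_6=-(1-i)$ replacing their counterparts, and yields $P_l$. Finally, at $k=0$ the apparent singularities of $P_1$ and $P_4$ (the factors $k/\lambda_1$, $k/\lambda_4$) are killed by the vanishing prefactors $\lambda_1\Lambda_1$, $\lambda_4\Omega_4$, so the claimed right-hand side extends continuously to $k=0$ and therefore coincides with $e^{-t\hat{A}_r}$ there as well.

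I expect the algebraic collapse of $2(1-\Lambda_j^{-2}-k^2\lambda_j^{-2})$ to $-2(g^+)'(\lambda_j)/(\lambda_j\Lambda_j)$ to be the only genuinely substantive point --- it is where the precise coefficients of $g^{\pm}$ and the Vieta relations enter --- while the block evaluations of the numerator and the handling of $k=0$ are routine bookkeeping with the explicit eigenvectors from Lemma~\ref{eEM-eigenvector}.
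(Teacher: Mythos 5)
Your proposal is correct and follows essentially the same route as the paper: apply Lemma \ref{etA} to the eigenvectors of Lemma \ref{eEM-eigenvector}, identify the blockwise outer products via $e\otimes\bar{e}=\mathbb{I}_2+i\mathbb{O}_1$, and reduce the normalization $(\bar{\psi}_j^*)^{T}\psi_j$ through $g^{+}(\lambda_j)=0$ and the Vieta relations to $-2\prod_{m\neq j}(\lambda_j-\lambda_m)/(\lambda_j\Lambda_j)$, which is exactly the paper's computation (your passage through $(g^{+})'(\lambda_j)$ is the same identity in disguise). Your explicit checks that the six eigenvalues are distinct for $k\neq0$ and that the $k=0$ case follows by continuity are minor refinements of points the paper leaves implicit.
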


\begin{proof}
According to Lemma \ref{etA} and Lemma \ref{eEM-eigenvector}, we have
$$
	\hat{G}_e =\sum_{j=1}^{3}\frac{\psi_j\otimes\bar{\psi}_j^*}{(\bar{\psi}_j^*)^{T}\psi_j}e^{\lambda_jt} + \sum_{l=4}^{6}\frac{\psi_l\otimes\bar{\psi}_l^*}{(\bar{\psi}_l^*)^{T}\psi_l}e^{\lambda_lt}:=\sum\limits_{j=1}^3P_je^{\lambda_j t} + \sum\limits_{l=4}^6P_le^{\lambda_l t}.
$$
For $j=1$, by using the relation $\frac{1}{\lambda_2+\lambda_3} = \lambda_1 +\frac{k^2}{\lambda_1},$ we have
\bmas
(\bar{\psi}_1^{*})^{T}\psi_1
&=-\frac{2}{\lambda_2+\lambda_3}\left[\frac{1}{\lambda_2+\lambda_3}-(\lambda_2+\lambda_3)+\frac{(\lambda_2+\lambda_3)k^2}{\lambda_1^2}\right]\\
&=-\frac{2}{\lambda_2+\lambda_3}\left[\lambda_1-(\lambda_2+\lambda_3)+\frac{\lambda_1\lambda_2\lambda_3}{\lambda_1^2}\right] =-\frac{2(\lambda_1-\lambda_2)(\lambda_1-\lambda_3)}{\lambda_1(\lambda_2+\lambda_3)}.
\emas
Similarly for $j=2,3$ and $l=4,5,6$,  we have
$$
(\bar{\psi}_j^*)^{T}\psi_j=-\frac{2\prod\limits_{n=1 \atop n \neq j}^3(\lambda_j-\lambda_n)}{\lambda_j\Lambda_j}, \quad (\bar{\psi}_l^*)^{T}\psi_l=-\frac{2\prod\limits_{m=4 \atop m\neq l}^6(\lambda_l-\lambda_m)}{\lambda_l\Omega_l}.
$$
Therefore, \eqref{Pj} and \eqref{Pl} can be obtained from above. And then, we complete this proof.
\end{proof}
			
Next, we consider the pointwise behaviors of $\hat{G}_f$ and $\hat{G}_e$.
			
\begin{prop} \label{upper-fEM}
The elements of $\hat{G}_f(t,k)$ have the following upper bounds for $t\ge 0$. For $|k| \le \frac{\sqrt{3\gamma}}{2\gamma}$, it holds
\be
	|\hat{G}_f(t,k)| \le C \left(  \ba 1 & 1  \\  1 & 1 \ea \right)e^{-\frac{t}{2}},
\ee
and for $|k| > \frac{\sqrt{3\gamma}}{2\gamma}$,  it holds
\be
	|\hat{G}_f(t,k)| \le C \left(  \ba 1 & |k| \\ |k|^{-1} & 1 \ea \right)e^{-\frac{t}{2}}.
\ee
Here, $|\hat{G}_f(t,k)|$ denotes that each element of the matrix $\hat{G}_f(t,k)$ takes the absolute value.
\end{prop}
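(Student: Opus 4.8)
The plan is to read the bounds straight off the explicit formula \eqref{Gf} for $\hat G_f$, using only the eigenvalue information from Lemma \ref{fEM-eigen-expansion}. The three facts I would record first are: since $\lambda_{\pm}=\frac{-1\pm i\sqrt{3+4\gamma k^2}}{2}$, one has $\mathrm{Re}\,\lambda_{\pm}=-\tfrac12$, hence $|e^{\lambda_{\pm}t}|=e^{-t/2}$ for all $k\in\mathbb{R}$, $t\ge0$; from $\lambda_{+}\lambda_{-}=1+\gamma k^2$ and $\lambda_{+}+\lambda_{-}=-1$ we get $|\lambda_{\pm}|=\sqrt{1+\gamma k^2}$; and $|\lambda_{+}-\lambda_{-}|=\sqrt{3+4\gamma k^2}\ge\sqrt3>0$, so the denominator in \eqref{Gf} never vanishes and the formula is valid for every $k$.

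Next I would bound the four entries by the triangle inequality. For the two diagonal entries, $\big|\frac{\lambda_{+}e^{\lambda_{+}t}-\lambda_{-}e^{\lambda_{-}t}}{\lambda_{+}-\lambda_{-}}\big|$ and $\big|\frac{\lambda_{+}e^{\lambda_{-}t}-\lambda_{-}e^{\lambda_{+}t}}{\lambda_{+}-\lambda_{-}}\big|$ are both $\le \frac{|\lambda_{+}|+|\lambda_{-}|}{|\lambda_{+}-\lambda_{-}|}e^{-t/2}=\frac{2\sqrt{1+\gamma k^2}}{\sqrt{3+4\gamma k^2}}e^{-t/2}$; since $u\mapsto\frac{1+u}{3+4u}$ is decreasing on $[0,\infty)$ (from $\tfrac13$ to $\tfrac14$), the prefactor is bounded uniformly in $k$, so both entries are $\le Ce^{-t/2}$. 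For the $(2,1)$ entry, $\big|\frac{e^{\lambda_{+}t}-e^{\lambda_{-}t}}{\lambda_{+}-\lambda_{-}}\big|\le\frac{2}{\sqrt{3+4\gamma k^2}}e^{-t/2}$, which is $\le Ce^{-t/2}$ when $|k|\le\frac{\sqrt{3\gamma}}{2\gamma}$, while for $|k|>\frac{\sqrt{3\gamma}}{2\gamma}$ one uses $3+4\gamma k^2\ge4\gamma k^2$ to get $\le C|k|^{-1}e^{-t/2}$. For the $(1,2)$ entry, $\big|(1+\gamma k^2)\frac{e^{\lambda_{+}t}-e^{\lambda_{-}t}}{\lambda_{+}-\lambda_{-}}\big|\le\frac{2(1+\gamma k^2)}{\sqrt{3+4\gamma k^2}}e^{-t/2}$, which is $\le Ce^{-t/2}$ for $|k|\le\frac{\sqrt{3\gamma}}{2\gamma}$; and for $|k|>\frac{\sqrt{3\gamma}}{2\gamma}$, again with $3+4\gamma k^2\ge4\gamma k^2$ and $1<\tfrac43\gamma k^2$, one gets $\frac{1+\gamma k^2}{\sqrt{3+4\gamma k^2}}\le\frac{\frac73\gamma k^2}{2\sqrt{\gamma}\,|k|}\le C|k|$. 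Collecting the four bounds into a matrix yields exactly the two claimed estimates.

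There is no genuine obstacle here: the proof is essentially a one-line consequence of $\mathrm{Re}\,\lambda_{\pm}=-\tfrac12$ plus elementary bookkeeping of the algebraic prefactors. The only point that takes a moment of care is splitting the analysis at the threshold $|k|=\frac{\sqrt{3\gamma}}{2\gamma}$ (where $3+4\gamma k^2=6$) so that the weights $1$, $|k|$, $|k|^{-1}$ emerge in the stated positions; if one wanted an exact expression one could instead write $\frac{e^{\lambda_{+}t}-e^{\lambda_{-}t}}{\lambda_{+}-\lambda_{-}}=\frac{2\sin\!\big(\tfrac{t}{2}\sqrt{3+4\gamma k^2}\big)}{\sqrt{3+4\gamma k^2}}\,e^{-t/2}$, but the crude triangle-inequality bound already suffices.
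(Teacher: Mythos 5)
Your proposal is correct and follows essentially the same route as the paper: it uses the explicit formula \eqref{Gf}, the facts ${\rm Re}\,\lambda_{\pm}=-\tfrac12$ and $|\lambda_{+}-\lambda_{-}|=\sqrt{3+4\gamma k^2}$, and a triangle-inequality bound on each entry with the case split at $|k|=\frac{\sqrt{3\gamma}}{2\gamma}$. The only difference is cosmetic: you compute $|\lambda_{\pm}|=\sqrt{1+\gamma k^2}$ exactly and check monotonicity of the prefactor, where the paper simply records $|\lambda_{\pm}|=O(1)$ or $O(1)|k|$ in the two regimes.
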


\begin{proof}
 We denote $\eta = \frac{\sqrt{3\gamma}}{2\gamma}. $ Then from Lemma \ref{fEM-eigen-expansion}, we have
$$
	|\lambda_{\pm}|= \left| -\frac{1}{2} \pm \frac{i}{2}\sqrt{3+4\gamma k^2}\right|
	=\left\{\bal
	O(1), & |k| \le \eta,\\
	O(1)|k|, & |k| > \eta,
	\ea\right.
$$
and
$$
	|\lambda_{+}-\lambda_-| = \sqrt{3+4\gamma k^2}
	=\left\{\bal
	 O(1), & |k| \le \eta,\\
	 O(1)|k|, & |k| >\eta.
	\ea\right.
$$
Therefore, by Proposition \ref{f-Green}, we have the upper bounds of $\hat{G}_f$ as follows:
\bmas
	&|\hat{G}_f^{11}|,|\hat{G}_f^{22}| \le \left( \frac{|\lambda_+|}{|\lambda_+-\lambda_-|}+\frac{|\lambda_-|}{|\lambda_+-\lambda_-|}\right)e^{-\frac{t}{2}} \le Ce^{-\frac{t}{2}}, \quad k \in \mathbb{R}, \\
	&|\hat{G}_f^{12}| \le \frac{2\left|1+\gamma k^2\right|}{|\lambda_+-\lambda_-|}e^{-\frac{t}{2}} \le \begin{cases}
		Ce^{-\frac{t}{2}}, & |k| \le \eta,\\
		C|k|e^{-\frac{t}{2}}, & |k| > \eta,
	\end{cases}\\
	&|\hat{G}_f^{21}|\le \frac{2}{|\lambda_+-\lambda_-|}e^{-\frac{t}{2}} \le
	\begin{cases}
		Ce^{-\frac{t}{2}},\quad & |k| \le \eta, \\
		C|k|^{-1}e^{-\frac{t}{2}}, & |k| > \eta.
	\end{cases}
\emas
Then, we complete the proof.
\end{proof}

\begin{prop} \label{upper_eEM}
The elements of $\hat{G}_e(t,k)$ have the following upper bounds for $t\ge 0$. For $|k|< \epsilon$,
\bmas
	|\hat{G}_e(t,k)| \le &\,C \left( \ba |k|^2I_1 & |k|^2I_1 & |k|I_1 \\ |k|^2I_1 & |k|^2I_1 & |k|I_1 \\ |k|I_1 & |k|I_1 & I_1 \ea \right)e^{-\frac{k^2t}{2}}\\
	&+ C\left( \ba I_1 & I_1 & |k|I_1 \\ I_1 & I_1 & |k|I_1 \\ |k|I_1 & |k|I_1 & |k|^2I_1 \ea \right)e^{-\frac{t}{8}}.
\emas
For $|k|>R$,
\bmas
	|\hat{G}_e(t,k)| \le &\,C \left( \ba I_1 & |k|^{-2}I_1 & |k|^{-1}I_1 \\ |k|^{-2}I_1 & |k|^{-4}I_1 & |k|^{-3}I_1 \\ |k|^{-1}I_1 & |k|^{-3}I_1 & |k|^{-2}I_1 \ea \right)e^{-\frac{t}{2}}\\
	&+ C\left( \ba |k|^{-2}I_1 & |k|^{-1}I_1 & |k|^{-1}I_1 \\ |k|^{-1}I_1 & I_1 & I_1 \\ |k|^{-1}I_1 & I_1 & I_1 \ea \right)e^{-\frac{t}{4 k^2}}.
\emas
For $\epsilon\le|k|\le R$,
$$
	|\hat{G}_e(t,k)| \le C \left( \ba I_1 & I_1 & I_1 \\ I_1 & I_1 & I_1 \\ I_1 & I_1 & I_1 \ea \right)e^{-ct}.
$$
Here, $|\hat{G}_e(t,k)|$ denotes  the absolute value of each element of the matrix $\hat{G}_e(t,k)$ and  $I_1 = \left( \ba 1 & 1 \\ 1 & 1 \ea \right).$
\end{prop}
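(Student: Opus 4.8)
The plan is to start from the spectral representation $\hat{G}_e(t,k)=\sum_{j=1}^{3}P_je^{\lambda_jt}+\sum_{l=4}^{6}P_le^{\lambda_lt}$ of Proposition \ref{e-Green} and bound each summand entrywise. Since $\lambda_{j+3}=\bar\lambda_j$ forces $\Omega_{j+3}=\bar\Lambda_j$ and $\prod_{m\ne j+3}(\lambda_{j+3}-\lambda_m)=\overline{\prod_{n\ne j}(\lambda_j-\lambda_n)}$, a comparison of \eqref{Pj} with \eqref{Pl} shows $|P_{j+3}|=|P_j|$ entrywise while $|e^{\lambda_{j+3}t}|=e^{({\rm Re}\lambda_j)t}=|e^{\lambda_jt}|$; hence it suffices to estimate $P_1,P_2,P_3$. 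For each of them, in each frequency band, I would record: (i) the sign and size of ${\rm Re}\lambda_j$, which furnishes the exponential factor; and (ii) the sizes of $|\lambda_j|$, of $\Lambda_j$, and of the two differences $\lambda_j-\lambda_n$ with $n\in\{1,2,3\}\setminus\{j\}$, which are the only quantities appearing in \eqref{Pj}. The single algebraic fact used throughout is the Vieta relation $\lambda_1+\lambda_2+\lambda_3=-(1+i)$, i.e. $\Lambda_j=-(1+i)-\lambda_j$; it shows in particular that $\Lambda_j$ never vanishes (otherwise $\lambda_j=-(1+i)$, which one checks is not a root of $g^+$), so every entry of $P_j$ is meaningful for $k\ne0$.

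\textbf{Low frequencies $|k|<\epsilon$.} By Lemma \ref{eEM-eigen-expansion}(1), $\lambda_1=-(1+i)k^2+O(k^4)$, so ${\rm Re}\lambda_1\le-\tfrac12k^2$ and $e^{\lambda_1t}\le e^{-k^2t/2}$ for $\epsilon$ small, while $k^2/\lambda_1=O(1)$, $\Lambda_1=-(1+i)+O(k^2)$ is bounded away from $0$, and $\lambda_1-\lambda_{2,3}=-z_{2,3}+O(k^2)$ is bounded away from $0$. Feeding these sizes into \eqref{Pj} gives a prefactor of order $|\lambda_1|=O(k^2)$, and reading off the nine blocks produces exactly $C$ times the first matrix. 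For $j=2,3$, ${\rm Re}\lambda_j={\rm Re}z_j+O(k^2)<-\tfrac18$ (using ${\rm Re}z_2<-\tfrac14$ computed in the proof of Lemma \ref{eEM-eigen-expansion}), hence $e^{\lambda_jt}\le e^{-t/8}$; here $\lambda_j,\Lambda_j,\lambda_j-\lambda_n$ are all $O(1)$ and bounded away from $0$, so \eqref{Pj} gives $C$ times the second matrix, the powers of $|k|$ coming only from the explicit $k$, $k/\lambda_j$, $k^2/\lambda_j^2$ entries. Summing the four contributions (with their conjugates) yields the claim for $|k|<\epsilon$.

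\textbf{High frequencies $|k|>R$.} By Lemma \ref{eEM-eigen-expansion}(2), $\lambda_1=-(1+i)+O(k^{-2})$, so ${\rm Re}\lambda_1\le-\tfrac12$ and $e^{\lambda_1t}\le e^{-t/2}$; crucially $\Lambda_1=-(1+i)-\lambda_1=O(k^{-2})$ is now small, whereas $\lambda_1-\lambda_{2,3}=\mp ik+O(1)=O(|k|)$. Thus the prefactor of $P_1$ has order $|\lambda_1\Lambda_1|/|k|^2=O(k^{-4})$, and combining it with the matrix entries $\Lambda_1^{-2}=O(k^4)$, $\Lambda_1^{-1}=O(k^2)$, $k/(\lambda_1\Lambda_1)=O(|k|^3)$, $k/\lambda_1=O(|k|)$, $k^2/\lambda_1^2=O(k^2)$ reproduces the first high-frequency matrix. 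For $j=2,3$, $\lambda_j=\pm ik+O(|k|^{-1})$ with ${\rm Re}\lambda_j=-\tfrac12k^{-2}+O(k^{-3})\le-\tfrac14k^{-2}$, so $e^{\lambda_jt}\le e^{-t/(4k^2)}$; moreover $|\lambda_j|=O(|k|)$, $\Lambda_j=\mp ik+O(1)=O(|k|)$, $\lambda_j-\lambda_n=O(|k|)$, so the prefactor is $O(1)$ and the entries $\Lambda_j^{-2},\Lambda_j^{-1},k/(\lambda_j\Lambda_j),k/\lambda_j,k^2/\lambda_j^2$ have orders $k^{-2},k^{-1},k^{-1},1,1$, which is exactly the second high-frequency matrix. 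Summation gives the claim for $|k|>R$.

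\textbf{Bounded frequencies $\epsilon\le|k|\le R$.} Here only qualitative nondegeneracy is needed: by Property \ref{property-nonzero} the roots of $g^+$ (and of $g^-$) are simple and nonzero, $g^+$ and $g^-$ have no common root (a common root would satisfy $\lambda=\pm ik$, but $g^{\pm}(\pm ik)=\pm ik\ne0$), and $\Lambda_j=-(1+i)-\lambda_j\ne0$; by Lemma \ref{eEM-eigen-expansion}(3), ${\rm Re}\lambda_j\le-c$. All quantities in \eqref{Pj}--\eqref{Pl} are continuous and hence uniformly bounded on the compact annulus, so $|P_j|,|P_l|\le C$ and $|\hat{G}_e(t,k)|\le C\,(\text{all-ones matrix})\,e^{-ct}$. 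The only real work is the entry-by-entry bookkeeping of \eqref{Pj} in the first two bands; the idea that organizes it is to replace $\Lambda_j$ by $-(1+i)-\lambda_j$, which both forbids vanishing and explains why $\Lambda_1$ collapses to size $k^{-2}$ at high frequency — the source of the strong anisotropy of the first matrix — while $k^2/\lambda_1$ stays $O(1)$ at low frequency, which keeps the $(3,3)$-block of order $1$.
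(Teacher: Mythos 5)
Your proposal is correct and follows essentially the same route as the paper: bound each block of $P_j$ in the three frequency bands using the eigenvalue expansions of Lemma \ref{eEM-eigen-expansion}, the real-part bounds ${\rm Re}\lambda_1\le -\tfrac12 k^2$, ${\rm Re}\lambda_{2,3}\le -\tfrac18$ (low), ${\rm Re}\lambda_1\le-\tfrac12$, ${\rm Re}\lambda_{2,3}\le-\tfrac1{4k^2}$ (high), compactness together with Property \ref{property-nonzero} in the middle band, and conjugacy to dispose of $P_4,P_5,P_6$. The only (harmless) difference is organizational: you use the Vieta identity $\Lambda_j=-(1+i)-\lambda_j$ to control $\Lambda_j$ and to note $\Lambda_j\neq 0$, whereas the paper computes the sums and differences $\lambda_j\pm\lambda_n$ directly from the expansions; your explicit check that $\Lambda_j$ never vanishes is a small but welcome addition the paper leaves implicit.
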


\begin{proof}
From Lemma \ref{eEM-eigen-expansion}, for $|k| < \epsilon$ sufficiently small, $\lambda_j,~j=1,2,3$ have the following expansions:
\bmas
	\lambda_1&= -k^2+O(k^4)-i\left[k^2+O(k^4)\right],\\
	\lambda_2 &= -\frac{1-a_1}{2} + \frac{\sqrt{5}+2a_2}{2\sqrt{5}}k^2 +O(k^4) + i\left[ \frac{b_1-1}{2}+\frac{1+2b_2}{2\sqrt{5}}k^2+O(k^4)\right],\\
	\lambda_3 &= -\frac{1+a_1}{2} + \frac{\sqrt{5}-2a_2}{2\sqrt{5}}k^2 +O(k^4) - i\left[ \frac{b_1+1}{2}-\frac{1-2b_2}{2\sqrt{5}}k^2+O(k^4)\right],
\emas
where
\bmas
&a_1={\rm Re}\sqrt{2i-4}=(\sqrt{5}-2)^{\frac12}, \quad b_1={\rm Im}\sqrt{2i-4}=(\sqrt{5}+2)^{\frac12},\\
&a_2={\rm Re}\sqrt{2i-1}=\Big(\frac{\sqrt{5}-1}{2}\Big)^{\frac12}, \quad b_2={\rm Im}\sqrt{2i-1}= \Big(\frac{\sqrt{5}+1}{2}\Big)^{\frac12}.
\emas
Thus,
\bmas
	\lambda_1 \pm \lambda_2 &= \mp \frac{1-a_1}{2} +O(k^2) \pm i\left[ \frac{b_1-1}{2} + O(k^2)\right],\\
	\lambda_1 \pm \lambda_3 &= \mp \frac{1+a_1}{2} +O(k^2) \mp i\left[ \frac{b_1+1}{2} + O(k^2)\right],\\
	\lambda_2+\lambda_3 &= -1+O(k^2) +i[-1+O(k^2)], \quad
	\lambda_2-\lambda_3 = a_1 +O(k^2) +i[b_1+O(k^2)].
\emas
And for $|k| > R $ is sufficiently large, one has
\bmas
	\lambda_1 &=-1 +k^{-2} + O(k^{-3}) + i\left[ -1 +k^{-2} +O(k^{-3})\right],\\
	\lambda_2 &= -\frac12k^{-2} +O(k^{-3}) + ik\left[ 1 +\frac12k^{-2} +O(k^{-3}) \right],\\
	\lambda_3 &= -\frac12k^{-2} +O(k^{-3}) - ik\left[ 1 +\frac12k^{-2} +O(k^{-3})\right].
\emas
Thus,
\bmas
	\lambda_1 \pm \lambda_2 &= -1 +O(k^{-2}) + i\left[ -1\pm k +O(k^{-1})\right],\\
	\lambda_1 \pm \lambda_3 &= -1 +O(k^{-2}) + i\left[ -1 \mp k +O(k^{-1})\right],\\
	\lambda_2+\lambda_3 &= -k^{-2} +O(k^{-3}) + iO(k^{-2}), \quad
	\lambda_2-\lambda_3 = O(k^{-3}) + ik\left[2 + O(k^{-2})\right].
\emas
From above  and Proposition \ref{e-Green},  we have
\bmas
	\bigg| \sum_{j=1}^3P_j^{11}e^{\lambda_jt}\bigg| &\le \sum_{j=1}^3\frac{|\lambda_j||e^{\lambda_jt}|I_1}{2|\Lambda_j|\prod\limits_{k=1 \atop k\neq j}^3|\lambda_j-\lambda_k|}
	\le C \begin{cases}
		|k|^2I_1e^{-\frac{k^2t}{2}} + I_1e^{-\frac{t}{8}}, &  |k|< \epsilon,\\
		I_1e^{-\frac{t}{2}}+|k|^{-2}I_1e^{-\frac{t}{4k^2}}, &  |k|>R.
	\end{cases}\\
	\bigg| \sum_{j=1}^3P_j^{22}e^{\lambda_jt}\bigg| &\le \sum_{j=1}^3\frac{|\lambda_j||\Lambda_j||e^{\lambda_jt}|I_1}{2\prod\limits_{k=1 \atop k\neq j}^3|\lambda_j-\lambda_k|}
	\le C \begin{cases}
		|k|^2I_1e^{-\frac{k^2t}{2}} + I_1e^{-\frac{t}{8}}, &  |k|< \epsilon,\\
		|k|^{-4}I_1e^{-\frac{t}{2}}+I_1e^{-\frac{t}{4k^2}}, &  |k|>R.
	\end{cases}\\
	\bigg| \sum_{j=1}^3P_j^{33}e^{\lambda_jt}\bigg| &\le \sum_{j=1}^3\frac{k^2}{2|\lambda_j|}\frac{|\Lambda_j||e^{\lambda_jt}|I_1}{\prod\limits_{k=1 \atop k\neq j}^3|\lambda_j-\lambda_k|}
	\le C \begin{cases}
		I_1e^{-\frac{k^2t}{2}} + |k|^2I_1e^{-\frac{t}{8}}, &  |k|< \epsilon,\\
		|k|^{-2}I_1e^{-\frac{t}{2}}+I_1e^{-\frac{t}{4k^2}}, &  |k|>R,
	\end{cases}
\emas
where we had used
\bmas
&{\rm Re}\lambda_1\le -\frac{k^2}2, \quad {\rm Re}\lambda_j\le -\frac{1}8,\quad j=2,3,~~ |k| \le \epsilon,\\
 &{\rm Re}\lambda_1 \le -\frac12, \quad {\rm Re}\lambda_j \le -\frac{1}{4k^2},\quad j=2,3,~~|k| >R.
 \emas
And
\bmas
	\bigg| \sum_{j=1}^3P_j^{12}e^{\lambda_jt}\bigg|,\bigg| \sum_{j=1}^3P_j^{21}e^{\lambda_jt}\bigg| &\le \sum_{j=1}^3\frac{|\lambda_j||e^{\lambda_jt}|I_1}{2\prod\limits_{k=1 \atop k\neq j}^3|\lambda_j-\lambda_k|}
	\le C \begin{cases}
		|k|^2I_1e^{-\frac{k^2t}{2}} + I_1e^{-\frac{t}{8}}, &  |k|< \epsilon,\\
		|k|^{-2}I_1e^{-\frac{t}{2}}+|k|^{-1}I_1e^{-\frac{t}{4k^2}}, &  |k|>R.
	\end{cases}\\
	\bigg| \sum_{j=1}^3P_j^{13}e^{\lambda_jt}\bigg|, \bigg|\sum_{j=1}^3P_j^{31}e^{\lambda_jt}\bigg| &\le \sum_{j=1}^3\frac{|k||e^{\lambda_jt}|I_1}{2\prod\limits_{k=1 \atop k\neq j}^3|\lambda_j-\lambda_k|}
	\le C \begin{cases}
		|k|I_1e^{-\frac{k^2t}{2}} + |k|I_1e^{-\frac{t}{8}}, &  |k|< \epsilon,\\
		|k|^{-1}I_1e^{-\frac{t}{2}}+|k|^{-1}I_1e^{-\frac{t}{4k^2}}, &  |k|>R.
	\end{cases}\\
	\bigg| \sum_{j=1}^3P_j^{23}e^{\lambda_jt}\bigg|, \bigg| \sum_{j=1}^3P_j^{32}e^{\lambda_jt}\bigg| &\le \sum_{j=1}^3\frac{|k||\Lambda_j||e^{\lambda_jt}|I_1}{2\prod\limits_{k=1 \atop k\neq j}^3|\lambda_j-\lambda_k|}
	\le C \begin{cases}
		|k|I_1e^{-\frac{k^2t}{2}} + |k|I_1e^{-\frac{t}{8}}, &\quad |k|< \epsilon,\\
		|k|^{-3}I_1e^{-\frac{t}{2}}+I_1e^{-\frac{t}{4k^2}}, &\quad |k|>R.
	\end{cases}
\emas
 By the third term of Property \ref{property-nonzero}, one can see that $|P_j|$ $(j=1,2,3)$ are bounded for $\epsilon \le |k| \le R$. And using Lemma \ref{eEM-eigen-expansion}, for $\epsilon \le |k| \le R$, we obtain
$$
	\bigg| \sum_{j=1}^3P_je^{\lambda_jt}\bigg| \le C\left( \ba I_1 & I_1 & I_1 \\ I_1 & I_1 & I_1 \\ I_1 & I_1 & I_1 \ea\right)e^{-ct}.
$$
Since $\lambda_j$ $(j=4,5,6)$ are conjugate with $\lambda_{j-3}$, $\sum_{l=4}^6P_le^{\lambda_lt}$ have the same estimates as $\sum_{j=1}^3P_je^{\lambda_jt}$. Thus, we complete this proof.
\end{proof}			
	
\begin{lem} \label{Gf-convolution}
Let the function $f=f(x)$ and its derivatives belong to $L^2(\mathbb{R})$ . Then, the following estimates hold 
\bma
	&\| \partial_x^\alpha(G_f^{ii}\ast f)\|_{L^2} \le Ce^{-\frac{t}{2}}\| \partial_x^\alpha f\|_{L^2}, \quad i=1,2,\\
	&\| \partial_x^\alpha(G_f^{12}\ast f)\|_{L^2} \le Ce^{-\frac{t}{2}}( \| \partial_x^\alpha f\|_{L^2} + \| \partial_x^{\alpha+1} f\|_{L^2}),\\
	&\| \partial_x^\alpha(G_f^{21}\ast f)\|_{L^2} \le Ce^{-\frac{t}{2}} \| \partial_x^{(\alpha-1)_+} f\|_{L^2},
\ema
where $(\alpha-1)_+ = \max{\{0,\alpha-1\}}$.
\end{lem}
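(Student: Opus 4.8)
The plan is to pass to the Fourier side, where each convolution $G_f^{ij}\ast f$ becomes multiplication of $\hat f$ by the scalar multiplier $\hat G_f^{ij}(t,k)$, and then to combine Plancherel's theorem with the pointwise bounds on $\hat G_f$ proved in Proposition~\ref{upper-fEM}. Writing $\eta=\frac{\sqrt{3\gamma}}{2\gamma}$, those bounds give $|\hat G_f^{ii}(t,k)|\le Ce^{-t/2}$ for all $k\in\mathbb{R}$ and $i=1,2$; $|\hat G_f^{12}(t,k)|\le Ce^{-t/2}$ for $|k|\le\eta$ and $|\hat G_f^{12}(t,k)|\le C|k|e^{-t/2}$ for $|k|>\eta$; and, by the symmetric estimate, $|\hat G_f^{21}(t,k)|\le Ce^{-t/2}$ for $|k|\le\eta$ and $|\hat G_f^{21}(t,k)|\le C|k|^{-1}e^{-t/2}$ for $|k|>\eta$. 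Since $\widehat{\partial_x^\alpha(G_f^{ij}\ast f)}(k)=(ik)^\alpha\,\hat G_f^{ij}(t,k)\,\hat f(k)$, Plancherel's theorem turns each claimed bound into a pointwise estimate on the multiplier $|k|^{2\alpha}|\hat G_f^{ij}(t,k)|^2$, via
$$\|\partial_x^\alpha(G_f^{ij}\ast f)\|_{L^2}^2=\int_{\mathbb{R}}|k|^{2\alpha}\,|\hat G_f^{ij}(t,k)|^2\,|\hat f(k)|^2\,dk.$$

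For the diagonal entries this is immediate: $|k|^{2\alpha}|\hat G_f^{ii}(t,k)|^2\le Ce^{-t}|k|^{2\alpha}$, so integrating against $|\hat f|^2$ yields $\|\partial_x^\alpha(G_f^{ii}\ast f)\|_{L^2}\le Ce^{-t/2}\|\partial_x^\alpha f\|_{L^2}$. For $G_f^{12}$ the two regimes combine into the single bound $|\hat G_f^{12}(t,k)|\le C(1+|k|)e^{-t/2}$ valid on all of $\mathbb{R}$, hence $|k|^{2\alpha}|\hat G_f^{12}(t,k)|^2\le Ce^{-t}\bigl(|k|^{2\alpha}+|k|^{2\alpha+2}\bigr)$, and integration produces exactly the two terms $\|\partial_x^\alpha f\|_{L^2}$ and $\|\partial_x^{\alpha+1}f\|_{L^2}$.

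The entry $G_f^{21}$ is the one that needs a short case split, and it is where the notation $(\alpha-1)_+=\max\{0,\alpha-1\}$ enters. On the low-frequency region $|k|\le\eta$ I would bound $|k|^{2\alpha}|\hat G_f^{21}(t,k)|^2\le Ce^{-t}|k|^{2\alpha}\le Ce^{-t}|k|^{2(\alpha-1)_+}$, using $|k|^{2\alpha}=|k|^2|k|^{2(\alpha-1)}\le\eta^2|k|^{2(\alpha-1)}$ when $\alpha\ge1$ and noting that the inequality is trivial when $\alpha=0$. On the high-frequency region $|k|>\eta$ the extra factor $|k|^{-1}$ in $\hat G_f^{21}$ gives $|k|^{2\alpha}|\hat G_f^{21}(t,k)|^2\le Ce^{-t}|k|^{2\alpha-2}\le Ce^{-t}|k|^{2(\alpha-1)_+}$, again trivially when $\alpha=0$ since $|k|^{-2}\le\eta^{-2}$ there. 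Adding the two regions and applying Plancherel once more gives $\|\partial_x^\alpha(G_f^{21}\ast f)\|_{L^2}\le Ce^{-t/2}\|\partial_x^{(\alpha-1)_+}f\|_{L^2}$.

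I do not expect a genuine obstacle: every step is a routine Fourier-multiplier argument powered by Proposition~\ref{upper-fEM}. The only place that needs a little care is the $G_f^{21}$ term, where one must correctly trade the derivative gained at high frequency against the absence of such a gain near $k=0$ — and this trade-off is precisely what the truncation $(\alpha-1)_+$ in the statement is designed to encode.
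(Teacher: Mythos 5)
Your proposal is correct and follows essentially the same route as the paper: Plancherel's theorem reduces each estimate to the pointwise bounds on $\hat G_f^{ij}$ from Proposition \ref{upper-fEM}, with the frequency split at $\eta=\frac{\sqrt{3\gamma}}{2\gamma}$ handling the $|k|$ and $|k|^{-1}$ factors, exactly as in the paper's argument. Your treatment of the $G_f^{21}$ entry, trading $|k|^{2\alpha}$ against $|k|^{2(\alpha-1)_+}$ separately on the two regions, is just a slightly more explicit write-up of the same step.
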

\begin{proof}
By Plancherel's theorem and from Proposition \ref{upper-fEM}, for $i=1,2$, we have
\bmas
	\| \partial_x^\alpha (G_f^{ii}\ast f)\|_{L^2}^2 &= \int_{|k|\le \eta}  |(ik)^\alpha \hat{G}_f^{ii}\hat{f} |^2\,dk + \int_{|k|> \eta} |(ik)^\alpha \hat{G}_f^{ii}\hat{f} |^2\,dk\\
	&\le Ce^{-t}\int_{\R}|k|^\alpha| \hat{f}|^2\,dk \le Ce^{-t} \| \partial^\alpha_x f\|_{L^2}^2,
\emas
where $\eta = \frac{\sqrt{3\gamma}}{2\gamma}.$ And, we have
\bmas
	\| \partial_x^\alpha (G_f^{12}\ast f)\|_{L^2}^2 &= \int_{|k|\le \eta}| (ik)^\alpha \hat{G}_f^{12}\hat{f}|^2\,dk + \int_{|k|> \eta}| (ik)^\alpha \hat{G}_f^{12}\hat{f}|^2\,dk\\
	&\le Ce^{-t}\int_{|k|\le \eta}|(ik)^\alpha \hat{f} |^2\,dk+ Ce^{-t}\int_{|k|> \eta}|(ik)^\alpha|k|^{2} \hat{f} |^2\,dk\\
	&\le Ce^{-t}( \| \partial_x^\alpha f\|_{L^2}^2 + \| \partial^{\alpha+1}_x f\|_{L^2}^2),\\
	\| \partial_x^\alpha (G_f^{21}\ast f)\|_{L^2}^2 &= \int_{|k|\le \eta}| (ik)^\alpha \hat{G}_f^{21}\hat{f}|^2\,dk + \int_{|k|> \eta}| (ik)^\alpha \hat{G}_f^{21}\hat{f}|^2\,dk\\
	&\le Ce^{-t}\int_{|k|\le \eta}|(ik)^\alpha \hat{f} |^2\,dk + Ce^{-t}\int_{|k|> \eta}|(ik)^\alpha|k|^{-1} \hat{f} |^2\,dk\\
	&\le Ce^{-t} \| \partial^{(\alpha-1)_+}_x f\|_{L^2}^2,
\emas
where $(\alpha-1)_+ = \max{\{ 0,\alpha-1\}}$. Thus, we done with this proof.
\end{proof}

\begin{cor} \label{U1_linear_time}
Suppose $U_1 =(u_1,E_1)^T$ is the solution to the Cauchy problem \eqref{fEM}. Then, for any integer $\alpha \ge 0$, 
we have the following time decay rates:
\bma
	&\| \partial_x^\alpha u_1 \|_{L^2} \le Ce^{-\frac{t}{2}} \| (\partial^\alpha_x u_{10}, \partial_x^\alpha E_{10}, \partial^{\alpha+1}_xE_{10})\|_{L^2},\\
	&\| \partial_x^\alpha E_1 \|_{L^2} \le Ce^{-\frac{t}{2}} \| (\partial^{(\alpha-1)_+}_x u_{10}, \partial^{\alpha}_xE_{10})\|_{L^2},
\ema
where $C$ is a positive constant and $(\alpha-1)_+ = \max{\{0,\alpha-1\}}$.
\end{cor}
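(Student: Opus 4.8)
The plan is to read the solution $(u_1,E_1)$ off the Green's function representation of Proposition~\ref{f-Green} and then apply the convolution estimates of Lemma~\ref{Gf-convolution} entry by entry. Taking the inverse Fourier transform in \eqref{solution-U1}--\eqref{Gf}, the first component is
\be
u_1(t,\cdot) = G_f^{11}(t,\cdot)\ast u_{10} + G_f^{12}(t,\cdot)\ast E_{10},
\ee
so differentiating $\alpha$ times in $x$, using the triangle inequality, and invoking the first two estimates of Lemma~\ref{Gf-convolution} gives
\bmas
	\| \partial_x^\alpha u_1 \|_{L^2}
	&\le \| \partial_x^\alpha (G_f^{11}\ast u_{10}) \|_{L^2} + \| \partial_x^\alpha (G_f^{12}\ast E_{10}) \|_{L^2}\\
	&\le Ce^{-\frac{t}{2}}\| \partial_x^\alpha u_{10}\|_{L^2} + Ce^{-\frac{t}{2}}\big( \| \partial_x^\alpha E_{10}\|_{L^2} + \| \partial_x^{\alpha+1} E_{10}\|_{L^2}\big),
\emas
which is the first claimed bound. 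In the same way, $E_1(t,\cdot) = G_f^{21}(t,\cdot)\ast u_{10} + G_f^{22}(t,\cdot)\ast E_{10}$, and the third and first estimates of Lemma~\ref{Gf-convolution} yield
\bmas
	\| \partial_x^\alpha E_1 \|_{L^2}
	&\le \| \partial_x^\alpha (G_f^{21}\ast u_{10}) \|_{L^2} + \| \partial_x^\alpha (G_f^{22}\ast E_{10}) \|_{L^2}\\
	&\le Ce^{-\frac{t}{2}}\| \partial_x^{(\alpha-1)_+} u_{10}\|_{L^2} + Ce^{-\frac{t}{2}}\| \partial_x^\alpha E_{10}\|_{L^2},
\emas
giving the second claimed bound.

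The only things to watch are bookkeeping: the off-diagonal entry $G_f^{12}$ carries a factor $|k|$ at high frequency (cf. Proposition~\ref{upper-fEM}), which is exactly the source of the extra derivative on $E_{10}$ in the estimate for $u_1$; conversely $G_f^{21}$ gains a factor $|k|^{-1}$ at high frequency, which is recorded by the exponent $(\alpha-1)_+$ in the estimate for $E_1$. One should also note that the hypotheses of Lemma~\ref{Gf-convolution} require $u_{10}$, $E_{10}$ and the relevant derivatives to lie in $L^2(\mathbb{R})$, which is automatic as soon as the initial data are taken in a sufficiently high-order Sobolev space relative to $\alpha$. There is no genuine obstacle here; the whole content of the corollary is already contained in Propositions~\ref{f-Green} and \ref{upper-fEM} and in Lemma~\ref{Gf-convolution}, and the proof is simply the assembly of these pieces.
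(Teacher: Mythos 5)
Your proof is correct and follows exactly the route the paper intends: the corollary is the immediate assembly of the solution representation $U_1=\hat G_f^{-1}$-type formula from Proposition~\ref{f-Green} (i.e. $u_1=G_f^{11}\ast u_{10}+G_f^{12}\ast E_{10}$, $E_1=G_f^{21}\ast u_{10}+G_f^{22}\ast E_{10}$) with the entrywise convolution bounds of Lemma~\ref{Gf-convolution}, which is why the paper states it without further proof. Your bookkeeping of the extra derivative on $E_{10}$ (from the $|k|$ growth of $\hat G_f^{12}$) and of the exponent $(\alpha-1)_+$ (from the $|k|^{-1}$ gain of $\hat G_f^{21}$) matches the paper's statement precisely.
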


\begin{lem} \label{Ge-convolution}
Assume that vector function $g=(g_1(x),g_2(x))$, where $g_1(x),g_2(x)$ belong to $L^1(\mathbb{R})\cap L^2(\mathbb{R})$ and their derivatives belong to $L^2(\mathbb{R})$. Then, for any integer $\alpha \ge 0$ and $l\ge 0$, we have
\bma
	&\| \partial_x^\alpha (G_e^{11}\ast g)\|_{L^2} \le C(1+t)^{-\frac54-\frac{\alpha}{2}}\| g\|_{L^1\cap L^2} + C(1+t)^{-\frac{l+2}{2}}\| \partial_x^{\alpha+l}g\|_{L^2},\\
	&\| \partial_x^\alpha (G_e^{22}\ast g)\|_{L^2} \le C(1+t)^{-\frac54-\frac{\alpha}{2}}\| g\|_{L^1\cap L^2} + C(1+t)^{-\frac{l}{2}}\| \partial_x^{\alpha+l}g\|_{L^2} \label{Ge-22}\\
	&\| \partial_x^\alpha (G_e^{33}\ast g)\|_{L^2} \le C(1+t)^{-\frac14-\frac{\alpha}{2}}\| g\|_{L^1\cap L^2} + C(1+t)^{-\frac{l}{2}}\| \partial_x^{\alpha+l}g\|_{L^2},\\
	&\| \partial_x^\alpha (G_e^{12}\ast g)\|_{L^2},\| \partial_x^\alpha (G_e^{21}\ast g)\|_{L^2} \le  C(1+t)^{-\frac54-\frac{\alpha}{2}}\| g\|_{L^1\cap L^2} + C(1+t)^{-\frac{l+1}{2}}\| \partial_x^{\alpha+l}g\|_{L^2},\\
	&\| \partial_x^\alpha (G_e^{13}\ast g)\|_{L^2}, \| \partial_x^\alpha (G_e^{31}\ast g)\|_{L^2} \le  C(1+t)^{-\frac34-\frac{\alpha}{2}}\| g\|_{L^1\cap L^2} + C(1+t)^{-\frac{l+1}{2}}\| \partial_x^{\alpha+l}g\|_{L^2},\\
	&\| \partial_x^\alpha (G_e^{23}\ast g)\|_{L^2},\| \partial_x^\alpha (G_e^{32}\ast g)\|_{L^2} \le C(1+t)^{-\frac34-\frac{\alpha}{2}}\| g\|_{L^1\cap L^2} + C(1+t)^{-\frac{l}{2}}\| \partial_x^{\alpha+l}g\|_{L^2}.
\ema
\end{lem}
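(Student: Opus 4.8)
The plan is to decompose each convolution in Fourier space and exploit the pointwise bounds on $\hat{G}_e(t,k)$ established in Proposition \ref{upper_eEM}, treating the three frequency regimes $|k|<\epsilon$, $\epsilon\le|k|\le R$, and $|k|>R$ separately. By Plancherel's theorem,
\bmas
	\|\partial_x^\alpha(G_e^{mn}\ast g)\|_{L^2}^2 = \int_{|k|<\epsilon} |k|^{2\alpha}|\hat{G}_e^{mn}|^2|\hat g|^2\,dk + \int_{\epsilon\le|k|\le R}(\cdots)\,dk + \int_{|k|>R}(\cdots)\,dk,
\emas
where here $\hat G_e^{mn}$ abusively denotes any of the four scalar entries of the $2\times 2$ block indexed by $(m,n)$ (all four share the same size bound from Proposition \ref{upper_eEM}). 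The middle region contributes only an exponentially decaying term $Ce^{-ct}\|g\|_{L^2}^2$, which is dominated by any polynomial rate, so the whole game is in the low- and high-frequency regions.

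For the low-frequency region $|k|<\epsilon$, I would use the heat-kernel-type factor $e^{-k^2t/2}$ together with the polynomial weight in $|k|$ coming from the relevant entry of the first matrix in Proposition \ref{upper_eEM}. The key elementary estimate is the standard one: for $\sigma\ge 0$,
\bmas
	\int_{|k|<\epsilon} |k|^{2\sigma}e^{-k^2 t}|\hat g(k)|^2\,dk \le \|\hat g\|_{L^\infty}^2\int_{\R}|k|^{2\sigma}e^{-k^2t}\,dk \le C(1+t)^{-\sigma-\frac12}\|g\|_{L^1}^2,
\emas
using $\|\hat g\|_{L^\infty}\le C\|g\|_{L^1}$. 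Thus the $(3,3)$ block, whose low-frequency size is $O(1)$, yields the rate $(1+t)^{-1/2-\alpha}$ after accounting for the derivative weight $|k|^{2\alpha}$, i.e. $(1+t)^{-1/4-\alpha/2}$ in $L^2$; the blocks $(1,3),(3,1),(2,3),(3,2)$ carry a factor $|k|$ giving an extra $(1+t)^{-1/2}$, hence $(1+t)^{-3/4-\alpha/2}$; and the blocks $(1,1),(2,2),(1,2),(2,1)$ carry $|k|^2$ giving $(1+t)^{-5/4-\alpha/2}$. The second matrix in the low-frequency bound has the factor $e^{-t/8}$, which again is exponentially small and absorbed. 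For the high-frequency region $|k|>R$ I would instead use $e^{-t/2}$ (from the first high-frequency matrix) or $e^{-t/(4k^2)}$ (from the second) and the negative powers of $|k|$ in the corresponding entries. Since on $|k|>R$ we have $|k|^{-s}\le R^{-s}$, the first matrix contributes $Ce^{-t}\|\partial_x^{\alpha}g\|_{L^2}^2$; for the second matrix, whose slow factor is $e^{-t/(4k^2)}$ and only has non-decaying entries in the $(2,2),(2,3),(3,2),(3,3)$ positions, I bound
\bmas
	\int_{|k|>R}|k|^{2\alpha}e^{-t/(2k^2)}|\hat g|^2\,dk \le \sup_{|k|>R}\Big(|k|^{-2l}e^{-t/(2k^2)}\Big)\int_{|k|>R}|k|^{2(\alpha+l)}|\hat g|^2\,dk,
\emas
and since $x^{-l}e^{-t/(2x)}\le C(1+t)^{-l}$ for $x=k^2\ge R^2$, this gives $C(1+t)^{-l}\|\partial_x^{\alpha+l}g\|_{L^2}^2$, matching the stated $(1+t)^{-l/2}$ in $L^2$; the entries carrying an extra $|k|^{-1}$ or $|k|^{-2}$ improve this to $(1+t)^{-(l+1)/2}$ or $(1+t)^{-(l+2)/2}$, which is how the differing second terms across the seven inequalities arise.

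The main obstacle — really a bookkeeping obstacle rather than a conceptual one — is tracking, entry by entry, which power of $|k|$ multiplies which exponential factor in the three regimes of Proposition \ref{upper_eEM}, and then verifying that for each of the seven displayed estimates the worst case over the two low-frequency terms and the two high-frequency terms produces exactly the claimed pair of rates. One has to be careful that the ``non-decaying'' entries of the second high-frequency matrix (those with no negative power of $|k|$) really do sit in positions $(2,2),(2,3),(3,2),(3,3)$, so that the $(1,1)$ entry picks up $|k|^{-2}$ and hence the faster $(1+t)^{-(l+2)/2}$ rate in \eqref{Ge-22}'s analogue for $G_e^{11}$, while $G_e^{22},G_e^{33},G_e^{23},G_e^{32}$ only get $(1+t)^{-l/2}$. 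Once this table is laid out, each case reduces to the two elementary lemma-type integrals displayed above, and combining the regime contributions finishes the proof.
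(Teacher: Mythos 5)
Your proposal is correct and takes essentially the same route as the paper's proof: Plancherel's theorem, splitting into the three frequency regimes of Proposition \ref{upper_eEM}, the low-frequency Gaussian integral combined with $\|\hat g\|_{L^\infty}\le C\|g\|_{L^1}$ (and the $L^2$ norm for bounded times), and the high-frequency bound via $\sup_{|k|>R}\big(|k|^{-2l}e^{-t/(2k^2)}\big)\le C(1+t)^{-l}$, with the entrywise powers of $|k|$ accounting for the differing second terms. The paper simply carries this out explicitly for the $G_e^{13}$ entry and notes the others are analogous, exactly as your bookkeeping table would do.
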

\begin{proof}
For simplicity, we only prove the estimate of $\partial_x^\alpha (G_e^{13}\ast g).$ By Plancherel's theorem, we obtain
\bmas
	\| \partial_x^\alpha (G_e^{13}\ast g)\|_{L^2}^2 &=
	\int_{|k|\le \epsilon}| (ik)^\alpha \hat{G}_e^{13}\hat{g}|^2\,dk + \int_{\epsilon\le|k|\le R}| (ik)^\alpha \hat{G}_e^{13}\hat{g}|^2\,dk + \int_{|k|\ge R}| (ik)^\alpha \hat{G}_e^{13}\hat{g}|^2\,dk\\
	&:=J_1 + J_2 +J_3.
\emas
Firstly, we estimate the term $J_3$. From Proposition \ref{upper_eEM},
\bmas
	J_3 &\le C\int_{|k|\ge R} |k|^{2\alpha}|k|^{-2} e^{-t}|\hat{g}|^2\,dk + C\int_{|k|\ge R} |k|^{2\alpha} |k|^{-2} e^{-\frac{t}{2k^2}}|\hat{g}|^2\,dk\\
	&\le Ce^{-t}\int_{|k|\ge R}|k|^{2(\alpha-1)}|\hat{g}|^2\,dk + C\sup_{|k| > R} \left( |k|^{-2l-2}e^{-\frac{t}{2k^2}}\right)\int_{|k|\ge R} |k|^{2(\alpha+l)}|\hat{g}|^2\,dk\\
	&\le  C(1+t)^{-(l+1)}\left\| \partial_x^{\alpha+l}g\right\|_{L^2}^2,
\emas
where we use the fact that
$$
\sup_{|k| > R} \left( |k|^{-l}e^{-\frac{t}{2k^2}}\right) \le C(1+t)^{-\frac{l}{2}},\quad \forall l\ge 0.
$$
Next, for $J_1$, we have
\bmas
	J_1 &\le C\int_{|k|\le\epsilon}|k|^{2(\alpha+1)} e^{-k^2t}|\hat{g}|^2\,dk + C\int_{|k|\le\epsilon} |k|^{2(\alpha+1)} e^{-\frac{t}{4}}|\hat{g}|^2\,dk \\
	&\le C\sup_{|k| \le \eps} |\hat{g} |^2\int_{|k|\le\epsilon}|k|^{2(\alpha+1)} e^{-k^2t}\,dk + Ce^{-\frac{t}{4}}\int_{|k|\le\epsilon} |\hat{g}|^2\,dk \\
&\le C(1+t)^{-\frac32-\alpha}\| g\|_{L^1\cap L^2}^2.
\emas
For $J_2$, we have
$$
	J_2 \le C\int_{\epsilon\le |k| \le R}|k|^{2\alpha} e^{-2ct}|\hat{g}|^2\,dk \le Ce^{-2ct}\| g\|_{L^2}^2 .
$$
Thus,
$$
	\| \partial_x^\alpha (G_e^{13}\ast g)\|_{L^2} \le C(1+t)^{-\frac34-\frac{\alpha}{2}}\| g\|_{L^1\cap L^2} + C(1+t)^{-\frac{l+1}{2}}\| \partial_x^{\alpha+l} g\|_{L^2}.
$$
The proof is completed.
\end{proof}

\begin{lem} \label{Ge-lowerbound}Let $\alpha\ge 0$ be an integer.
	 Assume that vector function $g=(g_1(x),g_2(x))$  with $g_1(x), g_2(x)$ belong to $
	 H^{2\alpha+3}(\mathbb{R})\cap L^1(\mathbb{R})$ and there exists a constant $d_0>0$ such that $\inf_{|k|\le \epsilon}|\hat{g}(k)|\ge d_0$. Then, for $t> 0$ being large enough, it holds
	\bma
	C_1(1+t)^{-\frac54-\frac{\alpha}{2}} &\le \| \partial_x^\alpha(G_e^{ij} \ast g)\|_{L^2} \le C_2(1+t)^{-\frac54-\frac{\alpha}{2}}, \quad i,j=1,2, \label{Ge-lb-1}\\
	C_1(1+t)^{-\frac34-\frac{\alpha}{2}} &\le \| \partial_x^\alpha(G_e^{i3} \ast g)\|_{L^2},~\| \partial_x^\alpha(G_e^{3i} \ast g)\|_{L^2} \le C_2(1+t)^{-\frac34-\frac{\alpha}{2}}, \quad i=1,2, \label{Ge-lb-2} \\
	C_1(1+t)^{-\frac14-\frac{\alpha}{2}} &\le \|\partial_x^\alpha(G_e^{33} \ast g)\|_{L^2} \le C_2(1+t)^{-\frac14-\frac{\alpha}{2}},\label{Ge-lb-3}
	\ema
	where  $C_2\ge C_1 >0$ are two constants.
\end{lem}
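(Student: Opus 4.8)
The plan is to read off the upper bounds directly from Lemma \ref{Ge-convolution} and to prove the matching lower bounds by isolating, at low frequency, the single slowly-decaying mode $\lambda_1$ (together with its conjugate $\lambda_4=\overline{\lambda_1}$) that governs the decay of $\hat G_e$.

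For the upper bounds I would apply Lemma \ref{Ge-convolution} with $l=\alpha+3$. Since $g_1,g_2\in H^{2\alpha+3}(\mathbb{R})$, the quantity $\|\partial_x^{\alpha+l}g\|_{L^2}=\|\partial_x^{2\alpha+3}g\|_{L^2}$ is finite, and for this choice every power $l/2$, $(l+1)/2$, $(l+2)/2$ occurring on the right-hand sides of Lemma \ref{Ge-convolution} equals $\tfrac\alpha2+\tfrac32$, $\tfrac\alpha2+2$, $\tfrac\alpha2+\tfrac52$ respectively, each $\ge\tfrac54+\tfrac\alpha2$ and hence at least the exponent in the corresponding asserted rate. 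Thus every regularity term is absorbed and the first term already carries the claimed rate, giving the right-hand inequalities in \eqref{Ge-lb-1}--\eqref{Ge-lb-3} with $C_2$ depending on $\|g\|_{L^1\cap H^{2\alpha+3}}$.

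For the lower bounds, by Plancherel it suffices to bound the low-frequency part: $\|\partial_x^\alpha(G_e^{ij}\ast g)\|_{L^2}^2\ge\int_{|k|\le\epsilon}|k|^{2\alpha}|\hat G_e^{ij}(t,k)\hat g(k)|^2\,dk$. On $|k|\le\epsilon$, using Proposition \ref{e-Green} and Lemma \ref{eEM-eigen-expansion}, decompose $\hat G_e^{ij}=P_1^{ij}e^{\lambda_1 t}+P_4^{ij}e^{\lambda_4 t}+\mathcal R^{ij}$, where $\mathcal R^{ij}$ collects the $j=2,3,5,6$ terms; since $\mathrm{Re}\,\lambda_m\le-\tfrac18$ there for $m\in\{2,3,5,6\}$ and the matrices $P_m^{ij}$ are bounded near $k=0$ (the products $\prod_{n\ne m}(\lambda_m-\lambda_n)$ are bounded away from zero, using Property \ref{property-nonzero}), one gets $|\mathcal R^{ij}(t,k)|\le Ce^{-t/8}$ uniformly on $|k|\le\epsilon$. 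The decisive algebraic fact is the structure of the leading coefficients: from \eqref{Pj}--\eqref{Pl} together with $\lambda_1=-(1+i)k^2+O(k^4)$ one checks that, for real $k$ near $0$,
\[
P_1^{ij}(k)=k^{m}\,b_{ij}(k)\,(\mathbb{I}_2+i\mathbb{O}_1),\qquad P_4^{ij}(k)=\pm\,\overline{k^{m}b_{ij}(k)}\,(\mathbb{I}_2-i\mathbb{O}_1),
\]
with $b_{ij}$ analytic and non-vanishing in a neighbourhood of $k=0$, and $m=0$ for $(i,j)=(3,3)$, $m=1$ for $(i,j)\in\{(1,3),(2,3),(3,1),(3,2)\}$, and $m=2$ for $i,j\in\{1,2\}$. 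Writing $z:=k^{m}b_{ij}(k)e^{\lambda_1 t}$ and using the explicit form of $\mathbb{I}_2\pm i\mathbb{O}_1$, the leading part equals $2\,\mathrm{Re}(z)\mathbb{I}_2\mp2\,\mathrm{Im}(z)\mathbb{O}_1$ or $2i\big((\mathrm{Im}\,z)\mathbb{I}_2+(\mathrm{Re}\,z)\mathbb{O}_1\big)$, which in either case is $2|z|$ times an orthogonal rotation; hence it acts on $\hat g(k)$ with no loss of norm, so that $|P_1^{ij}e^{\lambda_1 t}\hat g+P_4^{ij}e^{\lambda_4 t}\hat g|=2|k|^{m}|b_{ij}(k)|\,e^{\mathrm{Re}\,\lambda_1 t}\,|\hat g(k)|$, with no cancellation between the $\lambda_1$ and $\overline{\lambda_1}$ contributions.

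To conclude, use $\mathrm{Re}\,\lambda_1=-k^2+O(k^4)\ge-2k^2$ on $|k|\le\epsilon$ (shrinking $\epsilon$ if needed), $|b_{ij}|\ge c>0$ near $0$, $|\hat g(k)|\ge d_0$ on $|k|\le\epsilon$, $|\hat g(k)|\le(2\pi)^{-1/2}\|g\|_{L^1}$, and the elementary inequality $|a+b|^2\ge\tfrac12|a|^2-|b|^2$, to obtain for $t$ large
\[
\int_{|k|\le\epsilon}|k|^{2\alpha}|\hat G_e^{ij}\hat g|^2\,dk\ \ge\ c\,d_0^2\int_{|k|\le\epsilon}|k|^{2\alpha+2m}e^{-4k^2t}\,dk\ -\ Ce^{-t/4}\|g\|_{L^2}^2 .
\]
Rescaling $k\mapsto k/\sqrt t$ shows the Gaussian integral is $\ge c'(1+t)^{-\frac{2\alpha+2m+1}{2}}$, which dominates the exponentially small term once $t$ is large; hence $\|\partial_x^\alpha(G_e^{ij}\ast g)\|_{L^2}\ge C_1 d_0(1+t)^{-\frac{2\alpha+2m+1}{4}}$, and the values $m=2,1,0$ give precisely the lower bounds $(1+t)^{-\frac54-\frac\alpha2}$, $(1+t)^{-\frac34-\frac\alpha2}$, $(1+t)^{-\frac14-\frac\alpha2}$ in \eqref{Ge-lb-1}, \eqref{Ge-lb-2}, \eqref{Ge-lb-3}. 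I expect the main obstacle to be exactly the structural computation above: extracting the leading low-frequency coefficient of every relevant $2\times2$ block of $\hat G_e$ from \eqref{Pj}--\eqref{Pl}, verifying that it is a non-vanishing multiple of $\mathbb{I}_2\pm i\mathbb{O}_1$ of the correct order $k^{m}$, and checking that the $\lambda_1$ and $\overline{\lambda_1}$ modes reinforce (producing the clean factor $2|z|$) rather than cancel; once this rotation structure is in hand, the remaining low-frequency Gaussian estimate is routine.
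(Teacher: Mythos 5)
Your proposal is correct and follows essentially the same route as the paper: upper bounds read off from Lemma \ref{Ge-convolution}, and lower bounds obtained by isolating the conjugate pair $\lambda_1,\lambda_4$ at low frequency, using the $\mathbb{I}_2\pm i\mathbb{O}_1$ structure of $P_1^{ij},P_4^{ij}$ to see that their sum acts as a nonnegative scalar of order $|k|^m e^{\mathrm{Re}\lambda_1 t}$ times a rotation (the paper's ``rotation transform'' step), followed by the Gaussian rescaling with $|\hat g|\ge d_0$ and absorption of the exponentially small $\lambda_2,\lambda_3,\lambda_5,\lambda_6$ remainder. Your uniform bookkeeping of the orders $m=2,1,0$ for the respective blocks matches the paper's case-by-case treatment (done explicitly only for $G_e^{12}$ there), so no gap.
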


\begin{proof}

Firstly, from Lemma \ref{Ge-convolution}, if the the upper bounds of \eqref{Ge-lb-1}--\eqref{Ge-lb-3} hold, we need
$$
\begin{cases}
	g(x) \in H^{2\alpha+1}(\R)\cap L^1(\R), \quad i,j=1,3,\\
	g(x) \in H^{2\alpha+3}(\R)\cap L^1(\R), \quad i=j=2,\\
    g(x) \in H^{2\alpha+2}(\R)\cap L^1(\R),\quad other ~cases.
\end{cases}
$$

By Lemma \ref{Ge-convolution}, we only need to show the lower bounds. For $i,j=1,2,3,$ it holds that
\bmas
	\| \partial_x^\alpha(G_e^{ij}\ast g)\|_{L^2}^2 &=
	\int_{|k| \le \epsilon} |(ik)^\alpha\hat{G}_e^{ij}\hat{g}|^2\,dk +  \int_{ |k| > \epsilon} |(ik)^\alpha\hat{G}_e^{ij}\hat{g}|^2\,dk \\
	&\ge \int_{|k| \le \epsilon}\bigg[\frac{1}{2}|(ik)^\alpha(P_1^{ij}e^{\lambda_1t}+P_4^{ij}e^{\lambda_4t})\hat{g}|^2 - \epsilon^{2\alpha} \sum_{l=2,3,5,6}|  P_l^{ij}e^{\lambda_lt} \hat{g}|^2\bigg]\,dk \\
	&\ge \frac{1}{2}\int_{|k| \le \epsilon}k^{2\alpha}|(P_1^{ij}e^{\lambda_1t}+P_4^{ij}e^{\lambda_4t})\hat{g}|^2 \,dk- Ce^{-\frac{t}{4}},
\emas
where $P^{ij}_l$, $l=1,2,\cdots,6$ is the element of the matrix $P_l$.

It is easy to see that
\bmas
	P_1^{ij}e^{\lambda_1t}+P_4^{ij}e^{\lambda_4t} = (P_1^{ij}+P_4^{ij})e^{{\rm Re}\lambda_1t}\cos{({\rm Im}\lambda_1t)} + i(P_1^{ij}-P_4^{ij})e^{{\rm Re}\lambda_1t}\sin{({\rm Im}\lambda_1t)}.
\emas
For simplicity, we only estimate $\| G_e^{12}\ast g\|_{L^2}$. Recalling \eqref{Pj}--\eqref{Pl}, we have
\bmas
	&P_1^{12} = -\frac{\lambda_1}{2(\lambda_1-\lambda_2)(\lambda_1-\lambda_3)}(\mathbb{I}_2+i\mathbb{O}_1) :=p_1^{12}(\mathbb{I}_2+i\mathbb{O}_1),\\
	&P_4^{12} = -\frac{\lambda_4}{2(\lambda_4-\lambda_5)(\lambda_4-\lambda_6)}(\mathbb{I}_2-i\mathbb{O}_1) :=p_4^{12}(\mathbb{I}_2-i\mathbb{O}_1).
\emas
Since $\lambda_2+\lambda_3 = -(\lambda_1+1+i)$ and $\lambda_2\lambda_3 = -\lambda_1(\lambda_2+\lambda_3) + 1+k^2$, it follows that
$$
	(\lambda_1-\lambda_2)(\lambda_1-\lambda_3) = \lambda_1^2+2\lambda_1(\lambda_1+1+i)+1+k^2,
$$
Similarly,
$$
	(\lambda_4-\lambda_5)(\lambda_4-\lambda_6) = \lambda_4^2+2\lambda_4(\lambda_4+1-i)+1+k^2.
$$
For $|k|\le \epsilon$, from Lemma \ref{eEM-eigen-expansion}, we have
\bmas
	&p_1^{12} +p_4^{12} =-\frac{(1+k^2)(\lambda_1+\lambda_4) + |\lambda_1|^2[3(\lambda_1+\lambda_4)+4]}{2|\lambda_1-\lambda_2|^2|\lambda_1-\lambda_3|^2} = \frac{k^2 + O(k^4)}{|\lambda_1-\lambda_2|^2|\lambda_1-\lambda_3|^2} :=p_+^{12},\\
	&p_1^{12} -p_4^{12} =-\frac{(1+k^2)(\lambda_1-\lambda_4) + |\lambda_1|^2[3(\lambda_4-\lambda_1)-4i]}{2|\lambda_1-\lambda_2|^2|\lambda_1-\lambda_3|^2} = i\frac{k^2 + O(k^4)}{|\lambda_1-\lambda_2|^2|\lambda_1-\lambda_3|^2} :=ip_-^{12}.
\emas
Here, $p_{+}^{12}$ and $p_{-}^{12}$ are two real functions.
Therefore,
$$
	P_1^{12} + P_4^{12} = p_+^{12}\mathbb{I}_2 - p_-^{12}\mathbb{O}_1,\quad 	P_1^{12} - P_4^{12} =i( p_-^{12}\mathbb{I}_2 + p_+^{12}\mathbb{O}_1).
$$
Thus, we have
\bmas
	&\quad \int_{|k| \le \epsilon}k^{2\alpha} \left|\left( P_1^{12}e^{\lambda_1t} + P_4^{12}e^{\lambda_4t}\right)\hat{g}\right|^2\,dk\\
	&=\int_{|k| \le \epsilon}k^{2\alpha}\left| (p_+^{12}\mathbb{I}_2 - p_-^{12}\mathbb{O}_1)\hat{g}\cos({\rm Im}\lambda_1t) - ( p_-^{12}\mathbb{I}_2+p_+^{12}\mathbb{O}_1 )\hat{g}\sin({\rm Im}\lambda_1t) \right|^2e^{2{\rm Re}\lambda_1t}\,dk\\
	&=\int_{|k|\le \epsilon}k^{2\alpha}e^{2{\rm Re}\lambda_1t}\left[(p_+^{12})^2+(p_-^{12})^2\right]\left| \cos(\theta+{\rm Im}\lambda_1t)\hat{g} -\sin(\theta+{\rm Im}\lambda_1t)\mathbb{O}_1\hat{g}\right|^2\,dk,
\emas
where $\cos\theta= \frac{p_+^{12}}{\sqrt{(p_+^{12})^2+(p_-^{12})^2}}$ and $\sin\theta= \frac{p_-^{12}}{\sqrt{(p_+^{12})^2+(p_-^{12})^2}}$. Since
\bmas
	\cos(\theta+{\rm Im}\lambda_1t)\hat{g} - \sin(\theta+{\rm Im}\lambda_1t)\mathbb{O}_1\hat{g} = \left( \ba \cos(\theta+{\rm Im}\lambda_1t) & \sin(\theta+{\rm Im}\lambda_1t) \\ -\sin(\theta+{\rm Im}\lambda_1t) & \cos(\theta+{\rm Im}\lambda_1t)  \ea \right)
	\left( \ba \hat{g}_1 \\ \hat{g}_2 \ea \right)
\emas
is a rotation transform of the vector $\hat{g}=(\hat{g}_1,\hat{g}_2)^{T}$, it follows that
\bmas
	&\quad \int_{|k| \le \epsilon} k^{2\alpha}\left|( P_1^{12}e^{\lambda_1t} + P_4^{12}e^{\lambda_4t})\hat{g}\right|^2\,dk \\
	&= \int_{|k|\le \epsilon} k^{2\alpha}e^{2{\rm Re}\lambda_1t}\left[(p_+^{12})^2 + (p_-^{12})^2\right]|\hat{g}|^2\,dk\\
	&\ge \int_{|k|\le \epsilon} \frac{k^{2\alpha}e^{2{\rm Re}\lambda_1t}}{|\lambda_1-\lambda_2|^4|\lambda_1-\lambda_3|^4}\left[ k^4 -O(k^6) \right] |\hat{g}|^2\,dk\\
	&\ge C'\int_{|k|\le \epsilon} e^{-3k^2t}k^{4+2\alpha}|\hat{g}|^2\,dk - C''\int_{|k|\le \epsilon} e^{-\frac{k^2}{2}t}k^{6+2\alpha}|\hat{g}|^2\,dk ,
\emas
where we use the following estimate
\bmas
	&|\lambda_1-\lambda_j| \ge s_0, \quad j=2,3,~~|k| \le \epsilon,\\
	&{\rm Re}\lambda_1 = -k^2(1+O(k^2)) \ge -\frac32k^2, \quad |k|\le \epsilon.
\emas
And it holds for $t\ge t_0 =\frac{L^2}{\epsilon^2}$ with the constant $L > 1$ that
\bmas
	\int_{|k|\le \epsilon} e^{-3k^2t}k^{4+2\alpha}|\hat{g}|^2\,dk&\ge d_0^2t^{-\frac{5}{2}-\alpha}\int_0^{\epsilon t^{1/2}} r^{4+2\alpha}e^{-3r^2}\,dr \\
&\ge d_0^2t^{-\frac{5}{2}-\alpha}\int_0^L r^{4+2\alpha}e^{-3r^2} \,dr \ge \frac{ d_0^2L^{5+2\alpha}e^{-3L^2}}{5+2\alpha}t^{-\frac{5}{2}-\alpha}.
\emas
Summarizing above, for $t >0$ being large enough, we obtain that
$$
	\| G_e^{12} \ast g \|_{L^2}^2 \ge C'''(1+t)^{-\frac{5}{2}-\alpha} - C''(1+t)^{-\frac{7}{2}-\alpha} - Ce^{-\frac{t}{4}} \ge C_1(1+t)^{-\frac{5}{2}-\alpha}.
$$
The other terms can be proved similarly, and we omit the details. Then we complete this proof.
\end{proof}

\begin{cor} \label{Ur_linear_time}
	Suppose $U_r = (u_{r},E_{r},B_{r})$ is the solution of Cauchy problem \eqref{eEM}. 	Then, for any integer $\alpha \ge 0$ and $l\ge 0$, we have the following time decay rates:
	\bmas
	\| \partial_x^\alpha u_r\|_{L^2} &\le C(1+t)^{-\frac34-\frac{\alpha}{2}}\| (u_{r0},E_{r0},B_{r0})\|_{L^1\cap L^2} + C(1+t)^{-\frac{l+1}{2}}\| \partial_x^{\alpha+l}(u_{r0},E_{r0},B_{r0})\|_{L^2},\\ 
	\| \partial_x^\alpha E_r\|_{L^2} &\le C(1+t)^{-\frac34-\frac{\alpha}{2}}\| (u_{r0},E_{r0},B_{r0})\|_{L^1\cap L^2} + C(1+t)^{-\frac{l}{2}}\| \partial_x^{\alpha+l}(u_{r0},E_{r0},B_{r0})\|_{L^2},\\
	\| \partial_x^\alpha B_r\|_{L^2} &\le C(1+t)^{-\frac14-\frac{\alpha}{2}}\| (u_{r0},E_{r0},B_{r0})\|_{L^1\cap L^2} + C(1+t)^{-\frac{l}{2}}\| \partial_x^{\alpha+l}(u_{r0},E_{r0},B_{r0})\|_{L^2},
	\emas
where $C$ is a positive constant.
 Moreover, if $u_{r0}\in H^{2\alpha+2}(\R)\cap L^1(\R)$ and $E_{r0}\in H^{2\alpha+3}(\R)\cap L^1(\R)$, there exists a constant $d_0>0$ such that $\inf_{|k|\le \epsilon}|\hat{B}_{r0}(k)| \ge d_0$, then, it holds that
$$
	\| \dxa u_r\|_{L^2} \ge C_1(1+t)^{-\frac34-\frac{\alpha}{2}},~\| \dxa E_r\|_{L^2} \ge C_1(1+t)^{-\frac34-\frac{\alpha}{2}},~\| \dxa B_r\|_{L^2} \ge C_1(1+t)^{-\frac14-\frac{\alpha}{2}},
$$
for $t>0$ is sufficiently large and $C_1$ is a positive constant.
\end{cor}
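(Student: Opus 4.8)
My plan is to read the solution off the Green's function constructed in Proposition \ref{e-Green} and then feed each resulting convolution into Lemmas \ref{Ge-convolution} and \ref{Ge-lowerbound}. Since $\hat U_r(t,k)=\hat G_e(t,k)\hat U_{r0}(k)$, inverting the Fourier transform block by block gives
\bmas
u_r &= G_e^{11}\ast u_{r0} + G_e^{12}\ast E_{r0} + G_e^{13}\ast B_{r0},\\
E_r &= G_e^{21}\ast u_{r0} + G_e^{22}\ast E_{r0} + G_e^{23}\ast B_{r0},\\
B_r &= G_e^{31}\ast u_{r0} + G_e^{32}\ast E_{r0} + G_e^{33}\ast B_{r0},
\emas
where each $G_e^{ij}$ is a $2\times 2$ matrix kernel acting on the relevant $2$-vector. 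Every claim in the corollary is then a statement about these nine terms.

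For the upper bounds I would apply Lemma \ref{Ge-convolution} to each of the three pieces making up $u_r$, $E_r$, $B_r$ and keep the slowest-decaying one. For $u_r$: $G_e^{11}\ast u_{r0}$ and $G_e^{12}\ast E_{r0}$ decay like $(1+t)^{-5/4-\alpha/2}$ at low frequency while $G_e^{13}\ast B_{r0}$ only like $(1+t)^{-3/4-\alpha/2}$, and the worst high-frequency (derivative-loss) exponent among the three is $(l+1)/2$; this is exactly the stated $u_r$ bound. The same bookkeeping — dominant term $G_e^{23}\ast B_{r0}$, high-frequency exponent $l/2$ — gives the $E_r$ bound, and dominant term $G_e^{33}\ast B_{r0}$, with low-frequency rate $(1+t)^{-1/4-\alpha/2}$ and high-frequency exponent $l/2$, gives the $B_r$ bound.

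For the lower bounds I would use the reverse triangle inequality, isolating in each line the single term carrying $B_{r0}$, because that term has the slowest low-frequency behaviour and the spectral hypothesis $\inf_{|k|\le\epsilon}|\hat B_{r0}(k)|\ge d_0$ supplies a matching lower bound through Lemma \ref{Ge-lowerbound}. For instance,
\[
\|\partial_x^\alpha u_r\|_{L^2}\ \ge\ \|\partial_x^\alpha(G_e^{13}\ast B_{r0})\|_{L^2}-\|\partial_x^\alpha(G_e^{11}\ast u_{r0})\|_{L^2}-\|\partial_x^\alpha(G_e^{12}\ast E_{r0})\|_{L^2}.
\]
Lemma \ref{Ge-lowerbound} gives $\|\partial_x^\alpha(G_e^{13}\ast B_{r0})\|_{L^2}\ge C_1(1+t)^{-3/4-\alpha/2}$, while Lemma \ref{Ge-convolution} bounds the two subtracted terms by $C(1+t)^{-5/4-\alpha/2}$ once $l$ is chosen as a function of $\alpha$ so that the derivative-loss pieces are no slower than $(1+t)^{-5/4-\alpha/2}$ — and this is precisely what forces the regularity $u_{r0}\in H^{2\alpha+2}$ and $E_{r0}\in H^{2\alpha+3}$ (the index $2\alpha+3$ being dictated by the $G_e^{22}\ast E_{r0}$ term that enters the $E_r$ lower bound, whose high-frequency exponent is only $l/2$). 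Since $\frac54>\frac34$, for $t$ large the two subtracted terms are absorbed into half of the leading term, so $\|\partial_x^\alpha u_r\|_{L^2}\ge\frac12 C_1(1+t)^{-3/4-\alpha/2}$; the estimates for $E_r$ (isolate $G_e^{23}\ast B_{r0}$) and for $B_r$ (isolate $G_e^{33}\ast B_{r0}$, the remaining terms being $O((1+t)^{-3/4-\alpha/2})$) follow the same pattern, the gap between the two competing rates always being $1/2$.

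No new estimate is needed beyond Lemmas \ref{Ge-convolution} and \ref{Ge-lowerbound}; the only point that requires care is the index bookkeeping — matching the choice of $l$ against the prescribed Sobolev regularity so that the high-frequency derivative-loss terms never dominate, and checking in the lower-bound step that the $B_{r0}$-term strictly outlives the other two terms (a clean $1/2$ gap in the decay exponent), which is what makes the absorption legitimate for all sufficiently large $t$.
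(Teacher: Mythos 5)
Your proposal is correct and follows essentially the same route as the paper: the upper bounds come from feeding each block of $U_r=G_e\ast U_r(0)$ into Lemma \ref{Ge-convolution} and keeping the slowest term, and the lower bounds come from the reverse triangle inequality isolating the $B_{r0}$-convolution, bounded below by Lemma \ref{Ge-lowerbound} and with the remaining terms absorbed thanks to the $1/2$ gap in decay exponents. Your index bookkeeping (the choice of $l$ forcing $u_{r0}\in H^{2\alpha+2}$, $E_{r0}\in H^{2\alpha+3}$) matches the paper's argument.
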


\begin{proof}
	From $U_r = G_e\ast U_r(0)$ and Lemma \ref{Ge-convolution}, it is easy to obtain the upper bounds on the time decay rates of solutions. For the lower bounds, from Lemma \ref{Ge-lowerbound}, we have
	\bmas
		\| \partial_x^\alpha u_r \|_{L^2} &= \| \dxa G_e^{11}\ast u_{r0} +\dxa  G_e^{12}\ast E_{r0} + \dxa G_e^{13}\ast B_{r0}\|_{L^2}\\
		&\ge \| \dxa G_e^{13}\ast B_{r0}\|_{L^2} - \|\dxa  G_e^{11}\ast u_{r0}\|_{L^2} -\| \dxa G_e^{12}\ast E_{r0}\|_{L^2}\\
		&\ge C_1d_0(1+t)^{-\frac34-\frac{\alpha}{2}} - C_2(1+t)^{-\frac54-\frac{\alpha}{2}}(\| u_{r0}\|_{H^{2\alpha+1}\cap L^1} + \| E_{r0}\|_{H^{2\alpha+2}\cap L^1})\\
		&\ge C_1(1+t)^{-\frac34-\frac{\alpha}{2}}
	\emas
for $t>0$ is sufficiently large. The proof is similar for $E_r$ and $B_r$.
\end{proof}

\setcounter{equation}{0}			
\section{The Original Nonlinear Problem}
			
\subsection{Energy Estimate}
			
In this section, we rewrite the system \eqref{re-em_1} for $W=(\rho,u_{1},u_{r},E_{1},E_{r},B_{r})$ as following:
\bma \label{em-2}
\begin{cases}
	\partial_t \rho + (\rho+1)\partial_x u_1 + u_1\partial_x \rho = 0, \\
	\partial_t u_1  + \frac{P'(\rho +1)}{\rho+1}\partial_x \rho+   E_1 +   u_1 =  - u_1 \partial_x u_1 +  u_r\cdot\mathbb{O}_1 B_r, \\
	\partial_t u_r -\mathbb{O}_1u_r +   u_r +   E_r =  - u_1 \partial_x u_r -  u_1\mathbb{O}_1 B_r,\\
	\partial_tE_1 = (\rho+1)u_1,\\
	\partial_t E_r - \mathbb{O}_1\partial_x B_r =   (\rho+1)u_r, \\
	\partial_t B_r + \mathbb{O}_1\partial_x E_r = 0, \\
	\partial_x E_1 = - \rho,
\end{cases}
\ema
with initial data
\be \label{init}
W|_{t=0}=W(0):=(\rho_0,u_{10},u_{r0},E_{10},E_{r0},B_{r0})^{T},
\ee
satisfying
\be \label{init-compatible}	\partial_x   E_{10}=- \rho_0.\ee
In what follows, we assume that
\be \label{assumption} \sup_{0\le s\le t}\| W (s)\|_{H^N} \le \delta \quad {\rm for }\quad t\in[0,T],\ee
 where $\delta$ is a small positive constant. Under this assumption, we can see that $n=\rho+1\ge const. >0$ and $\left| P'(\rho+1)\right|$ is bounded.
			
For later use, we list some Sobolev inequalities as follows.

\begin{lem}[\cite{H-K}] \label{sobolev-ineq}
Let $1\le p,r,q\le \infty$, and $\frac1p =\frac1r+\frac1q$. Then, we have
\begin{enumerate}
\item[(1)] For $\alpha \ge 0$:
\be
	\| \partial_x^\alpha(uv)\|_{L^p} \le C\| u \|_{L^q}\| \partial_x^\alpha v\|_{L^r} +C\| v\|_{L^q}\| \partial^\alpha u\|_{L^r}.
\ee
\item[(2)] For $\alpha \ge 1$:
\be
	\|\partial_x^\alpha(uv_x)-u\partial_x^\alpha v_x\|_{L^p} \le C\| v_x \|_{L^q}\| \partial_x^\alpha u\|_{L^r} +C\| u_x\|_{L^q}\| \partial_x^\alpha v\|_{L^r}.
\ee
\end{enumerate}
\end{lem}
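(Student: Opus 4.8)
Both estimates are standard Moser-type (Kato--Ponce / Gagliardo--Nirenberg) product and commutator inequalities, and since here $\alpha$ is a nonnegative \emph{integer} the argument can be made completely elementary. The plan is, in both cases, to expand the top-order derivative by the Leibniz rule, to estimate each resulting factor by the one-dimensional Gagliardo--Nirenberg interpolation inequality, to recombine the two factors by H\"older's inequality with a matched pair of exponents, and to finish with the weighted arithmetic--geometric mean inequality $A^{\theta}B^{1-\theta}\le\theta A+(1-\theta)B\le A+B$ for $\theta\in[0,1]$. Concretely, everything reduces to the scalar bound
\[
	\|\partial_x^{j}u\,\partial_x^{\beta-j}v\|_{L^p}\le C\,\|u\|_{L^q}^{1-j/\beta}\|\partial_x^{\beta}u\|_{L^r}^{j/\beta}\,\|v\|_{L^q}^{j/\beta}\|\partial_x^{\beta}v\|_{L^r}^{1-j/\beta},\qquad 0\le j\le\beta,\ \tfrac1p=\tfrac1r+\tfrac1q,
\]
after which summation over $j$ produces the right-hand sides in the Lemma.

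For part (1) I would write $\partial_x^{\alpha}(uv)=\sum_{j=0}^{\alpha}\binom{\alpha}{j}\partial_x^{j}u\,\partial_x^{\alpha-j}v$, and for each $j$ pick H\"older exponents $a_j,b_j\in[1,\infty]$ defined by $\tfrac1{a_j}=\tfrac j\alpha\cdot\tfrac1r+(1-\tfrac j\alpha)\tfrac1q$ and $\tfrac1{b_j}=\tfrac{\alpha-j}{\alpha}\cdot\tfrac1r+\tfrac j\alpha\cdot\tfrac1q$, so that $\tfrac1{a_j}+\tfrac1{b_j}=\tfrac1p$. The Gagliardo--Nirenberg inequality with interpolation parameter $\theta=j/\alpha$ (admissible because $0\le j\le\alpha$) gives $\|\partial_x^{j}u\|_{L^{a_j}}\le C\|u\|_{L^q}^{1-j/\alpha}\|\partial_x^{\alpha}u\|_{L^r}^{j/\alpha}$ and symmetrically $\|\partial_x^{\alpha-j}v\|_{L^{b_j}}\le C\|v\|_{L^q}^{j/\alpha}\|\partial_x^{\alpha}v\|_{L^r}^{1-j/\alpha}$; H\"older then yields the displayed scalar bound with $\beta=\alpha$. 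Grouping $A:=\|\partial_x^{\alpha}u\|_{L^r}\|v\|_{L^q}$ and $B:=\|u\|_{L^q}\|\partial_x^{\alpha}v\|_{L^r}$ and applying the AM--GM step, each summand is $\le A+B$, and summation over $j=0,\dots,\alpha$ closes part (1).

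For part (2) the decisive observation is that the $j=0$ term of the Leibniz expansion of $\partial_x^{\alpha}(uv_x)$ is exactly $u\,\partial_x^{\alpha}v_x$, so it cancels and
\[
	\partial_x^{\alpha}(uv_x)-u\,\partial_x^{\alpha}v_x=\sum_{j=1}^{\alpha}\binom{\alpha}{j}\,\partial_x^{j}u\,\partial_x^{\alpha+1-j}v .
\]
For $1\le j\le\alpha$ I would interpolate $\partial_x^{j}u=\partial_x^{j-1}(u_x)$ between $u_x$ and $\partial_x^{\alpha-1}(u_x)=\partial_x^{\alpha}u$ with parameter $\tfrac{j-1}{\alpha-1}$, and $\partial_x^{\alpha+1-j}v=\partial_x^{\alpha-j}(v_x)$ between $v_x$ and $\partial_x^{\alpha-1}(v_x)=\partial_x^{\alpha}v$ with parameter $\tfrac{\alpha-j}{\alpha-1}$; the matched exponents $\tfrac1{a_j}=\tfrac{j-1}{\alpha-1}\tfrac1r+\tfrac{\alpha-j}{\alpha-1}\tfrac1q$, $\tfrac1{b_j}=\tfrac{\alpha-j}{\alpha-1}\tfrac1r+\tfrac{j-1}{\alpha-1}\tfrac1q$ again sum to $\tfrac1p$. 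H\"older and the AM--GM step bound each summand by $C\big(\|v_x\|_{L^q}\|\partial_x^{\alpha}u\|_{L^r}+\|u_x\|_{L^q}\|\partial_x^{\alpha}v\|_{L^r}\big)$, and summation over $j$ gives the claim. The degenerate case $\alpha=1$ is handled separately and trivially: the commutator then equals $u_xv_x$, bounded by one application of H\"older.

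The genuine difficulties here are purely bookkeeping: one must verify that the interpolation parameters $j/\alpha$ (resp. $(j-1)/(\alpha-1)$) lie in $[0,1]$ and that the paired H\"older exponents really satisfy $\tfrac1{a_j}+\tfrac1{b_j}=\tfrac1p$ --- both automatic from the formulae above --- and one must treat the endpoint cases $q=\infty$, $r=\infty$, or $p=\infty$ by invoking the corresponding $L^\infty$ form of Gagliardo--Nirenberg. Since the statement is quoted from \cite{H-K}, one may alternatively just cite that reference; the above records the self-contained route.
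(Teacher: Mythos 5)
Your argument is correct, but it is worth noting that the paper itself offers no proof of this lemma at all: it is quoted verbatim from the reference \cite{H-K} (Hosono--Kawashima), so the "paper's approach" is simply a citation. What you supply is the standard self-contained route to these Moser-type product and commutator estimates: Leibniz expansion, one-dimensional Gagliardo--Nirenberg interpolation of each factor between the $L^q$ norm of the undifferentiated (resp.\ once-differentiated) function and the $L^r$ norm of its top derivative, H\"older with the matched exponents $\tfrac1{a_j}+\tfrac1{b_j}=\tfrac1p$, and Young's inequality to recombine. Your bookkeeping checks out: the interpolation parameters $j/\alpha$ and $(j-1)/(\alpha-1)$ lie in $[0,1]$, the only place $\theta=1$ occurs is the trivial endpoint $j=\alpha$ where no interpolation is needed (so the classical exceptional case of Gagliardo--Nirenberg is never invoked), the cancellation of the $j=0$ term in the commutator identity is exactly right, and the degenerate cases $\alpha=0$ in (1) and $\alpha=1$ in (2) reduce to plain H\"older as you say. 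The only thing you gain over the paper is self-containedness; the only thing the citation gains is brevity. Either is acceptable, and your proof has no gap.
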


Now, we give the energy estimates for the solutions of system \eqref{em-2}.
\begin{lem} \label{zero-energy-lem}
Under the assumption \eqref{assumption}, it holds
\be \label{zero-energy}
\|W(t)\|_{L^2}^2  + \intt\| (u_1,u_r)\|_{L^2}^2 d\tau \le C\| W(0) \|_{L^2}^2 + C\intt M_1\| (\rho,u_1,u_r)\|_{L^2}^2d\tau,
\ee
where  $C$ is a positive constant and $M_1 =\| \partial_{x} (\rho,u_1) \|_{L^\infty} +  \| u_1 \|_{L^\infty}\| \partial_x\rho \|_{L^\infty} + \| \rho \|_{L^\infty}\| \partial_x u_1 \|_{L^\infty} $.
\end{lem}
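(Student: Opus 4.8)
The plan is to perform the standard $L^2$ energy method directly on the system \eqref{em-2}, treating the linear relaxation terms as the source of dissipation and the quasilinear/nonlinear terms as controllable remainders under the smallness assumption \eqref{assumption}. First I would multiply the $\rho$-equation by $\frac{P'(\rho+1)}{\rho+1}\rho$ (a symmetrizing weight chosen so that the $\partial_x u_1$-coupling with the $u_1$-equation cancels), the $u_1$-equation by $u_1$, the $u_r$-equation by $u_r$, the $E_1$-equation by $E_1$, the $E_r$-equation by $E_r$, and the $B_r$-equation by $B_r$, then integrate over $x\in\mathbb{R}$ and add. The antisymmetric terms $-\mathbb{O}_1 u_r\cdot u_r$, the Maxwell coupling $-\mathbb{O}_1\partial_x B_r\cdot E_r + \mathbb{O}_1\partial_x E_r\cdot B_r$ (which integrates to a perfect $x$-derivative), and the electric coupling $E_1 u_1$ versus $\partial_t E_1 = (\rho+1)u_1$ need to be matched carefully; the constraint $\partial_x E_1 = -\rho$ is what lets the $\gamma\partial_x\rho\cdot u_1$-type term pair with $E_1 u_1$. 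The key cancellations to watch are: the coupling between the first two equations, and the coupling $E_r\leftrightarrow u_r$ between the $u_r$- and $E_r$-equations, which must cancel against each other so that only the dissipative terms $\|u_1\|_{L^2}^2 + \|u_r\|_{L^2}^2$ survive on the good side.

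Next I would collect the time-derivative terms into $\frac{d}{dt}\mathcal{E}_0(t)$ where $\mathcal{E}_0(t) \sim \|W(t)\|_{L^2}^2$ (the weight $\frac{P'(\rho+1)}{\rho+1}$ is bounded above and below under \eqref{assumption}, so $\mathcal{E}_0$ is equivalent to $\|W\|_{L^2}^2$; one picks up an extra term from $\partial_t\big(\frac{P'(\rho+1)}{\rho+1}\big)$ which is absorbed into $M_1$). The remaining terms fall into two types: (i) genuinely nonlinear terms coming from $u_1\partial_x u_1$, $u_1\partial_x u_r$, $u_1\mathbb{O}_1 B_r$, $u_r\cdot\mathbb{O}_1 B_r$, $\frac{1}{2}\partial_x u_1\cdot\rho^2$-type terms after integration by parts, and the $(\rho+1)$ versus $1$ discrepancies in the $E_1$- and $E_r$-equations; and (ii) commutator-type terms like $\int u_1\partial_x\rho\cdot\rho\,dx$, handled by integration by parts as $-\frac12\int\partial_x u_1\cdot\rho^2\,dx$. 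Each of these is estimated by Hölder and bounded by $C M_1\big(\|\rho\|_{L^2}^2 + \|u_1\|_{L^2}^2 + \|u_r\|_{L^2}^2\big)$ plus, for the terms involving $B_r$, a term of the form $C\|u_1\|_{L^\infty}\|u_r\|_{L^2}\|B_r\|_{L^2}$ or $C\|B_r\|_{L^\infty}\|u_r\|_{L^2}\|u_1\|_{L^2}$ — these are bounded by $C\delta$ times the $L^2$ norms and, crucially, the $\|u_1\|_{L^2}^2$ and $\|u_r\|_{L^2}^2$ pieces get absorbed into the dissipation on the left for $\delta$ small, while the $B_r$ factor is controlled by $\delta\cdot\|u_r\|_{L^2}^2$ after Young's inequality. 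Then integrating in time from $0$ to $t$ yields \eqref{zero-energy}.

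The main obstacle, and the step requiring the most care, is bookkeeping the cross terms so that the dissipative structure is exposed correctly: specifically verifying that the linear couplings $(\rho,u_1,E_1)$ among themselves and $(u_r,E_r,B_r)$ among themselves produce no net contribution except the relaxation dissipation $\|u_1\|_{L^2}^2+\|u_r\|_{L^2}^2$, using the constraint \eqref{init-compatible}/$\partial_x E_1=-\rho$ and the skew-symmetry of $\mathbb{O}_1$ and of $\mathbb{O}_1\partial_x$. A secondary subtlety is that the quasilinear coefficient $\frac{P'(\rho+1)}{\rho+1}$ is not constant, so one must either symmetrize with this weight from the start or absorb $\|\partial_t\rho\|_{L^\infty}\sim M_1$-type errors; both routes lead to the stated $M_1$. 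Once the structure is laid bare, all remaining estimates are routine applications of Hölder's inequality and the smallness of $\delta$.
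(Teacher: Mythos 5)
Your overall strategy (weighted $L^2$ energy method, symmetrizing weight $\frac{P'(\rho+1)}{\rho+1}$ on the density equation, exploiting skew-symmetry of $\mathbb{O}_1$ and the Maxwell coupling, absorbing $\partial_t$-of-coefficient errors into $M_1$) is the same as the paper's, but you deviate at one decisive point: you test the velocity equations with $u_1$ and $u_r$, whereas the paper tests them with $(\rho+1)u_1$ and $(\rho+1)u_r$. This weight is not cosmetic. With it, the electric couplings cancel \emph{exactly}: the term $\int_{\mathbb{R}}(\rho+1)u_1E_1\,dx$ produced by the $u_1$-equation matches the term produced by $\partial_t E_1=(\rho+1)u_1$ tested with $E_1$, and likewise $\int_{\mathbb{R}}(\rho+1)u_r\cdot E_r\,dx$ matches the source in the $E_r$-equation; moreover the two Lorentz-type nonlinearities $\int(\rho+1)u_1\,u_r\cdot\mathbb{O}_1B_r\,dx$ and $-\int(\rho+1)u_1\mathbb{O}_1B_r\cdot u_r\,dx$ cancel identically, which is why no $B_r$-dependent quantity appears in $M_1$ at all. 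The pressure terms then combine into $\int P'(\rho+1)\partial_x(\rho u_1)\,dx$, which after one integration by parts is bounded by $\|\partial_x\rho\|_{L^\infty}\|\rho u_1\|_{L^1}\le M_1\|(\rho,u_1)\|_{L^2}^2$.

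With your unweighted multipliers the linear parts of these couplings still cancel, but you are left with the residual terms $\int_{\mathbb{R}}\rho\,u_1E_1\,dx$ and $\int_{\mathbb{R}}\rho\,u_r\cdot E_r\,dx$ (exactly the ``$(\rho+1)$ versus $1$ discrepancies'' you mention), and your claim that these are bounded by $CM_1\bigl(\|\rho\|_{L^2}^2+\|u_1\|_{L^2}^2+\|u_r\|_{L^2}^2\bigr)$ does not hold. Any Hölder estimate of $\int\rho\,u_r\cdot E_r\,dx$ necessarily involves an $E_r$-norm (e.g.\ $\|\rho\|_{L^\infty}\|u_r\|_{L^2}\|E_r\|_{L^2}$ or $\|E_r\|_{L^\infty}\|\rho\|_{L^2}\|u_r\|_{L^2}\le C\delta\|\rho\|_{L^2}\|u_r\|_{L^2}$), and neither $\|E_r\|_{L^2}^2$ nor a bare factor $\delta$ (which is not $M_1$) is admissible on the right-hand side of \eqref{zero-energy}; nor can a term $\delta\|E_r\|_{L^2}^2$ be absorbed on the left, since the zero-order dissipation there is only $\|(u_1,u_r)\|_{L^2}^2$. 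The same objection applies to $\int\rho\,u_1E_1\,dx$: even using the constraint $\rho=-\partial_xE_1$ to rewrite it as $\frac12\int\partial_xu_1\,E_1^2\,dx$ gives at best $M_1\|E_1\|_{L^2}^2$, which again is not of the claimed form. So as written your argument proves only a weaker variant of the lemma (with extra $E_1,E_r$ terms that would then have to be re-absorbed later using the dissipation of $\|(E_1,E_r)\|_{H^{N-1}}$ from the cross-term estimates), not the stated inequality. The fix is precisely the paper's choice of multipliers $(\rho+1)u_1$, $(\rho+1)u_r$, together with noting that $\int\frac{P'(\rho+1)}{\rho+1}\rho^2\,dx+\int(\rho+1)(u_1^2+|u_r|^2)\,dx$ remains equivalent to $\|(\rho,u_1,u_r)\|_{L^2}^2$ under \eqref{assumption}.
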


\begin{proof}
We  take the inner product of the first equation of \eqref{em-2} with $\frac{P'(\rho+1)}{\rho+1}\rho$, one has
\be \label{rho-zero}
	\frac{1}{2}\int_{\mathbb{R}}\frac{P'(\rho+1)}{\rho+1}\partial_t\rho^2 \,dx +  \int_{\mathbb{R}} P'(\rho+1)\rho\partial_x u_1\,dx +  \frac{1}{2}\int_{\mathbb{R}} \frac{P'(\rho+1)}{\rho+1}u_1\partial_x (\rho^2) \,dx  =0.
\ee
Next, taking the inner product of the second equation of \eqref{em-2} with $(\rho+1)u_1$, we have
\bma \label{u1-zero}
	&\quad\frac{1}{2}\int_{\mathbb{R}}(\rho+1)\partial_t u_1^2 \,dx +  \int_{\mathbb{R}} (\rho+1)u_1E_1\,dx +  \int_{\mathbb{R}}(\rho+1)u_1^2 \,dx\nnm\\
	& =-  \frac{1}{2}\int_{\mathbb{R}}(\rho+1)u_1\partial_x(u_1^2) \,dx -  \int_{\mathbb{R}}P'(\rho+1)u_1\partial_x\rho  \,dx +  \int_{\mathbb{R}} (\rho+1)u_1u_r\cdot\mathbb{O}_1B_r\,dx.	
\ema
Taking the inner product of third equation of \eqref{em-2} with $(\rho+1)u_r$, we obtain
\bma \label{ur-zero}
	&\quad\frac{1}{2}\int_{\mathbb{R}}(\rho+1)\partial_t |u_r|^2 \,dx + \int_{\mathbb{R}} (\rho+1)u_r\cdot E_r\,dx +  \int_{\mathbb{R}}(\rho+1)|u_r|^2 \,dx \nnm\\
	& =- \frac{1}{2}\int_{\mathbb{R}}(\rho+1)u_1\partial_x |u_r|^2 \,dx-  \int_{\mathbb{R}} (\rho+1)u_1\mathbb{O}_1B_r\cdot u_r\,dx.
\ema
Here, we have used the fact that $\mathbb{O}_1u_r\cdot u_r=0.$
Taking the inner product of fourth, fifth and sixth equations of \eqref{em-2} with $E_1,~E_r$ and $B_r$ respectively, one has
\bma \label{E-B-zero}
	&\quad\frac{1}{2}\int_{\mathbb{R}}\partial_tE_1^2\,dx + \frac{1}{2}\int_{\mathbb{R}}\partial_t|E_r|^2\,dx +\frac{1}{2}\int_{\mathbb{R}}\partial_t|B_r|^2\,dx \nnm\\
	&=  \int_{\mathbb{R}} (\rho+1)u_1E_1\,dx+  \int_{\mathbb{R}} (\rho+1)u_r\cdot E_r\,dx+\int_{\mathbb{R}} \mathbb{O}_1\partial_xB_r\cdot E_r\,dx-\int_{\mathbb{R}} \mathbb{O}_1\partial_xE_r\cdot B_r\,dx.
\ema	
Adding \eqref{rho-zero}--\eqref{E-B-zero} together, we obtain
\bma \label{dt-zero-enrgy}
	&\frac{d}{2dt}\left(  \int_{\mathbb{R}}\frac{P'(\rho+1)}{\rho+1}\rho^2 \,dx + \int_{\mathbb{R}}(\rho+1)(u_1^2+|u_r|^2) \,dx +\|( E_1,E_r,B_r ) \|_{L^2}^2 \right) \nnm\\
	&+ \int_{\mathbb{R}}(\rho+1)(u_1^2 +|u_r|^2) \,dx =I_1 +I_2,
\ema
where
\bmas
	&I_1 = \frac{1}{2}\int_{\mathbb{R}}\partial_t\left(\frac{P'(\rho+1)}{\rho+1}\right)\rho^2 \,dx +\frac{1}{2}\int_{\mathbb{R}}\partial_t(\rho+1) (u_1^2 +|u_r|^2 ) \,dx,\\
	&I_2 = -\int_{\mathbb{R}} P'(\rho+1)\partial_x(\rho u_1) \,dx-\frac{1}{2}\int_{\mathbb{R}} \frac{P'(\rho+1)}{\rho+1}u_1\partial_x \rho^2\,dx -\frac{1}{2}\int_{\mathbb{R}}(\rho+1)u_1\partial_x (u_1^2+|u_r|^2 )\,dx.
\emas
Under the assumption \eqref{assumption}, we have
\bmas
	\left\|\partial_t\left(\frac{P'(\rho+1)}{\rho+1}\right)\right\|_{L^\infty}  &\le C\| \partial_t \rho\|_{L^\infty}=\| \partial_x u_1+ \partial_x (\rho u_1)\|_{L^\infty}\\
	&\le C\left( \| \partial_x u_1 \|_{L^\infty} +\| u_1\|_{L^\infty}\| \partial_x \rho\|_{L^\infty} + \|  \rho\|_{L^\infty}\| \partial_x u_1\|_{L^\infty}\right).
\emas
And then, for $I_1$, we have
$$
	|I_1| \le C\left( \| \partial_x u_1 \|_{L^\infty} +\| u_1\|_{L^\infty}\| \partial_x \rho\|_{L^\infty}+ \|  \rho\|_{L^\infty}\| \partial_x u_1\|_{L^\infty} \right) \| (\rho,u_1,u_r) \|_{L^2}^2.
$$
For $I_2$, by integrating by parts, we have
\bmas
	|I_2| &\le C\| \partial_x(P'(\rho+1)) \|_{L^\infty}\| \rho u_1 \|_{L^1} + C\left\| \partial_x\left( \frac{P'(\rho+1)}{\rho+1}u_1\right) \right\|_{L^\infty}\| \rho \|_{L^2}^2 \\
	&\quad  + C\left\| \partial_x\left[(\rho+1)u_1\right] \right\|_{L^\infty}(\| u_1 \|_{L^2}^2 +\| u_r \|_{L^2}^2 )\\
	&\le C ( \| \partial_x(\rho,u_1)\|_{L^\infty} +\| u_1\|_{L^\infty}\| \partial_x \rho\|_{L^\infty}  + \| \rho\|_{L^\infty}\| \partial_x u_1\|_{L^\infty} ) \| (\rho,u_1,u_r) \|_{L^2}^2.
\emas
We see that $\int_{\mathbb{R}}\frac{P'(\rho+1)}{\rho+1}\rho^2 dx +\int_{\mathbb{R}}(\rho+1) (u_1^2+|u_r|^2) dx  $ is equivalent to $\| (\rho,u_1,u_r)\|_{L^2}^2$. Under the assumption \eqref{assumption}, further integrating  \eqref{dt-zero-enrgy} with respect to $t$, one has
$$
	\| W(t)\|_{L^2}^2 + \int_0^t\| (u_1,u_r)\|_{L^2}^2 \,d\tau\le C\|W(0)\|_{L^2}^2+  C\int_0^t M_1\| (\rho,u_1,u_r)\|_{L^2}^2 \,d\tau
$$
with $M_1 = \| u_1 \|_{L^\infty}\| \partial_x\rho \|_{L^\infty} + \| \rho \|_{L^\infty}\| \partial_x u_1 \|_{L^\infty} + \| \partial_{x} (\rho,u_1) \|_{L^\infty}.$
Thus, we complete the proof.
\end{proof}

Next, we show the energy estimates for derivatives of solution.
\begin{lem} \label{alpha-energy-lem}
Under the assumption \eqref{assumption},  it holds for $1 \le \alpha \le N$ that
\bma \label{alpha-energy}
	&\quad \| \partial_x^\alpha W(t)\|_{L^2}^2+ \int _0^t\| \partial_x^\alpha (u_1, u_r) \|_{L^2}^2d\tau \nnm\\
	& \le C\|\partial_x^\alpha W(0)\|_{L^2}^2 + C\int _0^t M_2\| \partial_x^\alpha(\rho,u_1,u_r)\|_{L^2}^2  +\| B_r\|_{L^\infty}\| \partial_x^\alpha(u_1,u_r)\|_{L^2}^2d\tau \nnm\\
	& \quad   +C\int _0^t\| \partial_x^\alpha(E_1,E_r,B_r)\|_{L^2}\| (\partial_x \rho,u_1,u_r)\|_{H^1} \| \partial_x^\alpha(\rho,u_1,u_r)\|_{L^2} \,d\tau,
\ema
where $M_2=\| \partial_x (\rho,u_1,u_r)\|_{L^\infty}+ \| \partial_x \rho\|_{L^\infty}\| u_1\|_{L^\infty} +\| \partial_x u_1\|_{L^\infty}\| \rho\|_{L^\infty}$.
\end{lem}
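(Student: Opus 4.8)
The plan is to perform a high-order $L^2$ energy estimate on \eqref{em-2}, following the scheme of the proof of Lemma \ref{zero-energy-lem}. For $1\le\alpha\le N$, I apply $\partial_x^\alpha$ to the six evolution equations of \eqref{em-2} — keeping the convective terms in non-divergence form — and test the resulting equations respectively against $\frac{P'(\rho+1)}{\rho+1}\partial_x^\alpha\rho$, $(\rho+1)\partial_x^\alpha u_1$, $(\rho+1)\partial_x^\alpha u_r$, $\partial_x^\alpha E_1$, $\partial_x^\alpha E_r$ and $\partial_x^\alpha B_r$. Whenever $\partial_x^\alpha$ falls on a quasilinear product such as $(\rho+1)\partial_x u_1$, $u_1\partial_x\rho$, $\frac{P'(\rho+1)}{\rho+1}\partial_x\rho$, $u_1\partial_x u_1$ or $u_1\partial_x u_r$, I split off the leading term (the coefficient multiplying the top-order derivative) and collect the rest as commutator remainders, to be estimated by the Moser/commutator inequalities of Lemma \ref{sobolev-ineq}; the electromagnetic forcing terms $u_r\cdot\mathbb{O}_1 B_r$ and $u_1\mathbb{O}_1 B_r$ are handled directly through Lemma \ref{sobolev-ineq}(1).

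Adding the six tested identities, the quadratic terms organize exactly as in \eqref{dt-zero-enrgy}: the couplings $\int(\rho+1)\partial_x^\alpha u_1\,\partial_x^\alpha E_1$ and $\int(\rho+1)\partial_x^\alpha u_r\cdot\partial_x^\alpha E_r$ from the two momentum equations cancel against the corresponding terms from the $E_1$- and $E_r$-equations; the Maxwell couplings $\int\mathbb{O}_1\partial_x^{\alpha+1}B_r\cdot\partial_x^\alpha E_r$ and $\int\mathbb{O}_1\partial_x^{\alpha+1}E_r\cdot\partial_x^\alpha B_r$ cancel after one integration by parts together with $\mathbb{O}_1^{T}=-\mathbb{O}_1$; the gyroscopic term gives $\int(\rho+1)\mathbb{O}_1\partial_x^\alpha u_r\cdot\partial_x^\alpha u_r=0$; and the two pressure--density couplings $\int P'(\rho+1)\partial_x^\alpha\rho\,\partial_x^{\alpha+1}u_1$ and $\int P'(\rho+1)\partial_x^{\alpha+1}\rho\,\partial_x^\alpha u_1$ combine into $-\int P''(\rho+1)\partial_x\rho\,\partial_x^\alpha\rho\,\partial_x^\alpha u_1$. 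What remains on the left-hand side is $\frac12\frac{d}{dt}$ of the quadratic form $\int\frac{P'(\rho+1)}{\rho+1}|\partial_x^\alpha\rho|^2+\int(\rho+1)(|\partial_x^\alpha u_1|^2+|\partial_x^\alpha u_r|^2)+\|\partial_x^\alpha(E_1,E_r,B_r)\|_{L^2}^2$, which by the smallness assumption \eqref{assumption} (under which $\rho+1$ and $P'(\rho+1)$ are bounded between two positive constants) is equivalent to $\|\partial_x^\alpha W\|_{L^2}^2$, together with the dissipation $\int(\rho+1)(|\partial_x^\alpha u_1|^2+|\partial_x^\alpha u_r|^2)$, which is bounded below by a fixed positive multiple of $\|\partial_x^\alpha(u_1,u_r)\|_{L^2}^2$.

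It remains to estimate the collected lower-order and nonlinear terms. The terms arising from time-differentiating the coefficients $\frac{P'(\rho+1)}{\rho+1}$ and $\rho+1$ and from integrating the convective parts by parts, namely $\frac12\int\partial_t\big(\frac{P'(\rho+1)}{\rho+1}\big)|\partial_x^\alpha\rho|^2$, $\frac12\int\partial_t(\rho+1)(|\partial_x^\alpha u_1|^2+|\partial_x^\alpha u_r|^2)$ and $\int\partial_x(\cdots)\,|\partial_x^\alpha(\rho,u_1,u_r)|^2$ (including the combined pressure term above), are all bounded by $CM_2\|\partial_x^\alpha(\rho,u_1,u_r)\|_{L^2}^2$ once one notes $\|\partial_t\rho\|_{L^\infty}\le CM_2$ from the first equation of \eqref{em-2}. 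For the electromagnetic nonlinearities, Lemma \ref{sobolev-ineq}(1) gives $\|\partial_x^\alpha(u_r\cdot\mathbb{O}_1 B_r)\|_{L^2}\le C\|u_r\|_{L^\infty}\|\partial_x^\alpha B_r\|_{L^2}+C\|B_r\|_{L^\infty}\|\partial_x^\alpha u_r\|_{L^2}$ (and similarly for $u_1\mathbb{O}_1 B_r$), so that after testing, the $\|B_r\|_{L^\infty}$-parts produce $C\|B_r\|_{L^\infty}\|\partial_x^\alpha(u_1,u_r)\|_{L^2}^2$, while the remaining parts produce $C\|(\partial_x\rho,u_1,u_r)\|_{H^1}\|\partial_x^\alpha(E_1,E_r,B_r)\|_{L^2}\|\partial_x^\alpha(\rho,u_1,u_r)\|_{L^2}$, using $\|u_1\|_{L^\infty}+\|u_r\|_{L^\infty}\le C\|(\partial_x\rho,u_1,u_r)\|_{H^1}$ in one space dimension; the convective commutators, together with the forcing commutators $\int\big(\partial_x^\alpha((\rho+1)u_1)-(\rho+1)\partial_x^\alpha u_1\big)\partial_x^\alpha E_1$ and its $E_r$-analogue (invoking the constraint $\partial_x E_1=-\rho$ when convenient), are estimated by Lemma \ref{sobolev-ineq} and absorbed into the same three terms. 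Integrating the resulting differential inequality over $[0,t]$ and using the norm equivalence gives \eqref{alpha-energy}.

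The principal difficulty is the bookkeeping: one has to verify that every commutator and every nonlinear remainder generated by $\partial_x^\alpha$ is controlled by one of the three quantities on the right of \eqref{alpha-energy}. The sensitive one is the Lorentz-type coupling $u_r\cdot\mathbb{O}_1 B_r$ (and $u_1\mathbb{O}_1 B_r$): the contribution in which the top derivative lands on $B_r$ cannot be closed by the $(u_1,u_r)$-dissipation alone and must be retained in the mixed form $\|\partial_x^\alpha(E_1,E_r,B_r)\|_{L^2}\|(\partial_x\rho,u_1,u_r)\|_{H^1}\|\partial_x^\alpha(\rho,u_1,u_r)\|_{L^2}$; this is precisely the term which, in the subsequent decay analysis, prevents $(\rho,u_1,E_1)$ from decaying exponentially and forces the polynomial rates of Theorem \ref{Thm-rates}.
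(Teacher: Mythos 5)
Your proposal is correct and follows essentially the same route as the paper: the same $\partial_x^\alpha$-differentiated system tested against $\frac{P'(\rho+1)}{\rho+1}\partial_x^\alpha\rho$, $(\rho+1)\partial_x^\alpha u_1$, $(\rho+1)\partial_x^\alpha u_r$ and $\partial_x^\alpha(E_1,E_r,B_r)$, with the same cancellations, the commutator remainders handled by Lemma \ref{sobolev-ineq}, and the Lorentz and electric-coupling commutators collected into the three right-hand terms of \eqref{alpha-energy}. No substantive difference from the paper's argument.
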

			
\begin{proof}
We apply $\partial_x^\alpha$ to the first equation of \eqref{em-2} and take the inner product with $\frac{P'(\rho+1)}{\rho+1}\partial_x^\alpha\rho$, we have
\bma
	&\quad\frac{1}{2}\int_{\mathbb{R}}\frac{P'(\rho+1)}{\rho+1}\partial_t(\partial_x^\alpha\rho)^2\,dx \nnm \\
	&= - \int_{\mathbb{R}} \frac{P'(\rho+1)}{\rho+1} \partial_x^\alpha\rho \partial_x^\alpha(u_1\partial_x\rho)  \,dx - \int_{\mathbb{R}} \frac{P'(\rho+1)}{\rho+1}\partial_x^\alpha\rho  \partial_x^\alpha[(\rho+1)\partial_x u_1] \,dx. \label{rho-high}
\ema

Applying $\partial_x^\alpha$ to the second equation of \eqref{em-2} and taking the inner product with $(\rho+1)\partial_x^\alpha u_1$, we obtain
\bma
	&\quad\frac{1}{2}\int_{\mathbb{R}}(\rho+1)\partial_t(\partial_x^\alpha u_1)^2\,dx +  \int_{\mathbb{R}} (\rho+1)\partial_x^\alpha u_1\partial_x^\alpha E_1 \,dx + \int_{\mathbb{R}}(\rho+1)(\partial_x^\alpha u_1)^2 \,dx \nnm\\
	&= - \int_{\mathbb{R}}(\rho+1) \partial_x^\alpha u_1 \partial_x^\alpha(u_1\partial_x u_1) \,dx -\int_{\mathbb{R}} (\rho+1) \partial_x^\alpha u_1 \partial_x^\alpha\left(\frac{P'(\rho+1)}{\rho+1}\partial_x\rho\right) \,dx \nnm\\
	&\quad +  \int_{\mathbb{R}} (\rho+1)\partial_x^\alpha u_1\partial_x^\alpha(u_r\cdot\mathbb{O}_1B_r)\,dx.
\ema

Applying $\partial_x^\alpha$ to the third equation of \eqref{em-2} and taking the inner product with $(\rho+1)\partial_x^\alpha u_r$, it yields
\bma
	&\quad\frac{1}{2} \int_{\mathbb{R}}(\rho+1)\partial_t |\partial_x^\alpha u_r|^2 \,dx + \int_{\mathbb{R}}(\rho+1)|\partial_x^\alpha u_r|^2 \,dx +  \int_{\mathbb{R}} (\rho+1)\partial_x^\alpha u_r \cdot\partial_x^\alpha E_r \,dx\nnm\\
	&= - \int_{\mathbb{R}} (\rho+1)\partial_x^\alpha u_r \cdot  \partial_x^\alpha(u_1\partial_x u_r)  \,dx  -\int_{\mathbb{R}} (\rho+1) \partial_x^\alpha u_r \cdot \partial_x^\alpha(u_1\mathbb{O}_1B_r) \,dx.
\ema

Next, applying $\partial_x^\alpha$ to the fourth, fifth and sixth equations of \eqref{em-2} and taking the inner product with $\partial_x^\alpha E_1,~\partial_x^\alpha E_r$ and $\partial_x^\alpha B_r$ respectively, we obtain
\bma \label{E-B-high}
	&\quad \frac{d}{2dt}\| \partial_x^\alpha E_1\|_{L^2}^2 + \frac{d}{2dt}\| \partial_x^\alpha E_r\|_{L^2}^2 + \frac{d}{2dt}\| \partial_x^\alpha B_r\|_{L^2}^2 \nnm\\
	&=  \int_{\mathbb{R}} \partial_x^\alpha[(\rho+1)u_1] \partial_x^\alpha E_1\,dx +  \int_{\mathbb{R}} \partial_x^\alpha[(\rho+1)u_r] \cdot \partial_x^\alpha E_r\,dx\nnm\\
	&\quad +\int_{\mathbb{R}} \mathbb{O}_1\partial_x^{\alpha+1}B_r \cdot \partial_x^\alpha E_r\,dx -\int_{\mathbb{R}} \mathbb{O}_1\partial_x^{\alpha+1}E_r \cdot \partial_x^\alpha B_r\,dx.
\ema
Adding \eqref{rho-high}--\eqref{E-B-high} together, we have
\bma \label{dt-alpha-energy}
	&\frac{d}{2dt}\left(  \int_{\mathbb{R}}\frac{P'(\rho+1)}{\rho+1}(\partial_x^\alpha \rho)^2  \,dx + \int_{\mathbb{R}}(\rho+1) |\partial_x^\alpha u |^2 \,dx + \| \partial_x^\alpha ( E ,B_r ) \|_{L^2}^2 \right)\nnm \\
	& + \int_{\mathbb{R}}(\rho+1) |\partial_x^\alpha u |^2 \,dx =g_1+g_2+g_3+g_4,
\ema
where $u=(u_1,u_r)$, $E=(E_1,E_r)$, and
\bmas
	g_1 & =\frac12 \int_{\mathbb{R}}\partial_t\left(\frac{P'(\rho+1)}{\rho+1}\right)(\partial_x^\alpha\rho)^2\,dx +  \frac{1}{2}\int_{\mathbb{R}}\partial_t(\rho+1) |\partial_x^\alpha u |^2  \,dx,\\
	g_2 &= -\frac{1}{2}\int_{\mathbb{R}} \frac{P'(\rho+1)}{\rho+1}u_1\partial_x( \partial_x^\alpha\rho)^2\,dx - \int_{\mathbb{R}} P'(\rho+1)\partial_x(\partial_x^\alpha \rho \partial_x^\alpha u_1) \,dx\\
	&\quad -\frac{1}{2}\int_{\mathbb{R}} (\rho+1)u_1\partial_x  |\partial_x^\alpha u|^2 \,dx ,\\
	g_3 & = -\int_{\mathbb{R}} \frac{P'(\rho+1)}{\rho+1}\left[ \partial_x^\alpha(u_1\partial_x\rho)-u_1\partial_x^{\alpha+1}\rho \right] \partial_x^\alpha\rho \,dx\\
	&\quad -\int_{\mathbb{R}} \frac{P'(\rho+1)}{\rho+1}\left[ \partial_x^\alpha[(\rho+1)\partial_x u_1]-(\rho+1)\partial_x^{\alpha+1} u_1 \right] \partial_x^\alpha\rho \,dx\\
	&\quad-\int_{\mathbb{R}}(\rho+1)\left[\partial_x^\alpha(u_1\partial_x u)-u_1\partial_x^{\alpha+1}u\right]\cdot \partial_x^\alpha u \,dx\\
	&\quad-\int_{\mathbb{R}} (\rho+1)\left[\partial_x^\alpha\left(\frac{P'(\rho+1)}{\rho+1}\partial_x\rho\right)-\frac{P'(\rho+1)}{\rho+1}\partial_x^{\alpha+1}\rho\right] \partial_x^\alpha u_1 \,dx, \\
	g_4 &=  \int_{\mathbb{R}} \partial_x^\alpha[(\rho+1)u]\cdot\partial_x^\alpha E \,dx - \int_{\mathbb{R}} (\rho+1)\partial_x^\alpha E\cdot \partial_x^\alpha u \,dx \\
	&\quad +\int_{\mathbb{R}} (\rho+1)\partial_x^\alpha(u_r\cdot\mathbb{O}_1B_r)\partial_x^\alpha u_1\,dx -\int_{\mathbb{R}} (\rho+1)\partial_x^\alpha(u_1\mathbb{O}_1B_r) \cdot \partial_x^\alpha u_r\,dx.
\emas
For $g_1$, 
we have
\bmas
	|g_1| &\le C\left\|\partial_t\left(\frac{P'(\rho+1)}{\rho+1}\right)\right\|_{L^\infty}\| \partial_x^\alpha \rho \|_{L^2}^2 + C\left\|\partial_t\left(\rho+1\right)\right\|_{L^\infty}\left(\| \partial_x^\alpha u_1 \|_{L^2}^2 +\| \partial_x^\alpha u_r \|_{L^2}^2\right) \\
	&\le CM_1\left( \| \partial_x^\alpha \rho \|_{L^2}^2 +\| \partial_x^\alpha u_1 \|_{L^2}^2 +\| \partial_x^\alpha u_r \|_{L^2}^2\right).
\emas
Here, $M_1$ is defined in Lemma \ref{zero-energy-lem}. For $g_2$, using integral by parts, we have
\bmas
	|g_2| &\le C \left\| \partial_x\left(\frac{P'(\rho+1)}{\rho+1}u_1\right)\right\|_{L^\infty} \| \partial_x^\alpha\rho\|_{L^2}^2 + C\| \partial_x(P'(\rho+1))\|_{L^\infty}(\| \partial_x^\alpha\rho\|_{L^2}^2 + \| \partial_x^\alpha u_1\|_{L^2}^2) \\
	&\quad +C\| \partial_x[(\rho+1)u_1]\|_{L^\infty}(\| \partial_x^\alpha u_1\|_{L^2}^2 + \| \partial_x^\alpha u_r\|_{L^2}^2)\\
	&\le CM_1\left(\| \partial_x^\alpha \rho\|_{L^2}^2+\| \partial_x^\alpha u_1\|_{L^2}^2 + \| \partial_x^\alpha u_r\|_{L^2}^2\right).
\emas
For $g_3$, by the second estimate in Lemma \ref{sobolev-ineq}, we have
\bmas
	&\quad \int_{\mathbb{R}} \frac{P'(\rho+1)}{\rho+1}[\partial_x^\alpha(u_1\partial_x \rho)-u_1\partial_x^{\alpha+1}\rho]\partial_x^\alpha \rho\,dx\\
	&\le\left\| \frac{P'(\rho+1)}{\rho+1}\right\|_{L^\infty}\left\| \partial_x^\alpha(u_1\partial_x \rho)-u_1\partial_x^{\alpha+1}\rho\right\|_{L^2}\| \partial_x^\alpha\rho\|_{L^2}\\
	&\le C\left( \| \partial_x\rho \|_{L^\infty}\| \partial_x^\alpha u_1\|_{L^2} +\| \partial_xu_1 \|_{L^\infty}\| \partial_x^\alpha \rho\|_{L^2}\right)\| \partial_x^\alpha\rho\|_{L^2}.
\emas
Similarly, we have
\bmas
	&\quad \int_{\mathbb{R}} \frac{P'(\rho+1)}{\rho+1}\left\{ \partial_x^\alpha[(\rho+1)\partial_x u_1]-(\rho+1)\partial_x^{\alpha+1} u_1 \right\} \partial_x^\alpha\rho \,dx\\
	&\le C\left(\| \partial_x \rho \|_{L^\infty}\| \partial_x^\alpha u_1\|_{L^2} + \| \partial_x u_1\|_{L^\infty}\| \partial_x^\alpha \rho\|_{L^2}\right)\| \partial_x^\alpha \rho\|_{L^2}.
\emas
And under the assumption \eqref{assumption}, we have
\bmas
	&\quad \int_{\mathbb{R}} (\rho+1)\left[\partial_x^\alpha\left(\frac{P'(\rho+1)}{\rho+1}\partial_x\rho\right)-\frac{P'(\rho+1)}{\rho+1}\partial_x^{\alpha+1}\rho\right] \partial_x^\alpha u_1 \,dx \\
&\le C\| \partial_x\rho\|_{L^\infty}\| \partial_x^\alpha \rho\|_{L^2} \| \partial_x^\alpha u_1\|_{L^2},\\
	&\quad \int_{\mathbb{R}}(\rho+1)\left[\partial_x^\alpha(u_1\partial_x u)-u_1\partial_x^{\alpha+1}u\right] \partial_x^\alpha u \,dx \\
	&\le C\left(\| \partial_x u_1\|_{L^\infty}\| \partial_x^\alpha u\|_{L^2} + \| \partial_x u\|_{L^\infty}\| \partial_x^\alpha u_1\|_{L^2}\right)\| \partial_x^\alpha u\|_{L^2}.
\emas
Thus, we obtain that
\bmas
	|g_3| \le C\| \partial_x(\rho,u_1,u_r)\|_{L^\infty}\left(\| \partial_x^\alpha \rho\|_{L^2}^2+\| \partial_x^\alpha u_1\|_{L^2}^2 + \| \partial_x^\alpha u_r\|_{L^2}^2\right).
\emas
For $g_4$, we have the following estimates
\bmas
	&\quad\int_{\mathbb{R}} \partial_x^\alpha[(\rho+1)u]\cdot\partial_x^\alpha E \,dx - \int_{\mathbb{R}} (\rho+1)\partial_x^\alpha E \cdot\partial_x^\alpha u \,dx\\
	&= \int_{\mathbb{R}} (\partial_x^\alpha[(\rho+1)u]- (\rho+1)\partial_x^\alpha u)\cdot \partial_x^\alpha E \,dx \\
	&\le C \left( \| u\|_{L^\infty}\| \partial_x^\alpha \rho\|_{L^2}+ \| \partial_x \rho\|_{L^\infty}\| \partial_x^{\alpha-1} u\|_{L^2}\right)\| \partial_x^\alpha E\|_{L^2}.
\emas
And under the assumption \eqref{assumption}, we have
\bmas
	&\int_{\mathbb{R}} (\rho+1)\partial_x^\alpha(u_r\cdot\mathbb{O}_1B_r)\partial_x^\alpha u_1\,dx \le C\left( \| u_r\|_{L^\infty}\| \partial_x^\alpha B_r\|_{L^2} + \| B_r\|_{L^\infty}\| \partial_x^\alpha u_r\|_{L^2}\right)\| \partial_x^\alpha u_1\|_{L^2}, \\
	&\int_{\mathbb{R}} (\rho+1)\partial_x^\alpha(u_1\mathbb{O}_1B_r)\partial_x^\alpha u_r\,dx\le C\left( \| u_1\|_{L^\infty}\| \partial_x^\alpha B_r\|_{L^2} + \| B_r\|_{L^\infty}\| \partial_x^\alpha u_1\|_{L^2}\right)\| \partial_x^\alpha u_r\|_{L^2}.
\emas
Therefore, we have
\bmas
	|g_4| &\le C \left( \| (u_1,u_r)\|_{L^\infty} + \| \partial_x\rho\|_{L^\infty}\right)\left( \| \partial_x^\alpha\rho\|_{L^2}+\| \partial_x^{\alpha-1}(u_1,u_r)\|_{L^2}\right)\| \partial_x^\alpha (E_1,E_r)\|_{L^2}\\
	&\quad+C\| (u_1,u_r)\|_{L^\infty}\| \partial_x^\alpha B_r\|_{L^2}\| \partial_x^\alpha(u_1,u_r)\|_{L^2}+C\| B_r\|_{L^\infty}\| \partial_x^\alpha(u_1,u_r)\|_{L^2}^2.
\emas
Under the assumption \eqref{assumption} and by integrating \eqref{dt-alpha-energy} with respect to $t$, we obtain  the energy estimate \eqref{alpha-energy}. Thus, we complete the proof of Lemma \ref{alpha-energy-lem}.
\end{proof}
			
\begin{lem}
It holds that
\bma \label{dissipation-rho-E}
\begin{split}
	&\quad \sum_{ \alpha=0}^{N-1}\frac{d}{dt}  \int_{\mathbb{R}} (\partial_x^\alpha u_1\partial_x^{\alpha+1}\rho+\partial_x^\alpha u_1\partial_x^{\alpha}E_1 + \partial_x^\alpha u_r\cdot\partial_x^{\alpha}E_r)\,dx  +  (\| \rho\|_{H^{N}}^2 + \| (E_1,E_r)\|_{H^{N-1}}^2) \\
	&\le \frac{C}{\epsilon}  \| (u_1,u_r)\|_{H^N}^2  + \epsilon \| \partial_x B_r\|_{H^{N-2}}^2  \\
	&\quad +C \| (\rho, u_1, u_r, B_r)\|_{H^2} ( \| (\rho,u_1,u_r)\|_{H^N}^2 + \| (E_1,E_r) \|_{H^{N-1}}^2 + \| \partial_x B_r\|_{H^{N-2}}^2 ),
\end{split}
\ema
where $\epsilon$ and $C$ are positive constants.
\end{lem}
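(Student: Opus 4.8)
The plan is to view the left-hand-side sum as an \emph{interaction functional} whose $t$-derivative recovers precisely the dissipation of $\rho$, $E_1$ and $E_r$ that is missing from the pure energy identities of Lemmas \ref{zero-energy-lem}--\ref{alpha-energy-lem}. Concretely, for each $0\le\alpha\le N-1$ I would differentiate the three pieces $\int_{\mathbb R}\partial_x^\alpha u_1\,\partial_x^{\alpha+1}\rho\,dx$, $\int_{\mathbb R}\partial_x^\alpha u_1\,\partial_x^\alpha E_1\,dx$ and $\int_{\mathbb R}\partial_x^\alpha u_r\cdot\partial_x^\alpha E_r\,dx$ in $t$, substitute the evolution equations of \eqref{em-2} for $\partial_t u_1$, $\partial_t\rho$, $\partial_t E_1$, $\partial_t u_r$, $\partial_t E_r$, and collect the negative-definite contributions while pushing everything else to the right.

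First, in $\frac{d}{dt}\int\partial_x^\alpha u_1\,\partial_x^{\alpha+1}\rho$, the term $-\frac{P'(\rho+1)}{\rho+1}\partial_x\rho$ of the $u_1$-equation yields the leading piece $-\gamma\|\partial_x^{\alpha+1}\rho\|_{L^2}^2$ with $\gamma=P'(1)>0$ (the difference $\frac{P'(\rho+1)}{\rho+1}-\gamma=O(\rho)$ only gives a cubic remainder), which is the top-order dissipation of $\rho$; inserting $\partial_t\rho=-(\rho+1)\partial_x u_1-u_1\partial_x\rho$ in the other factor and integrating by parts turns the principal term $-\partial_x^{\alpha+2}u_1$ into $+\|\partial_x^{\alpha+1}u_1\|_{L^2}^2\le\|u_1\|_{H^N}^2$; and the $-E_1$, $-u_1$ terms are handled using $\partial_x E_1=-\rho$, which converts $-\int\partial_x^\alpha E_1\,\partial_x^{\alpha+1}\rho$ into $-\|\partial_x^\alpha\rho\|_{L^2}^2\le0$ and leaves $-\int\partial_x^\alpha u_1\,\partial_x^{\alpha+1}\rho$ to be split by Young's inequality between a small fixed fraction of the $\rho$-dissipation and $C\|u_1\|_{H^N}^2$. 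Next, $\frac{d}{dt}\int\partial_x^\alpha u_1\,\partial_x^\alpha E_1$ produces $-\|\partial_x^\alpha E_1\|_{L^2}^2$ from the $-E_1$ term, while $\partial_t E_1=(\rho+1)u_1$ contributes $+\|\partial_x^\alpha u_1\|_{L^2}^2$ plus a cubic remainder; and $\frac{d}{dt}\int\partial_x^\alpha u_r\cdot\partial_x^\alpha E_r$ produces $-\|\partial_x^\alpha E_r\|_{L^2}^2$ from the $-E_r$ term, the remaining linear terms $\mathbb{O}_1u_r-u_r$ being absorbed by Young's inequality into a small fraction of $\|\partial_x^\alpha E_r\|_{L^2}^2$ and $C\|\partial_x^\alpha u_r\|_{L^2}^2$. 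Summing over $0\le\alpha\le N-1$, using $\partial_x^\alpha E_1=-\partial_x^{\alpha-1}\rho$ and $\|\rho\|_{L^2}^2=\|\partial_x E_1\|_{L^2}^2\le\|E_1\|_{H^{N-1}}^2$, these add up to a positive constant times $\|\rho\|_{H^N}^2+\|(E_1,E_r)\|_{H^{N-1}}^2$, which after the usual rescaling of constants is the bracketed dissipation of \eqref{dissipation-rho-E}.

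The one contribution not controlled by this dissipation, and the step I expect to be the main obstacle, is the coupling $\partial_t E_r=\mathbb{O}_1\partial_x B_r+(\rho+1)u_r$ entering the third piece, which generates $\int\partial_x^\alpha u_r\cdot\mathbb{O}_1\partial_x^{\alpha+1}B_r\,dx$: the right-hand side of \eqref{dissipation-rho-E} admits $B_r$ only at the level $\|\partial_x B_r\|_{H^{N-2}}$, i.e. at most $N-1$ derivatives. For $0\le\alpha\le N-2$ one applies Young's inequality directly, $\le\epsilon\|\partial_x^{\alpha+1}B_r\|_{L^2}^2+C\epsilon^{-1}\|\partial_x^\alpha u_r\|_{L^2}^2$, which is admissible since $\partial_x^{\alpha+1}B_r$ carries at most $N-1$ derivatives; for the top index $\alpha=N-1$ one must first integrate by parts to move a derivative onto $u_r$, $\int\partial_x^{N-1}u_r\cdot\mathbb{O}_1\partial_x^{N}B_r=-\int\partial_x^{N}u_r\cdot\mathbb{O}_1\partial_x^{N-1}B_r$, and only then use Young's inequality, paying $\epsilon\|\partial_x^{N-1}B_r\|_{L^2}^2+C\epsilon^{-1}\|\partial_x^N u_r\|_{L^2}^2$. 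This is exactly where the terms $\frac{C}{\epsilon}\|(u_1,u_r)\|_{H^N}^2$ and $\epsilon\|\partial_x B_r\|_{H^{N-2}}^2$ in \eqref{dissipation-rho-E} come from; the smallness of $\epsilon$ is to be exploited later, together with a separate interaction term producing $B_r$-dissipation, when these lemmas are assembled into the full energy inequality \eqref{energy-estimate}.

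It then remains to dispose of the cubic terms, which all have the schematic form $\int(\partial_x^\beta\text{ of a quadratic or higher product})\cdot\partial_x^{\alpha+1}\rho$, $\cdot\,\partial_x^\alpha E_1$, or $\cdot\,\partial_x^\alpha E_r$ with $\beta\le\alpha+1$, arising from the convective and pressure nonlinearities and from $u_r\cdot\mathbb{O}_1B_r$, $u_1\mathbb{O}_1B_r$. I would expand each product by the Leibniz and commutator rules of Lemma \ref{sobolev-ineq}, place the lowest-order factor in $L^\infty$ and bound it by its $H^2$-norm via the one-dimensional embedding $H^1\hookrightarrow L^\infty$ --- in particular $\|B_r\|_{L^\infty}\le C\|B_r\|_{H^2}$, which is why $B_r$ appears in the prefactor $\|(\rho,u_1,u_r,B_r)\|_{H^2}$ --- and bound the remaining factors by the top-order norms $\|(\rho,u_1,u_r)\|_{H^N}$, $\|(E_1,E_r)\|_{H^{N-1}}$, $\|\partial_x B_r\|_{H^{N-2}}$; any prefactor that comes out quadratic in $H^2$-norms is reduced to a linear one using $\|W\|_{H^2}\le\delta$ from \eqref{assumption}. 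Combining the three $\frac{d}{dt}$-identities, summing over $\alpha$, absorbing the fractions of the $E_1$- and $E_r$-dissipation spent on the Young estimates, and relabelling constants then yields \eqref{dissipation-rho-E}. Apart from the derivative-count bookkeeping in the $\mathbb{O}_1\partial_x B_r$ coupling described above, all the estimates are routine variants of those already carried out in Lemmas \ref{zero-energy-lem} and \ref{alpha-energy-lem}.
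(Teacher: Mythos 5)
Your proposal follows essentially the same route as the paper: differentiating (equivalently, testing the equations to produce) the three cross functionals, extracting the dissipation of $\rho$ from the pressure term against $\partial_x^{\alpha+1}\rho$ together with $\partial_x E_1=-\rho$, of $E_1$ and $E_r$ from the damping terms, handling the $\int\partial_x^\alpha u_r\cdot\mathbb{O}_1\partial_x^{\alpha+1}B_r$ coupling by integration by parts plus $\epsilon$-Young so that $B_r$ never exceeds $N-1$ derivatives, and bounding the cubic remainders via Leibniz/commutator estimates with $H^2\hookrightarrow L^\infty$ prefactors. The only differences are cosmetic (you extract $\gamma$ instead of keeping the weight $\frac{P'(\rho+1)}{\rho+1}$, and you integrate by parts on the $B_r$ term only at $\alpha=N-1$ whereas the paper does so for all $\alpha\ge1$), so the argument is correct and matches the paper's proof.
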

			
\begin{proof}
For $0\le \alpha \le N-1$, applying $\partial_x^\alpha$ to the second equation of \eqref{em-2} and taking the inner product with $\partial_x^{\alpha+1}\rho$. Then, we have
\bmas
	&\quad \int_{\mathbb{R}} \partial_t(\partial_x^\alpha u_1)\partial_x^{\alpha+1}\rho\,dx + \int_{\mathbb{R}}\frac{P'(\rho+1)}{\rho +1}\left( \partial_x^{\alpha+1}\rho\right)^2 \,dx +\int_{\mathbb{R}}   \partial_x^\alpha E_1\partial_x^{\alpha+1}\rho\,dx\\
	&= -\int_{\mathbb{R}}  \partial_x^\alpha u_1\partial_x^{\alpha+1}\rho\,dx - \int_{\mathbb{R}} f\partial_x^{\alpha+1}\rho\,dx,
\emas
with
\bmas
	&f = \partial_x^\alpha(u_1\partial_x u_1) - \partial_x^\alpha(u_r\cdot\mathbb{O}_1B_r) + \left[ \partial_x^\alpha\left( \frac{P'(\rho+1)}{\rho+1}\partial_x\rho \right) - \frac{P'(\rho+1)}{\rho+1}\partial_x^{\alpha+1}\rho\right].
\emas
Notice that
\bmas
	&\partial_t(\partial_x^\alpha u_1)\partial_x^{\alpha+1}\rho =\partial_t\left( \partial_x^\alpha u_1\partial^{\alpha+1}_x\rho\right) +\partial_t(\partial_x^\alpha\rho)\partial_x^{\alpha+1}u_1-\partial_x\left[ \partial_t(\partial_x^\alpha\rho)\partial_x^\alpha u_1\right].
\emas
Substituting it into above and using the relation $\partial_x^{\alpha+1}E_1 = - \partial_x^\alpha\rho$, we have
\bmas
	&\quad \quad\frac{d}{dt} \int_{\mathbb{R}} \partial_x^\alpha u_1\partial_x^{\alpha+1}\rho\,dx +
	\int_{\mathbb{R}} \partial_x^\alpha\partial_t \rho\partial_x^{\alpha+1}u_1\,dx +  \int_{\mathbb{R}}\frac{P'(\rho+1)}{\rho +1}\left( \partial_x^{\alpha+1}\rho\right)^2 \,dx + \|\partial_x^{\alpha}\rho\|_{L^2}^2\\
	&= -\int_{\mathbb{R}}  \partial_x^\alpha u_1\partial_x^{\alpha+1}\rho\,dx - \int_{\mathbb{R}} f\partial_x^{\alpha+1}\rho\,dx.
\emas
Substituting $\partial_t\rho$ in the first equation of \eqref{em-2} into above, one has
\bmas
	&\quad\frac{d}{dt} \int_{\mathbb{R}} \partial_x^\alpha u_1\partial_x^{\alpha+1}\rho\,dx +  \int_{\mathbb{R}}\frac{P'(\rho+1)}{\rho +1}\left( \partial_x^{\alpha+1}\rho\right)^2 \,dx + \|\partial_x^{\alpha}\rho\|_{L^2}^2 \\
	&=\| \partial_x^{\alpha+1}u_1\|_{L^2}^2 - \int_{\mathbb{R}}  \partial_x^{\alpha+1}(\rho u_1)\partial_x^{\alpha+1}u_1\,dx -\int_{\mathbb{R}}  \partial_x^\alpha u_1\partial_x^{\alpha+1}\rho\,dx  - \int_{\mathbb{R}} f\partial_x^{\alpha+1}\rho\,dx\\
	&\le \| \partial_x^{\alpha+1}u_1\|_{L^2}^2 +\| \partial_x^{\alpha+1}(\rho u_1)\|_{L^2}\| \partial_x^{\alpha+1}u_1\|_{L^2}\\
	&\quad + \frac{1}{2}\| \partial_x^{\alpha+1} u_1 \|_{L^2}^2 +\frac12\| \partial_x^{\alpha}\rho \|_{L^2}^2 + \|  f\|_{L^2}\| \partial_x^{\alpha+1}\|_{L^2}.
\emas
For $\alpha =0$, we have
\bmas
	\| f \|_{L^2} \le \| u_1 \|_{L^\infty}\| \partial_x u_1\|_{L^2} + \| B_r \|_{L^\infty}\| \partial_x u_r\|_{L^2},
\emas
and for $\alpha \ge 1$, by Lemma \ref{sobolev-ineq}, we have
\bmas
	\| f\|_{L^2} &\le C\| u_1\|_{L^\infty}\| \partial_x^{\alpha+1} u_1\|_{L^2} +C\| \partial_x u_1 \|_{L^\infty}\| \partial_x^{\alpha} u_1\|_{L^2} + C\| B_r \|_{L^\infty}\| \partial_x^{\alpha} u_r\|_{L^2}  \\
	&\quad +C\| u_r \|_{L^\infty}\| \partial_x^{\alpha} B_r\|_{L^2}  +C\| \partial_x \rho \|_{L^\infty}\| \partial_x^{\alpha} \rho\|_{L^2}.
\emas
Taking summation of the above estimate over $ 0\le \alpha \le N-1$ , we obtain
\bma \label{dissipation-rho}
	&\quad \sum_{ \alpha=0}^{N-1} 	\frac{d}{dt}\int_{\mathbb{R}} \partial_x^\alpha u_1\partial_x^{\alpha+1}\rho\,dx + \| \rho \|_{H^N}^2  \nnm\\
	&\le C_1 \| u_1\|_{H^N}^2 + C_1\| (\rho, u_1, u_r, B_r)\|_{H^2} \left(\| (\rho,u_1,u_r)\|_{H^N}^2 + \| \partial_x B_r\|_{H^{N-2}}^2\right).
\ema

Next, for $0\le \alpha \le N-1$, applying $\partial_x^\alpha$ to \eqref{em-2} and taking the inner product of the second and fourth equations with $\partial_x^\alpha E_1$ and $\partial_x^\alpha u_1$ respectively. Then, we add them up and use equation $\partial_x^{\alpha+1}E_1=-\partial_x^\alpha \rho$, one has
\bmas
	&\quad \frac{d}{dt}\int_{\mathbb{R}} \partial_x^\alpha u_1\partial_x^\alpha E_1\,dx + \| \partial_x^\alpha E_1 \|_{L^2}^2 +\int_{\mathbb{R}}\frac{P'(\rho+1)}{\rho+1} \left(\partial_x^{\alpha+1}\rho\right)^2\,dx \\
	&= -\int_{\mathbb{R}}  \partial_x^\alpha u_1 \partial_x^\alpha E_1\,dx - \int_{\mathbb{R}} f\partial_x^\alpha E_1 \,dx +\int_{\mathbb{R}} \partial_x^\alpha [(\rho +1)u_1]\partial_x^\alpha u_1 \,dx\\
	&\le \frac{1}{2}\| \partial_x^\alpha u_1\|_{L^2}^2 + \frac{1}{2}\| \partial_x^\alpha E_1\|_{L^2}^2 + \| f\|_{L^2}\| \partial_x^\alpha E_1\|_{L^2} + \| \partial_x^\alpha u_1\|_{L^2}^2 +\| \partial_x^\alpha (\rho u_1)\|_{L^2}\| \partial_x^\alpha u_1\|_{L^2}.
\emas
After taking summation of the above estimate over $0\le \alpha < N-1$, it yields
\bma \label{dissipation-e1}
	&\quad \sum_{ \alpha=0}^{N-1} 	\frac{d}{dt}\int_{\mathbb{R}} \partial_x^\alpha u_1\partial_x^{\alpha}E_1\,dx + \| E_1 \|_{H^{N-1}}^2 +\| \rho \|_{H^{N-1}}^2 \nnm \\
	&\le C_2 \| u_1\|_{H^{N-1}}^2 + C_2\| (\rho, u_1, u_r, B_r)\|_{H^2} \left(\| (\rho,u_1,u_r)\|_{H^N}^2 + \| E_1 \|_{H^{N-1}}^2 + \| \partial_x B_r\|_{H^{N-2}}^2\right).
\ema

And next, for $0\le \alpha \le N-1$,  applying $\partial_x^\alpha$ to the third equation of \eqref{em-2}, taking the inner product  with $\partial_x^\alpha E_r$ in $L^2{(\mathbb{R})}$ and then using the fifth equation of \eqref{em-2} yields
\bmas
	&\quad \frac{d}{dt}\int_{\mathbb{R}} \partial_x^\alpha u_r\cdot\partial_x^\alpha E_r\,dx + \| \partial_x^\alpha E_r \|_{L^2}^2 \\
	&= -\int_{\mathbb{R}} (\mathbb{I}_2-\mathbb{O}_1)\partial_x^\alpha u_r \cdot \partial_x^\alpha E_r\,dx + \int_{\mathbb{R}} \mathbb{O}_1\partial_x^{\alpha+1}B_r\cdot \partial_x^\alpha u_r\,dx  \\
	&\quad- \int_{\mathbb{R}} \partial_x^\alpha(u_1\partial_x u_r+  u_1\mathbb{O}_1B_r)\cdot\partial_x^\alpha E_r\,dx +  \int_{\mathbb{R}} \partial_x^\alpha(\rho u_r)\cdot\partial_x^\alpha u_r \,dx + \| \partial_x^\alpha u_r\|_{L^2}^2.
\emas
We have the following estimates:
\bmas
	&-\int_{\mathbb{R}} (\mathbb{I}_2-\mathbb{O}_1 )\partial_x^\alpha u_r \cdot \partial_x^\alpha E_r\,dx \le 2\| \partial_x^\alpha u_r\|_{L^2}^2 + \frac{1}{2}\| \partial_x^\alpha E_r\|_{L^2}^2,\\
	&\int_{\mathbb{R}} \mathbb{O}_1\partial_x^{\alpha+1}B_r\cdot\partial_x^\alpha u_r\,dx \le
	\begin{cases}
		\frac{C}{\epsilon}\| u_r\|_{L^2}^2 + \epsilon\| \partial_x B_r\|_{L^2}^2,& \alpha=0,\\
		\frac{C}{\epsilon}\| \partial_x^{\alpha+1} u_r\|_{L^2}^2 + \epsilon\| \partial_x^\alpha B_r\|_{L^2}^2,& 1\le \alpha \le N-1 .
	\end{cases}
\emas
Here, $\epsilon >0$ is a constant. For $\alpha=0$, we have
$$
	\| u_1\partial_x u_r+u_1\mathbb{O}_1B_r\|_{L^2} \le \|u_1\|_{L^\infty}\| \partial_x u_r\|_{L^2} + \|B_r\|_{L^\infty}\|u_1\|_{L^2},
$$
and for $\alpha \ge 1$, one has
\bmas
	\| \partial_x^\alpha (u_1\partial_x u_r+u_1\mathbb{O}_1B_r)\|_{L^2} &\le \| u_1\|_{L^\infty}\| \partial_x^{\alpha+1}u_r\|_{L^2} +\| \partial_x u_r\|_{L^\infty}\| \partial_x^\alpha u_1\|_{L^2}\\
	&\quad + \| B_r\|_{L^\infty}\| \partial_x^{\alpha}u_1\|_{L^2} +\| u_1\|_{L^\infty}\| \partial_x^\alpha B_r\|_{L^2}.
\emas
Further taking summation over $0\le \alpha \le N-1$, we have
\bma \label{dissipation-er}
	&\quad \sum_{ \alpha=0}^{N-1}\frac{d}{dt}\int_{\mathbb{R}} \partial_x^\alpha u_r \cdot \partial_x^\alpha E_r\,dx +\| E_r \|_{H^{N-1}}^2 \nnm\\
	&\le \frac{C_3}{\epsilon}\| u_r\|_{H^N}^2 +\epsilon\| \partial_x B_r\|_{H^{N-2}}^2 \nnm\\
	&\quad +C_3\|(\rho,u_1,u_r,B_r)\|_{H^2}\left( \| (u_1,u_r)\|_{H^N} + \| (\rho,E_r)\|_{H^{N-1}}^2 + \| \partial_x B_r\|_{H^{N-2}}^2\right).
\ema
Thus, \eqref{dissipation-rho-E} follows from taking summation of \eqref{dissipation-rho}, \eqref{dissipation-e1} and \eqref{dissipation-er}. Therefore, we complete this proof.
\end{proof}
			
\begin{lem}
It holds that
\bma \label{dissipation-Br}
\begin{split}
	&\quad -\sum_{ \alpha=0}^{N-2} \frac{d}{dt} \int_{\mathbb{R}} \partial_x^\alpha E_r\cdot\mathbb{O}_1\partial_x^{\alpha+1}B_r\,dx  + \| \partial_xB_r\|_{H^{N-2}}^2 \\
	&\le C  (\| u_r\|_{H^{N-2}}^2 + \| \partial_xE_r\|_{H^{N-2}}^2)  + C \| (\rho,u_r)\|_{L^\infty} (\| (\rho,u_r)\|_{H^{N-2}}^2 + \| \partial_xB_r\|_{H^{N-2}}^2 ) .
\end{split}
\ema
\end{lem}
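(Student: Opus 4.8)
The plan is to recover the missing dissipation $\|\partial_x B_r\|_{H^{N-2}}$, which is not produced by the basic energy identities \eqref{zero-energy} and \eqref{alpha-energy} because the Maxwell block $(E_r,B_r)$ carries no direct damping on $B_r$, by means of a Kawashima‑type interaction functional between $E_r$ and $\partial_x B_r$, namely
\begin{equation*}
	\mathcal{I}_\alpha(t):=\int_{\mathbb{R}}\partial_x^\alpha E_r\cdot\mathbb{O}_1\partial_x^{\alpha+1}B_r\,dx,\qquad 0\le\alpha\le N-2,
\end{equation*}
in the same spirit as the cross terms appearing in \eqref{dissipation-rho-E}.

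First I would establish a structural identity. Fix $\alpha$, differentiate $\mathcal{I}_\alpha$ in time and substitute $\partial_t\partial_x^\alpha E_r=\mathbb{O}_1\partial_x^{\alpha+1}B_r+\partial_x^\alpha[(\rho+1)u_r]$ (the fifth equation of \eqref{em-2}) and $\partial_t\partial_x^{\alpha+1}B_r=-\mathbb{O}_1\partial_x^{\alpha+2}E_r$ (the sixth equation). Using the elementary relations $\mathbb{O}_1^{T}\mathbb{O}_1=\mathbb{I}_2$ (so $|\mathbb{O}_1\partial_x^{\alpha+1}B_r|^2=|\partial_x^{\alpha+1}B_r|^2$) and $\mathbb{O}_1^2=-\mathbb{I}_2$, together with one integration by parts giving $\int_{\mathbb{R}}\partial_x^\alpha E_r\cdot\partial_x^{\alpha+2}E_r\,dx=-\|\partial_x^{\alpha+1}E_r\|_{L^2}^2$, one obtains
\begin{equation*}
	\frac{d}{dt}\mathcal{I}_\alpha=\|\partial_x^{\alpha+1}B_r\|_{L^2}^2-\|\partial_x^{\alpha+1}E_r\|_{L^2}^2+\int_{\mathbb{R}}\partial_x^\alpha\big[(\rho+1)u_r\big]\cdot\mathbb{O}_1\partial_x^{\alpha+1}B_r\,dx,
\end{equation*}
equivalently $-\tfrac{d}{dt}\mathcal{I}_\alpha+\|\partial_x^{\alpha+1}B_r\|_{L^2}^2=\|\partial_x^{\alpha+1}E_r\|_{L^2}^2-\int_{\mathbb{R}}\partial_x^\alpha[(\rho+1)u_r]\cdot\mathbb{O}_1\partial_x^{\alpha+1}B_r\,dx$. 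This is the step that produces the $\|\partial_x B_r\|^2$ dissipation.

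Next I would estimate the right-hand side. Summed over $\alpha$, the term $\|\partial_x^{\alpha+1}E_r\|_{L^2}^2$ gives exactly $\|\partial_x E_r\|_{H^{N-2}}^2$. For the last term, split $(\rho+1)u_r=u_r+\rho u_r$. The linear part $\int_{\mathbb{R}}\partial_x^\alpha u_r\cdot\mathbb{O}_1\partial_x^{\alpha+1}B_r\,dx$ is handled directly by Young's inequality, $\le\varepsilon\|\partial_x^{\alpha+1}B_r\|_{L^2}^2+C_\varepsilon\|\partial_x^\alpha u_r\|_{L^2}^2$, the first term being absorbed on the left and the second contributing to $C\|u_r\|_{H^{N-2}}^2$ — crucially, no integration by parts onto $B_r$ is performed, so only $N-2$ derivatives of $u_r$ ever occur. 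For the nonlinear part, Lemma \ref{sobolev-ineq}(1) gives $\|\partial_x^\alpha(\rho u_r)\|_{L^2}\le C(\|\rho\|_{L^\infty}\|\partial_x^\alpha u_r\|_{L^2}+\|u_r\|_{L^\infty}\|\partial_x^\alpha\rho\|_{L^2})$, hence by Young's inequality $|\int_{\mathbb{R}}\partial_x^\alpha(\rho u_r)\cdot\mathbb{O}_1\partial_x^{\alpha+1}B_r\,dx|\le C\|(\rho,u_r)\|_{L^\infty}(\|\partial_x^\alpha\rho\|_{L^2}^2+\|\partial_x^\alpha u_r\|_{L^2}^2+\|\partial_x^{\alpha+1}B_r\|_{L^2}^2)$; under \eqref{assumption} one has $n=\rho+1\ge\mathrm{const}>0$ and $\|(\rho,u_r)\|_{L^\infty}\le C\delta$ small, so its $\|\partial_x^{\alpha+1}B_r\|^2$-part is likewise absorbed on the left. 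Summing over $\alpha=0,\dots,N-2$ and using $\sum_{\alpha=0}^{N-2}\|\partial_x^{\alpha+1}f\|_{L^2}^2=\|\partial_x f\|_{H^{N-2}}^2$ for $f=B_r,E_r$ yields \eqref{dissipation-Br}.

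The main obstacle is precisely the source term $\int_{\mathbb{R}}\partial_x^\alpha[(\rho+1)u_r]\cdot\mathbb{O}_1\partial_x^{\alpha+1}B_r\,dx$: it carries $\alpha+1$ derivatives of $B_r$, the same top order as the dissipative term just extracted, so it cannot be treated as a lower-order remainder. The resolution is to keep $B_r$ at level $\alpha+1$ throughout (no integration by parts, which would push $u_r$ to order $N-1$), peel off the genuinely linear part in $u_r$ and absorb it by Young's inequality at the cost of only a harmless constant in front of $\|\partial_x B_r\|_{H^{N-2}}^2$, and control the nonlinear remainder $\rho u_r$ using the a priori smallness $\|(\rho,u_r)\|_{L^\infty}\le C\delta$. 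Everything else is routine Maxwell-type energy bookkeeping via $\mathbb{O}_1^{T}\mathbb{O}_1=\mathbb{I}_2$, $\mathbb{O}_1^2=-\mathbb{I}_2$ and integration by parts.
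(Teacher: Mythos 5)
Your proposal is correct and is essentially the paper's own argument: the paper obtains the identical identity by pairing the $\partial_x^\alpha$-differentiated fifth and sixth equations of \eqref{em-2} with $-\mathbb{O}_1\partial_x^{\alpha+1}B_r$ and $-\mathbb{O}_1\partial_x^{\alpha+1}E_r$ (equivalent to your direct differentiation of $\mathcal{I}_\alpha$), then treats $\int\partial_x^\alpha u_r\cdot\mathbb{O}_1\partial_x^{\alpha+1}B_r\,dx$ by Young's inequality and $\partial_x^\alpha(\rho u_r)$ by the product estimate of Lemma \ref{sobolev-ineq}, exactly as you do. The only cosmetic difference is that you absorb the small nonlinear $\|\partial_x^{\alpha+1}B_r\|_{L^2}^2$ contribution on the left using \eqref{assumption}, whereas the paper simply leaves it on the right with the factor $\|(\rho,u_r)\|_{L^\infty}$, as in the stated bound \eqref{dissipation-Br}.
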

			
\begin{proof}
After applying $\partial_x^\alpha$ to \eqref{em-2}, we take the inner product of the fifth and sixth equations with $-\mathbb{O}_1\partial_x^{\alpha+1} B_r$ and $-\mathbb{O}_1\partial_x^{\alpha+1}E_r$  in $L^2{(\mathbb{R})}$. Then, adding up these two equations, it yields
\bmas
	&\quad-\frac{d}{dt}\int_{\mathbb{R}} \partial_x^\alpha E_r\mathbb{O}_1\partial_x^{\alpha+1}B_r\,dx + \| \partial_x^{\alpha+1}B_r\|_{L^2}^2 + \int_{\mathbb{R}}  \partial_x^\alpha u_r\mathbb{O}_1\partial_x^{\alpha+1}B_r\,dx \\
	&=\| \partial_x^{\alpha+1}E_r\|_{L^2}^2 -  \int_{\mathbb{R}} \partial_x^\alpha(\rho u_r) \mathbb{O}_1\partial_x^{\alpha+1}B_r\,dx.
\emas
We have the estimates as follow:
\bmas
	\int_{\mathbb{R}}  \partial_x^\alpha u_r\mathbb{O}_1\partial_x^{\alpha+1}B_r\,dx &\le \frac12 \| \partial_x^{\alpha+1}B_r\|_{L^2}^2 + C\| \partial_x^\alpha u_r\|_{L^2}^2,\\
	\int_{\mathbb{R}} \partial_x^\alpha(\rho u_r)\mathbb{O}_1\partial_x^{\alpha+1}B_r\,dx &\le C\left( \| \rho \|_{L^\infty}\| \partial_x^\alpha u_r\|_{L^2} + \| u_r \|_{L^\infty}\| \partial_x^\alpha \rho \|_{L^2}\right)\| \partial_x^{\alpha+1}B_r\|_{L^2}.
\emas
Thus, \eqref{dissipation-Br} follows from taking summation over $0\le \alpha \le N-2$.
\end{proof}
			
\begin{prop} \label{a priori estimate}
Let $N\ge2$ and suppose $W(t,x)$ be a solution to \eqref{em-2} with initial data $W(0)$, satisfying  for any given $T>0$ with
\be
	\sup_{0\le t \le T}\| W(t)\|_{H^N} \le \delta,
\ee
where $\delta$ is a positive constant. Then, the following a priori estimate holds for $t\in[0,T]$:
\be
	\| W(t) \|_{H^N}^2 + \int_0^t \left( \| (\rho,u_1,u_r)\|_{H^N}^2 + \| (E_1,E_r)\|_{H^{N-1}}^2 + \| \partial_x B_r\|_{H^{N-2}}^2\right) \,d\tau \le C\| W(0)\|_{H^N}^2.
\ee
\end{prop}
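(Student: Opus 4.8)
The plan is to construct a Lyapunov functional $\mathcal{E}(t)$ equivalent to $\|W(t)\|_{H^N}^2$ that, along solutions of \eqref{em-2} under the smallness hypothesis \eqref{assumption}, obeys a closed differential inequality $\frac{d}{dt}\mathcal{E}(t)+c\,\mathcal{D}(t)\le 0$, where $\mathcal{D}(t)=\|(\rho,u_1,u_r)\|_{H^N}^2+\|(E_1,E_r)\|_{H^{N-1}}^2+\|\partial_x B_r\|_{H^{N-2}}^2$ is the dissipation appearing in the statement. Integrating this inequality on $[0,t]$ and using $\mathcal{E}\sim\|W\|_{H^N}^2$ gives $\|W(t)\|_{H^N}^2+\int_0^t\mathcal{D}(\tau)\,d\tau\le C\,\mathcal{E}(0)\le C\|W(0)\|_{H^N}^2$, which is the assertion. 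All the material needed is already in place: the pure energy identities and the interaction (dissipation-producing) identities proved above.

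\emph{Step 1: the pure energy part.} Summing the differential identities \eqref{dt-zero-enrgy} and \eqref{dt-alpha-energy} over $0\le\alpha\le N$, let $\mathcal{E}_1(t)$ be the resulting weighted energy, i.e.\ the sum over $\alpha$ of $\int_{\mathbb{R}}\frac{P'(\rho+1)}{\rho+1}(\partial_x^\alpha\rho)^2\,dx$, $\int_{\mathbb{R}}(\rho+1)|\partial_x^\alpha(u_1,u_r)|^2\,dx$ and $\|\partial_x^\alpha(E_1,E_r,B_r)\|_{L^2}^2$. Since \eqref{assumption} gives $\rho+1\ge c_0>0$ with $P'(\rho+1)$ bounded above and below, $\mathcal{E}_1(t)\sim\|W(t)\|_{H^N}^2$; moreover, in one space dimension $H^2\hookrightarrow W^{1,\infty}$, so $M_1$, $M_2$, $\|\partial_x(\rho,u_1,u_r)\|_{L^\infty}$ and $\|(\rho,u_1,u_r,B_r)\|_{L^\infty}$ are all $\le C\delta$. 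Estimating $g_1,\dots,g_4$ as in Lemma \ref{alpha-energy-lem} (and the corresponding low-order terms $I_1,I_2$ of Lemma \ref{zero-energy-lem}) --- each being a product of a factor $\le C\delta$ with quantities contained in $\mathcal{D}(t)$, where at the top order one extracts the required smallness of $\|\partial_x^N(E_r,B_r)\|_{L^2}$ from $\|W\|_{H^N}\le\delta$ itself --- one arrives at
\be\label{plan-step1}
\frac{d}{dt}\mathcal{E}_1(t)+c_1\|(u_1,u_r)\|_{H^N}^2\le C\delta\,\mathcal{D}(t).
\ee
This supplies the full $H^N$ energy, but dissipation only for $(u_1,u_r)$.

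\emph{Step 2: recovering dissipation and choosing the constants.} Introduce the interaction functionals $\mathcal{E}_2(t)=\sum_{\alpha=0}^{N-1}\int_{\mathbb{R}}(\partial_x^\alpha u_1\,\partial_x^{\alpha+1}\rho+\partial_x^\alpha u_1\,\partial_x^\alpha E_1+\partial_x^\alpha u_r\cdot\partial_x^\alpha E_r)\,dx$ and $\mathcal{E}_3(t)=-\sum_{\alpha=0}^{N-2}\int_{\mathbb{R}}\partial_x^\alpha E_r\cdot\mathbb{O}_1\partial_x^{\alpha+1}B_r\,dx$, which satisfy $|\mathcal{E}_2(t)|+|\mathcal{E}_3(t)|\le C\|W(t)\|_{H^N}^2$, and set $\mathcal{E}=\mathcal{E}_1+\kappa_1\mathcal{E}_2+\kappa_1\kappa_2\mathcal{E}_3$ with $\kappa_1,\kappa_2>0$ small; for $\kappa_1$ small this is equivalent to $\|W\|_{H^N}^2$. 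Adding $\kappa_1$ times \eqref{dissipation-rho-E} and $\kappa_1\kappa_2$ times \eqref{dissipation-Br} to \eqref{plan-step1} produces on the left, besides $c_1\|(u_1,u_r)\|_{H^N}^2$, the new dissipation $\kappa_1(\|\rho\|_{H^N}^2+\|(E_1,E_r)\|_{H^{N-1}}^2)+\kappa_1\kappa_2\|\partial_xB_r\|_{H^{N-2}}^2$, at the cost on the right of the cross terms $\tfrac{C\kappa_1}{\epsilon}\|(u_1,u_r)\|_{H^N}^2+\kappa_1\epsilon\|\partial_xB_r\|_{H^{N-2}}^2+C\kappa_1\kappa_2(\|u_r\|_{H^{N-2}}^2+\|\partial_xE_r\|_{H^{N-2}}^2)$ (with $\epsilon$ the free parameter of \eqref{dissipation-rho-E}) plus cubic remainders of size $C\delta\,\mathcal{D}$. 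One now fixes the constants hierarchically: first $\kappa_2$ with $C\kappa_2\le\tfrac14$, so $C\kappa_1\kappa_2\|\partial_xE_r\|_{H^{N-2}}^2\le\tfrac14\kappa_1\|(E_1,E_r)\|_{H^{N-1}}^2$; then $\epsilon=\kappa_2/4$, so $\kappa_1\epsilon\|\partial_xB_r\|_{H^{N-2}}^2\le\tfrac14\kappa_1\kappa_2\|\partial_xB_r\|_{H^{N-2}}^2$; then $\kappa_1$ small so that $C\kappa_1(\epsilon^{-1}+\kappa_2)\|(u_1,u_r)\|_{H^N}^2\le\tfrac12 c_1\|(u_1,u_r)\|_{H^N}^2$; finally $\delta$ small so that every $C\delta\,\mathcal{D}$ contribution is absorbed into half of the surviving dissipation. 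This yields $\frac{d}{dt}\mathcal{E}(t)+c_2\,\mathcal{D}(t)\le 0$ for some $c_2>0$, and integrating in $t$ together with $\mathcal{E}\sim\|W\|_{H^N}^2$ completes the proof.

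\emph{Main obstacle.} The individual bounds are routine Moser-type product and commutator estimates; the genuine difficulty is two-fold. First, the order in which $\epsilon$, $\kappa_1$, $\kappa_2$ and $\delta$ are frozen must be exactly right so that every cross term and every nonlinearity lands strictly inside $\mathcal{D}(t)$. Second, and intertwined with the first, is the weak dissipation of the Maxwell block: \eqref{dissipation-Br} controls only $\|\partial_xB_r\|_{H^{N-2}}$, one derivative short of the energy norm $\|B_r\|_{H^N}$, so at the top order $\partial_x^N B_r$ (and likewise $\partial_x^N E_r$) there is no dissipation available, and the corresponding quadratic remainders can only be closed by borrowing smallness from the a priori assumption $\|W\|_{H^N}\le\delta$ rather than from a low-order Sobolev embedding.
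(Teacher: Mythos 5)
Your proposal is correct and follows essentially the same route as the paper: the same energy identities, the same interaction functionals $\sum_\alpha\int(\partial_x^\alpha u_1\partial_x^{\alpha+1}\rho+\partial_x^\alpha u_1\partial_x^\alpha E_1+\partial_x^\alpha u_r\cdot\partial_x^\alpha E_r)\,dx$ and $-\sum_\alpha\int\partial_x^\alpha E_r\cdot\mathbb{O}_1\partial_x^{\alpha+1}B_r\,dx$, the same hierarchical tuning of $\epsilon,\kappa_1,\kappa_2,\delta$, and the same use of the a priori bound $\|W\|_{H^N}\le\delta$ to handle the undissipated top-order derivatives of $(E_1,E_r,B_r)$. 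The only cosmetic differences are that you combine the estimates at the differential-inequality level before integrating (the paper integrates each lemma in $t$ first) and you write the weights as $\kappa_1,\kappa_1\kappa_2$ instead of the paper's $\kappa_1=\epsilon^2>\kappa_2$.
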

			
\begin{proof}
Firstly, taking  $\eqref{zero-energy}+ \sum_{ \alpha=1}^{N}\eqref{alpha-energy}$, it yields
\be\label{estimate1}
	\| W(t)\|_{H^N}^2 + \int_0^t \| (u_1,u_r)(s)\|_{H^N}^2 ds \le C\| W(0)\|_{H^N}^2+CH_1\int_0^t\| (\rho,u_1,u_r)(s)\|_{H^N}^2ds,
\ee
where
$$
	H_1 := \sup_{0 \le s \le t}\left( M_2 + \| B_r\|_{L^\infty}+\| \partial_x(E_1,E_r,B_r)\|_{H^{N-1}}\right),
$$
with $M_2 = \| \partial_x u_1\|_{L^\infty}\| \rho\|_{L^\infty} +\| \partial_x\rho\|_{L^\infty}\| u_1\|_{L^\infty}+\| \partial_x (\rho,u_1,u_r)\|_{L^\infty}.$
				
Next, integrating \eqref{dissipation-rho-E} and \eqref{dissipation-Br} with respect to $t$, it yields
\bma \label{estimate2}
	&\quad \sum_{ \alpha=0}^{N-1} \left(\int_{\mathbb{R}} \partial_x^\alpha u_1\partial_x^{\alpha+1}\rho+\partial_x^\alpha u_1\partial_x^{\alpha}E_1 + \partial_x^\alpha u_r\cdot\partial_x^{\alpha}E_r\,dx\right) + \int_0^t \| \rho\|_{H^N}^2 + \| (E_1,E_r)\|_{H^{N-1}}^2ds  \nnm\\
	&\le C\| (\rho,u_1,u_r,E_1,E_r)(0)\|_{H^N}^2 + \frac{C}{\epsilon}\int_0^t\| (u_1,u_r)\|_{H^N}^2ds + \epsilon\int_0^t \| \partial_x B_r\|_{H^{N-2}}^2ds \nnm\\
	&\quad + CH_2\int_0^t \| (\rho,u_1,u_r)\|_{H^N}^2 + \| (E_1,E_r)\|_{H^{N-1}}^2 + \| \partial_xB_r\|_{H^{N-2}}^2ds,
\ema
and
\bma \label{estimate3}
	&\quad -\sum_{ \alpha=0}^{N-2}\int_{\mathbb{R}} \partial_x^\alpha E_r\cdot\mathbb{O}_1\partial_x^{\alpha+1}B_r\,dx + \int_0^t\| \partial_xB_r\|_{H^{N-2}}^2ds \nnm\\
	&\le C\| (E_r,\partial_xB_r)(0)\|_{H^{N-2}}^2 + C\int_0^t\| u_r\|_{H^{N-2}}^2 ds  +C\int_0^t\| \partial_xE_r\|_{H^{N-2}}^2 ds\nnm\\
	&\quad  +CH_2 \int_0^t\| (\rho,u_1)\|_{H^{N-2}}\| \partial_xB_r\|_{H^{N-2}}ds,
\ema
where
$$
	H_2:=\sup_{0\le s\le t}\| (\rho,u_1,u_r,B_r)\|_{H^2}.
$$
And then, summing up the estimates $\eqref{estimate1}, \kappa_1\times\eqref{estimate2}$ and $\kappa_2\times\eqref{estimate3}$, we have
\bmas
	&\quad \| W\|_{H^N}^2 + \kappa_1\sum_{ \alpha=0}^{N-1} \left(\int_{\mathbb{R}} \partial_x^\alpha u_1\partial_x^{\alpha+1}\rho+\partial_x^\alpha u_1\partial_x^{\alpha}E_1 + \partial_x^\alpha u_r\cdot\partial_x^{\alpha}E_r\,dx\right)\\
 &\quad-\kappa_2\sum_{ \alpha=0}^{N-2}\int_{\mathbb{R}} \partial_x^\alpha E_r\cdot\mathbb{O}_1\partial_x^{\alpha+1}B_r\,dx\\
	&\quad +\int_0^t  \| (u_1,u_r) \|_{H^N}^2 + \kappa_1\| \rho\|_{H^N}^2 +  \kappa_1\| (E_1,E_r)\|_{H^{N-1}}^2 + \kappa_2\| \partial_xB_r\|_{H^{N-2}}^2ds\\
	&\le C\| W(0)\|_{H^N}^2 +\frac{\kappa_1+\epsilon\kappa_2}{\epsilon}C\int_0^t\| (u_1,u_r) \|_{H^N}^2ds +C\kappa_2\int_0^t\| \partial_xE_r\|_{H^{N-2}}^2ds \\
	&\quad + \epsilon\kappa_1\int_0^t \| \partial_x B_r\|_{H^{N-2}}^2ds + C(H_1+H_2 )\int_0^t \| (\rho,u_1,u_r)\|_{H^N}^2 + \| (E_1,E_r)\|_{H^{N-1}}^2 + \| \partial_xB_r\|_{H^{N-2}}^2ds.
\emas
Here, $0< \kappa_2 < \kappa_1$ are constants. We take $\kappa_1 = \epsilon^2$ such that $\kappa_1+\kappa_2 \ll 1$ and $\epsilon\kappa_1 < \kappa_2$ for $0<\epsilon \ll 1$. Thus, we obtain
\bmas
	&\quad\| W\|_{H^N}^2 + \int_0^t  \| (u_1,u_r)\|_{H^N}^2 + \| \rho\|_{H^N}^2 +  \| (E_1,E_r)\|_{H^{N-1}}^2 + \| \partial_xB_r\|_{H^{N-2}}^2ds\\
	&\le  C_1 \| W(0)\|_{H^N}^2+ C_1(H_1+H_2)\int_0^t \| (\rho,u_1,u_r)\|_{H^N}^2 + \| (E_1,E_r)\|_{H^{N-1}}^2 + \| \partial_xB_r\|_{H^{N-2}}^2ds.
\emas
We notice that $ H_1,H_2\le C_2 \displaystyle\sup_{0\le s\le t}\| W\|_{H^N} $ for $N\ge 2$. Therefore, we have
\bmas
	&\quad\| W\|_{H^N}^2 + \int_0^t  \| \rho\|_{H^N}^2 + \| (u_1,u_r)\|_{H^N}^2 +  \| (E_1,E_r)\|_{H^{N-1}}^2 + \| \partial_xB_r\|_{H^{N-2}}^2ds\\
	&\le  C_1 \| W(0)\|_{H^N}^2 +C_3\sup_{0\le s\le t}\| W\|_{H^N} \int_0^t \| (\rho,u_1,u_r)\|_{H^N}^2 +  \| (E_1,E_r)\|_{H^{N-1}}^2 + \| \partial_xB_r\|_{H^{N-2}}^2 \,ds.
\emas
We take $\delta\le\frac{1}{2C_3}$. Under the assumption $\displaystyle\sup_{0\le s\le t}\| W\|_{H^N}^2 \le \delta$, we get the following estimate:
\be
	\| W \|_{H^N}^2 + \int_0^t \left( \| (\rho,u_1,u_r)\|_{H^N}^2 + \| (E_1,E_r)\|_{H^{N-1}}^2 + \| \partial_x B_r\|_{H^{N-2}}^2\right) \,d\tau \le C_4\| W(0)\|_{H^N}^2.
\ee
And then, we done with the proof of Proposition \eqref{a priori estimate}.
\end{proof}

Before giving the proof of Theorem \ref{global-tm}, we show the following local existence result of system \eqref{em-2} which can refer \cite{Kato} and \cite{Taylor} (see section 1 and section 2, chapter 16).
\begin{lem} \label{local}
	Suppose that $W(0) \in H^N(\mathbb{R})$ satisfies \eqref{init-compatible} for $N>\frac{3}{2}$. Then, there is $T>0$ such that the system \eqref{em-2}--\eqref{init} admits a unique solution on $[0,T)$ with
	$$
		W \in C([0,T);H^N(\mathbb{R}))\cap C^1([0,T);H^{N-1}(\mathbb{R})).
	$$
\end{lem}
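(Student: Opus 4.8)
The plan is to recognize the evolution part of \eqref{em-2} as a quasilinear symmetric hyperbolic system with smooth lower-order terms, apply the classical local well-posedness theory of Kato and Taylor (\cite{Kato}, \cite{Taylor}), and then check separately that the elliptic constraint \eqref{init-compatible} is propagated by the flow.

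\textbf{Step 1 (Symmetrization).} Write $W=(\rho,u_1,u_r,E_1,E_r,B_r)^{T}$ and isolate the terms carrying a spatial derivative in \eqref{em-2}: these are $u_1\partial_x\rho$ and $(\rho+1)\partial_x u_1$ in the first two equations, $u_1\partial_x u_r$ in the third, and $-\mathbb{O}_1\partial_x B_r$, $\mathbb{O}_1\partial_x E_r$ in the $(E_r,B_r)$ block, while the $E_1$-equation contains no spatial derivative. Multiplying the $\rho$-equation by the smooth positive weight $P'(\rho+1)/(\rho+1)^2$ (the standard Euler symmetrizer) makes the $(\rho,u_1)$ principal block symmetric; the $u_r$-equation already has scalar principal coefficient $u_1\mathbb{I}_2$; and the $(E_r,B_r)$ block has principal matrix $\left(\begin{smallmatrix}0&-\mathbb{O}_1\\\mathbb{O}_1&0\end{smallmatrix}\right)$, which is symmetric because $\mathbb{O}_1^{T}=-\mathbb{O}_1$. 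Hence, after transferring the quasilinear flux terms ($u_1\partial_x u_1$, $u_1\partial_x u_r$) to the principal part, the system takes the form $A_0(W)\partial_t W+A_1(W)\partial_x W=F(W)$ with $A_0,A_1$ symmetric and smooth in $W$, and $F$ smooth; moreover $A_0(W)=\mathrm{diag}\big(P'(\rho+1)/(\rho+1)^2,\,1,\,\mathbb{I}_2,\,1,\,\mathbb{I}_4\big)$ is positive definite, uniformly for $W$ in a neighborhood of $0$, since there $\rho+1\ge 1/2$ and $P'(\rho+1)>0$ by the hypothesis on $P$.

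\textbf{Step 2 (Quasilinear existence and constraint propagation).} For $W(0)\in H^N(\mathbb{R})$ with $N>\frac32$ (the threshold $\frac{n}{2}+1$ in dimension $n=1$), the data lies in the region of uniform hyperbolicity after possibly shrinking $\delta$, so the classical existence theorem for quasilinear symmetric hyperbolic systems (\cite{Kato}; \cite{Taylor}, sections 1 and 2 of Chapter 16) furnishes $T>0$ and a unique solution $W\in C([0,T);H^N(\mathbb{R}))\cap C^1([0,T);H^{N-1}(\mathbb{R}))$ of the Cauchy problem for the evolution equations of \eqref{em-2}; continuity in time at the $H^N$ level (not merely weak continuity) follows from the standard energy identity for symmetric hyperbolic systems. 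It remains to recover $\partial_x E_1=-\rho$. Set $q:=\partial_x E_1+\rho$. The first equation of \eqref{em-2} is equivalent to $\partial_t\rho+\partial_x[(\rho+1)u_1]=0$, and the fourth equation reads $\partial_t E_1=(\rho+1)u_1$; differentiating the latter in $x$ and adding gives $\partial_t q=\partial_x[(\rho+1)u_1]+\partial_t\rho=0$. Since $q|_{t=0}=\partial_x E_{10}+\rho_0=0$ by \eqref{init-compatible}, we get $q\equiv 0$ on $[0,T)$, so $W$ solves \eqref{em-2}--\eqref{init} together with \eqref{init-compatible}, with uniqueness inherited from the symmetrized problem.

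\textbf{Main obstacle.} The only genuinely non-routine point is Step 1, namely exhibiting one symmetrizer $A_0(W)$ that works simultaneously for the hydrodynamic $(\rho,u_1,u_r)$ part and is compatible with the antisymmetric Maxwell coupling, and checking that it is uniformly positive near equilibrium; once this structural fact is in place, Step 2 and the constraint propagation are entirely standard. Note that the partially dissipative structure of the system---several components ($u_r$, $E_1$) carrying no spatial derivative in their own equations---is irrelevant for local existence and only enters the global analysis of Section~3.
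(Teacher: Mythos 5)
Your proposal is correct and follows exactly the route the paper intends: the paper proves this lemma simply by citing the quasilinear symmetric hyperbolic theory of Kato and Taylor, which is precisely your Step 2, and your symmetrizer $\mathrm{diag}\big(P'(\rho+1)/(\rho+1)^2,1,\mathbb{I}_2,1,\mathbb{I}_4\big)$ together with the observation $\mathbb{O}_1^{T}=-\mathbb{O}_1$ is the standard structure that makes those theorems applicable. The constraint propagation via $q=\partial_x E_1+\rho$, $\partial_t q=0$, is also the standard (implicit) argument, so no discrepancy with the paper.
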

 \begin{proof}[\textbf{Proof of Theorem \ref{global-tm}} ]
	The global existence of our solution follows in the standard way by combining the continuation argument base on a local in time existence given in Lemma \ref{local} and the a priori estimate given in Proposition \ref{a priori estimate}. And the global solution satisfies energy estimate \eqref{energy-estimate} for all $t\ge 0$.
\end{proof}

\subsection{Time Decay Rates of Nonlinear System}
\begin{thm} \label{nonlinear-decay}
Assume that $U(0)=(\rho_0,u_{10},u_{r0},E_{10},E_{r0},B_{r0})^{T}\in H^{N}(\mathbb{R})\cap L^1(\mathbb{R})$ with $N\ge 4$, and $\| U(0)\|_{H^N}\le \delta_0$ with $\delta_0 > 0$ being sufficiently small. Let $U=(\rho,u_1,u_r,E_1,E_r,B_r)^{T}$ be a solution of the Euler-Maxwell system \eqref{re-em_1}--\eqref{re-initial}. Then, it holds that for $\alpha=0,1$,
\bma
	\| \partial_x^\alpha(\rho,u_1)(t)\|_{L^2} &\le C\delta_0(1+t)^{-\frac54-\frac{\alpha}{2}}, \label{time_decay_1}\\
	\| \partial_x^\alpha E_1(t)\|_{L^2} &\le C\delta_0(1+t)^{-\frac54},\label{time_decay_2}\\
	\| \partial_x^\alpha (u_r,E_r)(t)\|_{L^2} &\le C\delta_0(1+t)^{-\frac34-\frac{\alpha}{2}}, \label{time_decay_3}\\
	\|\partial_x^\alpha B_r(t) \|_{L^2} &\le C\delta_0(1+t)^{-\frac14-\frac{\alpha}{2}}. \label{time_decay_4}
\ema
\end{thm}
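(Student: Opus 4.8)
The plan is to combine Duhamel's principle on the two decoupled linearized subsystems \eqref{fEM} and \eqref{eEM} with the linear decay estimates of Corollaries \ref{U1_linear_time} and \ref{Ur_linear_time} (equivalently the Green's-function bounds of Lemmas \ref{Gf-convolution} and \ref{Ge-convolution}) and the uniform-in-time energy bound \eqref{energy-estimate} of Theorem \ref{global-tm}, and then to close a time-weighted estimate by a continuity argument. First, writing $U_1=(u_1,E_1)^{T}$ and $U_r=(u_r,E_r,B_r)^{T}$, the nonlinear system \eqref{em-2} takes the form $\partial_tU_1=\mathcal{L}_1U_1+F$ and $\partial_tU_r=\mathcal{L}_rU_r+H$, where $\mathcal{L}_1,\mathcal{L}_r$ are the linear operators of \eqref{fEM} and \eqref{eEM} (with Fourier symbols $-\hat{A}_1$ and $-\hat{A}_r$), $F=\big(-u_1\partial_xu_1+u_r\cdot\mathbb{O}_1B_r-(\tfrac{P'(\rho+1)}{\rho+1}-\gamma)\partial_x\rho,\ \rho u_1\big)^{T}$, and $H=\big(-u_1\partial_xu_r-u_1\mathbb{O}_1B_r,\ \rho u_r,\ 0\big)^{T}$. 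By Propositions \ref{f-Green} and \ref{e-Green} this yields the mild formulation $U_1(t)=G_f(t)\ast U_1(0)+\int_0^tG_f(t-s)\ast F(s)\,ds$ and $U_r(t)=G_e(t)\ast U_r(0)+\int_0^tG_e(t-s)\ast H(s)\,ds$, while $\rho$ is recovered from the constraint $\rho=-\partial_xE_1$. The vanishing of the $B_r$-component of $H$ is precisely what will let $B_r$ retain the (slow) linear rate.

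Next I would introduce the bootstrap functional $\mathcal{M}(t)$, defined as the supremum over $0\le s\le t$ and $\alpha\in\{0,1\}$ of the quantities $(1+s)^{\frac54+\frac\alpha2}\|\partial_x^\alpha(\rho,u_1)\|_{L^2}$, $(1+s)^{\frac54}\|\partial_x^\alpha E_1\|_{L^2}$, $(1+s)^{\frac34+\frac\alpha2}\|\partial_x^\alpha(u_r,E_r)\|_{L^2}$ and $(1+s)^{\frac14+\frac\alpha2}\|\partial_x^\alpha B_r\|_{L^2}$, and establish the closed inequality $\mathcal{M}(t)\le C\delta_0+C\mathcal{M}(t)^2$. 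The linear contributions are immediate: $G_f(t)\ast U_1(0)$ and its first derivative decay exponentially by Corollary \ref{U1_linear_time}, far below any required polynomial rate, and Corollary \ref{Ur_linear_time} bounds $\partial_x^\alpha\big(G_e(t)\ast U_r(0)\big)$ by $C\delta_0(1+t)^{-\frac34-\frac\alpha2}$ for the $(u_r,E_r)$-components and $C\delta_0(1+t)^{-\frac14-\frac\alpha2}$ for $B_r$, the hypothesis $N\ge4$ being used exactly to absorb the $l$-dependent high-frequency remainders when $\alpha\le1$.

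For the nonlinear Duhamel terms, under the a priori smallness of $\mathcal{M}(t)$ together with $\|W(s)\|_{H^N}\le C\delta_0$ from \eqref{energy-estimate}, I would estimate the sources $F,H$ by the product and commutator inequalities of Lemma \ref{sobolev-ineq}, the one-dimensional inequality $\|f\|_{L^\infty}\le C\|f\|_{L^2}^{1/2}\|\partial_xf\|_{L^2}^{1/2}$, and Gagliardo--Nirenberg interpolation between the decaying low-order norms (measured by $\mathcal{M}$) and the uniformly bounded $H^N$-norm. The decisive bounds are $\|u_r\cdot\mathbb{O}_1B_r(s)\|_{L^2}\le C(1+s)^{-\frac54}\mathcal{M}(s)^2$ and $\|\partial_x(u_r\cdot\mathbb{O}_1B_r)(s)\|_{L^2}\le C(1+s)^{-\frac74}\mathcal{M}(s)^2$, which govern the $U_1$-Duhamel term; every other nonlinear term carries a factor $u_1$ or $\rho$ and hence decays strictly faster (in $L^1\cap L^2$ for the $U_r$-term). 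Inserting these into Lemmas \ref{Gf-convolution} and \ref{Ge-convolution}, and using the elementary bounds $\int_0^te^{-\frac{t-s}{2}}(1+s)^{-b}\,ds\le C(1+t)^{-b}$ and $\int_0^t(1+t-s)^{-a}(1+s)^{-b}\,ds\le C(1+t)^{-\min\{a,b\}}$ (for $a\ne b$, $\max\{a,b\}>1$), one recovers all the asserted rates; here the $|k|^{-1}$-type weight of $G_f^{21}$ (and the analogous structure of $G_f^{12}$) must be exploited, cf. Lemma \ref{Gf-convolution}, so that at most one derivative ever lands on $u_r\cdot\mathbb{O}_1B_r$. This gives $\mathcal{M}(t)\le C\delta_0+C\mathcal{M}(t)^2$; since $\mathcal{M}$ is continuous and $\mathcal{M}(0)\le C\delta_0$, taking $\delta_0$ small yields $\mathcal{M}(t)\le 2C\delta_0$ for all $t\ge0$, which is exactly \eqref{time_decay_1}--\eqref{time_decay_4}.

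The main obstacle is the $(\rho,u_1,E_1)$ block. Although $G_f$ decays exponentially, the quadratic coupling $u_r\cdot\mathbb{O}_1B_r$ decays only polynomially, at rate $(1+t)^{-\frac54}$ in $L^2$ (since $\|B_r\|_{L^2}\sim(1+t)^{-\frac14}$ and $\|u_r\|_{L^\infty}\sim(1+t)^{-1}$), and this precise rate must be carried through the Duhamel integral without loss even after a differentiation --- which is why one cannot afford to let derivatives fall on $B_r$, whose second derivative is merely bounded and not decaying, and must instead use the smoothing-type frequency weights built into $G_f^{21}$; handling $\partial_x\rho=-\partial_x^2E_1$ additionally requires the interpolation bound for $\|\partial_x^2(\rho u_1)\|_{L^2}$ referred to above. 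The accompanying regularity bookkeeping --- verifying that $N\ge4$ is precisely enough for $\alpha\le1$ through the remainder terms in Corollary \ref{Ur_linear_time} and Lemma \ref{Ge-convolution} --- is the other delicate point; everything else is the routine convolution-of-decay-rates accounting.
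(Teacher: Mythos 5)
Your proposal is correct and follows essentially the same route as the paper: Duhamel's formula with the Green's functions $G_f,G_e$, a time-weighted bootstrap functional closed via $\mathcal{M}(t)\le C\delta_0+C\mathcal{M}(t)^2$, Gagliardo--Nirenberg interpolation against the uniform $H^N$ energy bound to control the higher derivatives of the sources demanded by the regularity-loss terms in Lemma \ref{Ge-convolution}, and exploitation of the derivative gain in $G_f^{21}$ so that only one derivative falls on $u_r\cdot\mathbb{O}_1B_r$. The only cosmetic difference is that the paper carries $\|\partial_x^2(\rho,u_1,u_r,B_r)\|_{H^2}$ inside its functional $Q(t)$ before bounding it by the energy estimate, whereas you invoke the $H^N$ bound directly, which amounts to the same argument.
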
			
			
\begin{proof}
Let $U_1=(u_1,E_1)^T$ and $U_r=(u_r,E_r,B_r)^T$. Then, by the Duhamel's principle, one has
\bma
	U_1(t,x) &= G_f(t) \ast U_1(0) + \int_0^t G_f(t-s) \ast H_1(s) ds, \label{duhamel_1}\\
	U_r(t,x) &= G_e(t) \ast U_r(0) + \int_0^t G_e(t-s) \ast H_r(s) ds. \label{duhamel_r}
\ema
Here, $H_1=(h_1,h_2)^{T},~H_r=(h_3,h_4,0)^{T}$ with
\bmas
	h_1 &=- u_1\partial_x u_1 -  \left( \frac{P'(\rho+1)}{\rho+1} - P'(1) \right)\partial_x \rho +  u_r\cdot\mathbb{O}_1B_r,\\
	h_2 &= \rho u_1,\\
	h_3 &=- u_1\partial_x u_r -  u_1\mathbb{O}_1B_r,\\
	h_4 &=  \rho u_r.
\emas
Now, we defined a functional $Q(t)$ for any $t>0$ as
\bma \label{ansatz}
\begin{split}
	Q(t) = &\sup_{0 \le s \le t}
	\bigg\{ \sum_{ \alpha=0,1}\((1+s)^{\frac54+\frac{\alpha}2}\| \partial_x^\alpha(\rho,u_1) (s)\|_{L^2} + (1+s)^{\frac54}\| \partial_x^\alpha E_1(s)\|_{L^2}\)  \\
	& \qquad \quad + \sum_{ \alpha=0,1}\((1+s)^{\frac34+\frac{\alpha}{2}}\| \partial_x^\alpha (u_r,E_r )(s)\|_{L^2} + (1+s)^{\frac14+\frac{\alpha}{2}}\| \partial_x^\alpha B_r(s)\|_{L^2}\) \\
	& \qquad \quad + \| \partial_x^2(\rho,u_1,u_r,B_r)(s)\|_{H^2}\bigg\} .
\end{split}
\ema
In what follows, we should prove that
\be \label{claim} Q(t) \le C\delta_0.\ee
Once it holds, we will obtain the time decay rates \eqref{time_decay_1}--\eqref{time_decay_4}. Moreover, by applying the  Gagliardo-Nirenberg interpolation inequality,  we have
\bmas
\| \partial_x^2 \rho\|_{L^2} &\le C\| \partial_x^4\rho\|_{L^2}^{\frac13} \| \partial_x\rho\|_{L^2}^{\frac23}\le C(1+t)^{-\frac76}Q(t),\\
\| \partial_x^3 \rho\|_{L^2} &\le C\| \partial_x^4\rho\|_{L^2}^{\frac23} \| \partial_x\rho\|_{L^2}^{\frac13}\le C(1+t)^{-\frac{7}{12}}Q(t).
\emas
Similarly, we also have
\bmas
&\| \partial_x^2 u_1\|_{L^2} \le C(1+t)^{-\frac76}Q(t),\quad \| \partial_x^3 u_1\|_{L^2} \le C(1+t)^{-\frac{7}{12}}Q(t),\\
& \| \partial_x^2 u_r\|_{L^2} \le C(1+t)^{-\frac56}Q(t),\quad \| \partial_x^3 u_r\|_{L^2}  \le (1+t)^{-\frac{5}{12}}Q(t) ,\\
&\| \partial_x^2 B_r\|_{L^2} \le C(1+t)^{-\frac12}Q(t),\quad \| \partial_x^3 B_r\|_{L^2}  \le C(1+t)^{-\frac14}Q(t) .
\emas
The Gagliardo-Nirenberg interpolation inequality is stated as: Let $1\le q,r\le \infty$, for $0\le j < m$, it holds
$$
\| \partial^j u\|_{L^p} \le C\| \partial^m u\|_{L^r}^a\| u\|_{L^q}^{1-a},	\quad u\in L^q(\mathbb{R}^n),
$$
where
$$
\frac{1}{p} = \frac{j}{n} +a\(\frac{1}{r}-\frac{m}{n}\)+(1-a)\frac{1}{q}, \qquad \frac{j}{m} \le a \le 1.
$$
Firstly, we give some estimates for the nonlinear terms $h_1$ and $h_2$ for $0\le s \le t$ in terms of $Q(t)$ as
\bmas
	\| h_1(t) \|_{L^2} &\le C( \| u_1\|_{L^\infty}\|\partial_x  u_1\|_{L^2} + \|  \rho\|_{L^\infty} \|\partial_x \rho\|_{L^2}+\| B_r\|_{L^\infty}\| u_r\|_{L^2})\\
		&\le C(1+t)^{-\frac54}Q^2(t),\\
	\| \partial_x h_1 \|_{L^2} &\le  C( \| u_1\|_{L^\infty}\|\partial_x^2  u_1\|_{L^2} +\| \partial_x u_1\|_{L^\infty}\| \partial_x u_1\|_{L^2} + \|  \rho\|_{L^\infty} \|\partial_x^2 \rho\|_{L^2} \\
	&\quad +\| \partial_x\rho\|_{L^\infty}\| \partial_x \rho\|_{L^2}+\| u_r\|_{L^\infty}\| \partial_x B_r\|_{L^2} +\| B_r\|_{L^\infty}\| \partial_x u_r\|_{L^2})\\
& \le C(1+t)^{-\frac74}Q^2(t),\\
	\| h_2(t)\|_{L^2} &\le C\| \rho \|_{L^\infty}\| u_1\|_{L^2} \le C(1+t)^{-\frac{11}4}Q^2(t),\\
	\| \partial_x h_2(t) \|_{L^2} &\le ( \|  u_1\|_{L^\infty}\| \partial_x\rho\|_{L^2} +\|  \rho\|_{L^\infty}\| \partial_x u_1 \|_{L^2})
	\le C(1+t)^{-\frac{13}4}Q^2(t),\\
	\| \partial_x^2 h_2(t) \|_{L^2} &\le ( \|  u_1\|_{L^\infty}\| \partial_x^2\rho\|_{L^2} +\| \rho \|_{L^\infty}\| \partial_x^2 u_1 \|_{L^2}) \le C(1+t)^{-\frac{8}{3}}Q^2(t),
\emas
where we use the following inequality
\bmas
	&\| u_1\|_{L^\infty} \le \| u_1\|_{L^2}^{\frac{1}{2}}\| \partial_x u_1\|_{L^2}^{\frac{1}{2}} \le (1+t)^{-\frac32}Q(t), \\
	&\| B_r\|_{L^\infty} \le \| B_r\|_{L^2}^{\frac{1}{2}}\| \partial_x B_r\|_{L^2}^{\frac{1}{2}} \le (1+t)^{-\frac{1}{2}}Q(t), \\
	&\| u_r\|_{L^\infty} \le \| u_r\|_{L^2}^{\frac{1}{2}}\| \partial_x u_r\|_{L^2}^{\frac{1}{2}} \le (1+t)^{-1}Q(t).
\emas

According to \eqref{duhamel_1}, we obtain by Lemma \ref{Gf-convolution} that
\bmas
	\| u_1\|_{L^2} &\le\| G_f^{11}\ast u_{10}\|_{L^2} + \| G_f^{12}\ast E_{10}\|_{L^2} \\
	&\quad +\int_0^t \| G_f^{11}(t-s)\ast h_1(s)\|_{L^2} + \| G_f^{12}(t-s)\ast h_2(s)\|_{L^2} ds\\
	&\le Ce^{-\frac{t}{2}}(\| u_{10}\|_{L^2} +\| E_{10}\|_{L^2} +\| \partial_xE_{10}\|_{L^2}) \\
	&\quad +C\int_0^te^{-\frac{t-s}{2}}(\| h_1(s) \|_{L^2} + \| h_2(s) \|_{L^2} + \| \partial_x h_2(s) \|_{L^2}) ds \\
	&\le Ce^{-\frac{t}{2}}\delta_0 + C\int_0^t e^{-\frac{t-s}{2}}(1+s)^{-\frac54}Q^2(t)ds \\
	&\le Ce^{-\frac{t}{2}}\delta_0 + C(1+t)^{-\frac54}Q^2(t),\\
	\| \partial_x u_1\|_{L^2} &\le\| \partial_x (G_f^{11}\ast u_{10})\|_{L^2} + \| \partial_x (G_f^{12}\ast E_{10})\|_{L^2} \\
	&\quad +\int_0^t \| \partial_x (G_f^{11}(t-s)\ast h_1(s))\|_{L^2} + \| \partial_x (G_f^{12}(t-s)\ast h_2(s))\|_{L^2} ds\\
	&\le Ce^{-\frac{t}{2}}(\| (\partial_x u_{10},\partial_x E_{10})\|_{L^2} +\| \partial_x^2E_{10}\|_{L^2}) \\
	&\quad +C\int_0^te^{-\frac{t-s}{2}}( \| \partial_x h_1(s) \|_{L^2} + \| \partial_x h_2(s) \|_{L^2} + \| \partial_x^2 h_2(s) \|_{L^2}) ds \\
	&\le Ce^{-\frac{t}{2}}\delta_0 + C\int_0^t e^{-\frac{t-s}{2}}(1+s)^{-\frac74}Q^2(t)ds \\
	&\le Ce^{-\frac{t}{2}}\delta_0 + C(1+t)^{-\frac74}Q^2(t).
\emas
For $E_1$ and $\rho=-\dx E_1,$ we have
\bmas
	\| E_1\|_{L^2} &\le \| G_f^{21}\ast u_{10}\|_{L^2} + \| G_f^{22}\ast E_{10}\|_{L^2}\\
	&\quad+\int_0^t \| G_f^{21}(t-s)\ast h_1(s)\|_{L^2} + \| G_f^{22}(t-s)\ast h_2(s)\|_{L^2}ds\\
	&\le Ce^{-\frac{t}{2}}(\| u_{10}\|_{L^2} + \| E_{10}\|_{L^2}) + C\int_0^t e^{-\frac{t-s}{2}}(\| h_1(s)\|_{L^2} +\| h_2(s)\|_{L^2})ds\\
	&\le Ce^{-\frac{t}{2}}\delta_0 + C(1+t)^{-\frac54}Q^2(t),\\
	\| \partial_x E_1 \|_{L^2}& \le \| \partial_x (G_f^{21}\ast u_{10})\|_{L^2} + \| \partial_x(G_f^{22}\ast E_{10})\|_{L^2}\\
	&\quad +\int_0^t \| \partial_x (G_f^{21}(t-s)\ast h_1(s))\|_{L^2} + \| \partial_x(G_f^{22}(t-s)\ast h_2(s))\|_{L^2}ds\\
	&\le Ce^{-\frac{t}{2}}(\| u_{10} \|_{L^2} +\| \partial_x E_{10}\|_{L^2}) \\
	&\quad+ C\int_0^t e^{-\frac{t-s}{2}}(\| h_1(s)\|_{L^2} + \| \partial_x h_2(s)\|_{L^2}) ds\\
	&\le Ce^{-\frac{t}{2}}\delta_0 + C(1+t)^{-\frac54}Q^2(t),\\
	\| \partial_x \rho \|_{L^2} &\le \| \partial_x^2 (G_f^{21}\ast u_{10})\|_{L^2} + \| \partial_x^2(G_f^{22}\ast E_{10})\|_{L^2}\\
	&\quad +\int_0^t \| \partial_x^2 (G_f^{21}(t-s)\ast h_1(s))\|_{L^2} + \| \partial_x^2(G_f^{22}(t-s)\ast h_2(s))\|_{L^2}ds\\
	&\le Ce^{-\frac{t}{2}}(\| \partial_x u_{10} \|_{L^2}+\| \partial_x^2 E_{10}\|_{L^2}) \\
	&\quad+ C\int_0^t e^{-\frac{t-s}{2}}( \| \partial_x h_1(s) \|_{L^2} +\| \partial_x^2 h_2(s)\|_{L^2}) ds\\
	&\le Ce^{-\frac{t}{2}}\delta_0 + C(1+t)^{-\frac74}Q^2(t).
\emas
Next, we provide some estimates of nonlinear term $h_3$ and $h_4$ as following
\bmas
	\| h_3(t)\|_{L^1}
	&\le C( \| u_1\|_{L^2}\| \partial_x u_r\|_{L^2}+\| u_1\|_{L^2}\| B_r\|_{L^2}) \le C(1+t)^{-\frac32}Q^2(t),\\
	\| h_3(t)\|_{L^2}
	&\le C( \| u_1\|_{L^\infty}\|  \partial_x u_r\|_{L^2}+\| B_r\|_{L^\infty}\| u_1\|_{L^2}) \le C(1+t)^{-\frac74}Q^2(t),\\
	\| \partial_x^{\alpha} h_3(s) \|_{L^2} &\le C(  \| u_1\|_{L^\infty}\| \partial_x^{\alpha+1} u_r\|_{L^2}   + \| \partial_x u_r\|_{L^\infty}\| \partial_x^{\alpha} u_1\|_{L^2})\\
	&\quad +C( \| u_1\|_{L^\infty}\| \partial_x^{\alpha} B_r\|_{L^2} + \| B_r\|_{L^\infty}\| \partial_x^{\alpha} u_1\|_{L^2})\\
&\le C (1+t)^{-\frac{17}6+ \frac{ 7\alpha}{12}}Q^2(t),\quad \alpha=1,2,3,
\\
	\| h_4(t)\|_{L^1} & \le C\| \rho \|_{L^2}\| u_r\|_{L^2} \le C(1+t)^{-2}Q^2(t),\\
	\| h_4(t)\|_{L^2} &\le C \|\rho \|_{L^2} \| u_r \|_{L^\infty}\le C(1+t)^{-\frac94}Q^2(t),\\
	\| \partial_x^\alpha h_4(t)\|_{L^2} &\le C( \| \rho \|_{L^\infty}\| \partial_x^\alpha u_r\|_{L^2} +\| u_r \|_{L^\infty}\| \partial_x^\alpha \rho \|_{L^2} )\\
	&\le C(1+t)^{-\frac{10}3+ \frac{ 7\alpha}{12}}Q^2(t),\quad \alpha=1,2,3,4.
\emas
According to \eqref{duhamel_r} and the corollary \ref{Ur_linear_time}, we have
\bmas
	\| u_r\|_{L^2}  &\le \| G_e^{11}\ast u_{r0}\|_{L^2} +\| G_e^{12}\ast E_{r0}\|_{L^2} +\| G_e^{13}\ast B_{r0}\|_{L^2}\\
	&\quad +\int_0^t \| G_e^{11}(t-s)\ast h_3(s)\|_{L^2} + \| G_e^{12}(t-s)\ast h_4(s)\|_{L^2}ds\\
	&\le C(1+t)^{-\frac34}( \| (u_{r0},E_{r0},B_{r0})\|_{L^1\cap L^2}+\| \partial_x(u_{r0},E_{r0},B_{r0})\|_{L^2})\\
	&\quad + C\int_0^t(1+t-s)^{-\frac54}\| h_3(s)\|_{L^1\cap L^2} +  (1+t-s)^{-1}\| h_3(s)\|_{L^2} ds\\
	&\quad +C\int_0^t(1+t-s)^{-\frac54}\| h_4(s)\|_{L^1\cap L^2} + (1+t-s)^{-1}\| \partial_x h_4(s)\|_{L^2}ds\\
	&\le C(1+t)^{-\frac34}\delta_0\\
	&\quad + CQ^2(t)\int_0^t(1+t-s)^{-\frac54}(1+s)^{-\frac32} + (1+t-s)^{-1}(1+s)^{-\frac74} ds\\
	&\quad + CQ^2(t)\int_0^t(1+t-s)^{-\frac54}(1+s)^{-2} + (1+t-s)^{-1}(1+s)^{-\frac{11}4} ds\\
	&\le C(1+t)^{-\frac34}\delta_0 + C(1+t)^{-1}Q^2(t),\\
	\| \partial_x u_r\|_{L^2}  &\le \| \partial_x(G_e^{11}\ast u_{r0})\|_{L^2} +\| \partial_x (G_e^{12}\ast E_{r0})\|_{L^2} +\| \partial_x (G_e^{13}\ast B_{r0})\|_{L^2}\\
	&\quad +\int_0^t \| \partial_x (G_e^{11}(t-s)\ast h_3(s))\|_{L^2} + \| \partial_x (G_e^{12}(t-s)\ast h_4(s))\|_{L^2}ds\\
	&\le C(1+t)^{-\frac54}( \| (u_{r0},E_{r0},B_{r0})\|_{L^1\cap L^2}+\| \partial_x^3(u_{r0},E_{r0},B_{r0})\|_{L^2})\\
	&\quad  + C\int_0^t(1+t-s)^{-\frac74}\| h_3(s)\|_{L^1\cap L^2}+(1+t-s)^{-\frac32}\| \partial_x^2 h_3(s)\|_{L^2}ds \\
	&\quad  +C\int_0^t(1+t-s)^{-\frac74}\|h_4(s)\|_{L^1\cap L^2}+(1+t-s)^{-\frac32}\| \partial_x^3 h_4(s)\|_{L^2}ds\\
	&\le C(1+t)^{-\frac54}\delta_0\\
	&\quad + CQ^2(t)\int_0^t(1+t-s)^{-\frac74}(1+s)^{-\frac32}ds +(1+t-s)^{-\frac32}(1+s)^{-\frac{5}3} ds\\
	&\quad + CQ^2(t)\int_0^t (1+t-s)^{-\frac74}(1+s)^{-2}+(1+t-s)^{-\frac32}(1+s)^{-\frac{19}{12}} ds\\
	&\le C(1+t)^{-\frac54}\delta_0 + C(1+t)^{-\frac32} Q^2(t).
\emas
For $E_r(t)$, we have
\bmas
	\| E_r\|_{L^2} &\le \|  G_e^{21}\ast  u_{r0}\|_{L^2} + \|  G_e^{22}\ast  E_{r0}\|_{L^2} + \|  G_e^{23}\ast  B_{r0}\|_{L^2} \\
	&\quad +\int_0^t \|  G_e^{21}(t-s)\ast  h_3(s)\|_{L^2} + \| G_e^{22}(t-s)\ast h_4(s)\|_{L^2}ds\\
	&\le C(1+t)^{-\frac34}( \| (u_{r0},E_{r0},B_{r0})\|_{L^1\cap L^2} +\| \partial_x^2(u_{r0},E_{r0},B_{r0})\|_{L^2}) \\
	&\quad +C\int_0^t(1+t-s)^{-\frac54}\| h_3(s)\|_{L^1\cap L^2} + (1+t-s)^{-1}\| \partial_x h_3(s)\|_{L^2} ds\\
	&\quad + C\int_0^t(1+t-s)^{-\frac54}\| h_4(s)\|_{L^1\cap L^2} + (1+t-s)^{-1}\| \partial_x^2 h_4(s)\|_{L^2} ds \\
	&\le C(1+t)^{-\frac34}\delta_0 \\
	&\quad + CQ^2(t)\int_0^t(1+t-s)^{-\frac54}(1+s)^{-\frac32} + (1+t-s)^{-1}(1+s)^{-\frac94} ds\\
	&\quad + CQ^2(t)\int_0^t(1+t-s)^{-\frac54}(1+s)^{-2} + (1+t-s)^{-1}(1+s)^{ -\frac{13}6} ds\\
	&\le C(1+t)^{-\frac34}\delta_0 + C(1+t)^{-1}Q^2(t),\\
	\| \partial_x E_r\|_{L^2}  &\le \| \partial_x(G_e^{21}\ast u_{r0})\|_{L^2} +\| \partial_x (G_e^{22}\ast E_{r0})\|_{L^2} +\| \partial_x (G_e^{23}\ast B_{r0})\|_{L^2}\\
	&\quad +\int_0^t \| \partial_x (G_e^{21}(t-s)\ast h_3(s))\|_{L^2} + \| \partial_x (G_e^{22}(t-s)\ast h_4(s))\|_{L^2}ds\\
	&\le C(1+t)^{-\frac54}( \| (u_{r0},E_{r0},B_{r0})\|_{L^1\cap L^2}+ \| \partial_x^4(u_{r0},E_{r0},B_{r0})\|_{L^2})\\
	&\quad +C\int_0^t(1+t-s)^{-\frac74}\| h_3(s)\|_{L^1\cap L^2} ds + C\int_0^{t/2}(1+t-s)^{-\frac32} \| \partial_x^3h_3(s)\|_{L^2}ds\\
	&\quad  + C\int_{t/2}^t(1+t-s)^{-1}\| \partial_x^2 h_3(s)\|_{L^2} ds\\
	&\quad + C\int_0^t(1+t-s)^{-\frac74}\| h_4(s)\|_{L^1\cap L^2}ds +C\int_0^{t/2} (1+t-s)^{-\frac32}\| \partial_x^4 h_4(s)\|_{L^2} ds \\
	&\quad + C\int_{t/2}^t (1+t-s)^{-\frac12}\| \partial_x^2 h_4(s)\|_{L^2}ds\\
	&\le C(1+t)^{-\frac54}\delta_0 \\
	&\quad + C\int_0^t(1+t-s)^{-\frac74}(1+s)^{-\frac74}Q^2(t)ds +C\int_0^{t/2}(1+t-s)^{-\frac32}(1+s)^{-\frac{13}{12}}Q^2(t)ds\\
	&\quad  +C\int_{t/2}^t(1+t-s)^{-1}(1+s)^{-\frac{5}3}Q^2(t) ds\\
	&\quad + C\int_0^t(1+t-s)^{-\frac74}(1+s)^{-2}Q^2(t) ds + C\int_0^{t/2}(1+t-s)^{-\frac32}(1+s)^{-1}Q^2(t) ds\\
	&\quad +C\int_{t/2}^t (1+t-s)^{-\frac12} (1+s)^{ -\frac{13}{6}}Q^2(t)ds\\
	& \le C(1+t)^{-\frac54}\delta_0 + C(1+t)^{-\frac32}\ln{(2+t)} Q^2(t).
\emas

For $B_r(t)$, we have
\bmas
	\| B_r\|_{L^2} &\le \|  G_e^{31}\ast u_{r0}\|_{L^2} +\|  G_e^{32}\ast  E_{r0}\|_{L^2}+\|  G_e^{33}\ast  B_{r0}\|_{L^2}\\
	&\quad +\int_0^t \|  G_e^{31}(t-s)\ast  h_3(s)\|_{L^2} + \| G_e^{32}(t-s)\ast h_4(s)\|_{L^2}ds\\
	&\le C(1+t)^{-\frac14}( \| (u_{r0},E_{r0},B_{r0})\|_{L^1\cap L^2} +\| \partial_x(u_{r0},E_{r0},B_{r0})\|_{L^2})\\
	&\quad + C\int_0^t(1+t-s)^{-\frac34}\| h_3(s)\|_{L^1\cap L^2} + (1+t-s)^{-\frac12}\| h_3(s)\|_{L^2} ds\\
	&\quad + C\int_0^t(1+t-s)^{-\frac34}\| h_4(s)\|_{L^1\cap L^2}+ (1+t-s)^{-\frac12}\| \partial_x h_4(s)\|_{L^2} ds \\
	&\le C(1+t)^{-\frac14}\delta_0\\
	&\quad + CQ^2(t) \int_0^t(1+t-s)^{-\frac34}(1+s)^{-\frac32} + (1+t-s)^{-\frac12}(1+s)^{-\frac74} ds\\
	&\quad + CQ^2(t) \int_0^t(1+t-s)^{-\frac34}(1+s)^{-2} + (1+t-s)^{-\frac12}(1+s)^{-\frac{11}4} ds\\
	&\le C(1+t)^{-\frac14}\delta_0 + C(1+t)^{-\frac12}Q^2(t),\\
	\| \partial_x B_r\|_{L^2} &\le \| \partial_x (G_e^{31}\ast u_{r0})\|_{L^2} +\|  \partial_x (G_e^{32}\ast  E_{r0})\|_{L^2}+\|  \partial_x (G_e^{33}\ast B_{r0})\|_{L^2}\\
	&\quad +\int_0^t \|  \partial_x (G_e^{31}(t-s)\ast  h_3(s))\|_{L^2} + \| \partial_x( {G_e^{32}(t-s)\ast h_4(s)})\|_{L^2}ds\\
	&\le C(1+t)^{-\frac34}( \| (u_{r0},E_{r0},B_{r0})\|_{L^1\cap L^2} +\| \partial_x^3(u_{r0},E_{r0},B_{r0})\|_{L^2})\\
	&\quad + C\int_0^t (1+t-s)^{-\frac54}\| h_3(s)\|_{L^1\cap L^2} +(1+t-s)^{-1}\| \partial_x^2 h_3(s)\|_{L^2}ds\\
	&\quad + C\int_0^t (1+t-s)^{-\frac54}\| h_4(s)\|_{L^1\cap L^2} +(1+t-s)^{-1}\| \partial_x^3 h_4(s)\|_{L^2}ds \\
	&\le C(1+t)^{-\frac34}\delta_0\\
&\quad +CQ^2(t) \int_0^t (1+t-s)^{-\frac54}(1+s)^{-\frac32}+ (1+t-s)^{-1}(1+s)^{-\frac{5}3} ds\\
	&\quad + CQ^2(t) \int_0^t (1+t-s)^{-\frac54}(1+s)^{-2} + (1+t-s)^{-1}(1+s)^{-\frac{19}{12}} ds\\
	&\le  C(1+t)^{-\frac34}\delta_0 +C(1+t)^{-1}Q^2(t).
\emas
And from the energy estimate, we obtain
$$
	\| \partial_x^2(\rho,u_1,u_r,B_r)\|_{H^2}  \le C \| U(0)\|_{H^4} \le C\delta_0.
$$
Therefore, we conclude that
$$
	Q(t) \le C\delta_0 + CQ^2(t),
$$
from which the claim \eqref{claim} holds for $\delta_0$ being small enough and hence we complete the proof.
\end{proof}

\begin{thm}
Assume that $U_r(0)=(u_{r0},E_{r0},B_{r0})^{T}\in H^{N}\cap L^1$ with $N\ge 4$, and $\| U_r(0)\|_{H^N}\le \delta_0$ with $\delta_0 > 0$ being sufficiently small. There exists a constant $d_0>0$ such that $\inf\limits_{|k|\le\epsilon} |\hat{B}_{r0}(k)|\ge d_0$, then the solution $U_r=(u_r,E_r,B_r)^{T}$ of Euler-Maxwell system \eqref{re-em_1}--\eqref{re-initial} satisfies for $\alpha=0,1$,
\bgr
	C_1d_0(1+t)^{-\frac{3}{4}-\frac{\alpha}{2}} \le \| \partial_x^\alpha(u_r,E_r)(t)\|_{L^2} \le C_2\delta_0(1+t)^{-\frac{3}{4}-\frac{\alpha}{2}},\\
	C_1d_0(1+t)^{-\frac{1}{4}-\frac{\alpha}{2}} \le \| \partial_x^\alpha B_r(t)\|_{L^2} \le C_2\delta_0(1+t)^{-\frac{1}{4}-\frac{\alpha}{2}},
\egr
where $C_2\ge C_1>0$ are two constants and $t>0$ is sufficiently large.
\end{thm}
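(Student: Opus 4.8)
The plan is to combine the upper bounds already established with a lower bound extracted from the linear part of Duhamel's formula. The two upper bounds are exactly \eqref{time_decay_3}--\eqref{time_decay_4} of Theorem~\ref{nonlinear-decay}, so only the lower bounds require work. Using \eqref{duhamel_r}, write $U_r=U_r^{L}+U_r^{N}$ with $U_r^{L}(t)=G_e(t)\ast U_r(0)$ and $U_r^{N}(t)=\int_0^t G_e(t-s)\ast H_r(s)\,ds$, so that for each component and each $\alpha=0,1$ one has $\|\partial_x^\alpha U_r(t)\|_{L^2}\ge\|\partial_x^\alpha U_r^{L}(t)\|_{L^2}-\|\partial_x^\alpha U_r^{N}(t)\|_{L^2}$.

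For the linear part I would invoke the lower-bound half of Corollary~\ref{Ur_linear_time} (itself built on Lemma~\ref{Ge-lowerbound}); under $N\ge4$ its hypotheses $u_{r0}\in H^{2\alpha+1}\cap L^1$, $E_{r0}\in H^{2\alpha+2}\cap L^1$ and $\inf_{|k|\le\epsilon}|\hat B_{r0}(k)|\ge d_0$ are met for $\alpha=0,1$. The mechanism is that, for each component, the convolution of $B_{r0}$ with the appropriate block of $G_e$ ($G_e^{13}$ for $u_r$, $G_e^{23}$ for $E_r$, $G_e^{33}$ for $B_r$) carries the slowest-decaying contribution, with the $d_0$-dependent lower bound coming from $\int_{|k|\le\epsilon}e^{-3k^2t}k^{4+2\alpha}|\hat B_{r0}|^2\,dk\ge c\,d_0^2(1+t)^{-5/2-\alpha}$ and its analogues established in the proof of Lemma~\ref{Ge-lowerbound}, whereas the convolutions of $u_{r0}$ and $E_{r0}$ with the remaining blocks are of strictly smaller order by Lemma~\ref{Ge-convolution}. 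The triangle inequality then yields $\|\partial_x^\alpha(u_r^{L},E_r^{L})(t)\|_{L^2}\ge C_1 d_0(1+t)^{-3/4-\alpha/2}$ and $\|\partial_x^\alpha B_r^{L}(t)\|_{L^2}\ge C_1 d_0(1+t)^{-1/4-\alpha/2}$ for $t$ large (using also that $d_0\le|\hat B_{r0}(0)|\le C\delta_0$, so the subtracted terms are genuinely of lower order).

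For the nonlinear part I would reuse, verbatim, the bounds on $\|\partial_x^j h_3\|_{L^1\cap L^2}$ and $\|\partial_x^j h_4\|_{L^1\cap L^2}$ in terms of $Q(t)$ derived inside the proof of Theorem~\ref{nonlinear-decay}; feeding them into Lemma~\ref{Ge-convolution} exactly as done there shows the nonlinear Duhamel integrals satisfy $\|\partial_x^\alpha U_r^{N}(t)\|_{L^2}\le C(1+t)^{-1-\alpha/2}\ln(2+t)\,Q(t)^2$ for the $(u_r,E_r)$ components and $\|\partial_x^\alpha B_r^{N}(t)\|_{L^2}\le C(1+t)^{-1/2-\alpha/2}\,Q(t)^2$, the logarithm being needed only for $\partial_x E_r$. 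Since Theorem~\ref{nonlinear-decay} already furnishes $Q(t)\le C\delta_0$, the nonlinear contributions decay at least a factor $(1+t)^{-1/4}/\ln(2+t)$ faster than the linear lower bounds, so for $t\ge t_0(d_0,\delta_0)$,
$$\|\partial_x^\alpha(u_r,E_r)(t)\|_{L^2}\ge C_1 d_0(1+t)^{-3/4-\alpha/2}-C\delta_0^2(1+t)^{-1-\alpha/2}\ln(2+t)\ge \tfrac12 C_1 d_0(1+t)^{-3/4-\alpha/2},$$
and similarly $\|\partial_x^\alpha B_r(t)\|_{L^2}\ge\tfrac12 C_1 d_0(1+t)^{-1/4-\alpha/2}$; after renaming the constant this is the asserted lower bound.

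The step I expect to be the main obstacle is the bookkeeping rather than any single hard estimate — essentially everything needed has been assembled in the previous lemmas. Concretely, for each of the six quantities $\partial_x^\alpha u_r,\partial_x^\alpha E_r,\partial_x^\alpha B_r$ ($\alpha=0,1$) one must check that the ``good'' piece $G_e^{i3}\ast B_{r0}$ carrying the spectral lower bound is never swamped by the other linear convolutions (which requires their strictly faster $(1+t)^{-5/4-\alpha/2}$, resp.\ $(1+t)^{-3/4-\alpha/2}$, decay), and that the nonlinear Duhamel term is genuinely of lower order — the tightest case being $\partial_x E_r$, whose nonlinear part only decays like $(1+t)^{-3/2}\ln(2+t)$ against the linear rate $(1+t)^{-5/4}$. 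One should also confirm that the regularity demanded by Lemma~\ref{Ge-lowerbound} and Lemma~\ref{Ge-convolution} stays within the budget $N\ge4$ for both $\alpha=0$ and $\alpha=1$, which it does since the binding requirement is $E_{r0}\in H^{2\alpha+2}\cap L^1\subseteq H^4\cap L^1$.
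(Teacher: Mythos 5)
Your proposal is correct and takes essentially the same route as the paper: the Duhamel decomposition, the $d_0$-lower bound from $\partial_x^\alpha(G_e^{i3}\ast B_{r0})$ via Lemma \ref{Ge-lowerbound}, upper bounds for the remaining linear convolutions via Lemma \ref{Ge-convolution} (including the $H^4$-budget estimate $\|\partial_x(G_e^{22}\ast E_{r0})\|_{L^2}\le C\delta_0(1+t)^{-3/2}$ for $\alpha=1$), and reuse of the $h_3,h_4$ bounds from the proof of Theorem \ref{nonlinear-decay}. The only (harmless) difference is that you retain the sharper $(1+t)^{-1-\alpha/2}\ln(2+t)$, resp.\ $(1+t)^{-1/2-\alpha/2}$, decay of the nonlinear Duhamel term so that it is absorbed for large $t$ alone, whereas the paper bounds it by $C\delta_0^2$ times the same rate as the linear lower bound and absorbs it using the smallness of $\delta_0$.
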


\begin{proof}
By Lemma \ref{Ge-lowerbound} and Theorem \ref{nonlinear-decay}, we only need to show the lower bounds of the time decay rates. For $\alpha =0,1$, from \eqref{duhamel_r}, one has
\bmas
	\| \partial_x^\alpha u_r\|_{L^2} &\ge \| \partial_x^\alpha(G_e^{13} \ast B_{r0}) \|_{L^2} - \| \partial_x^\alpha(G_e^{11} \ast u_{r0}) \|_{L^2} - \| \partial_x^\alpha(G_e^{12} \ast E_{r0}) \|_{L^2}  \\
	&\quad - \int_0^t\| \partial_x^\alpha[G_e^{11}(t-s) \ast h_3(s) + G_e^{12}(t-s) \ast h_4(s)] \|_{L^2} ds \\
	&\ge C_1d_0(1+t)^{-\frac{3}{4}-\frac{\alpha}{2}} - C_2\delta_0(1+t)^{-\frac{5}{4}-\frac{\alpha}{2}} -C_2\delta_0^2(1+t)^{-\frac34-\frac{\alpha}{2}}\\
	&\ge  C_3d_0(1+t)^{-\frac34-\frac{\alpha}{2}},\\
	\| \partial_x^\alpha E_r\|_{L^2} &\ge \| \partial_x^\alpha(G_e^{23} \ast B_{r0}) \|_{L^2} - \| \partial_x^\alpha(G_e^{21} \ast u_{r0}) \|_{L^2} - \| \partial_x^\alpha(G_e^{22} \ast E_{r0}) \|_{L^2} \\
	&\quad- \int_0^t\| \partial_x^\alpha[G_e^{21}(t-s) \ast h_3(s) + G_e^{22}(t-s) \ast h_4(s)] \|_{L^2} ds \\
	&\ge C_1d_0(1+t)^{-\frac{3}{4}-\frac{\alpha}{2}} - C_2\delta_0(1+t)^{-\frac{5}{4}-\frac{\alpha}{2}} -  C_2\delta_0(1+t)^{-\frac54-\frac{\alpha}{4}} -C_2\delta_0^2(1+t)^{-\frac34-\frac{\alpha}{2}}\\
	&\ge  C_3d_0(1+t)^{-\frac34-\frac{\alpha}{2}}.
\emas
Here, from \eqref{Ge-22} in Lemma \ref{Ge-convolution}, for $\alpha=1$, we use the following estimate
\bmas
	\|\partial_x(G_e^{22}\ast E_{r0})\|_{L^2} \le C(1+t)^{-\frac74}\| E_{r0}\|_{L^1\cap L^2} + C(1+t)^{-\frac32}\| \partial_x^4 E_{r0}\|_{L^2} \le C_2\delta_0(1+t)^{-\frac32}.
\emas
And, we also have
\bmas
	\| \partial_x^\alpha B_r\|_{L^2} &\ge \|\partial_x^\alpha( G_e^{33} \ast B_{r0}) \|_{L^2} - \| \partial_x^\alpha(G_e^{31} \ast u_{r0}) \|_{L^2} - \| \partial_x^\alpha(G_e^{32} \ast E_{r0}) \|_{L^2} \\
	&\quad - \int_0^t\| \partial_x^\alpha[G_e^{31}(t-s) \ast h_3(s) + G_e^{32}(t-s) \ast h_4(s)] \|_{L^2} ds \\
	&\ge C_1d_0(1+t)^{-\frac{1}{4}-\frac{\alpha}{2}} - C_2\delta_0(1+t)^{-\frac{3}{4}-\frac{\alpha}{2}} -C_2\delta_0^2(1+t)^{-\frac14-\frac{\alpha}{2}}\\
	&\ge  C_3d_0(1+t)^{-\frac14-\frac{\alpha}{2}}.
\emas
The above estimates are holds for sufficiently large $t>0$ and small $\delta_0>0$.
\end{proof}
							
\medskip
\noindent {\bf Acknowledgements:}  This work is supported by the National Natural Science Foundation of China  grants No 12171104, the special foundation for Guangxi Ba Gui Scholars and Innovation Project of Guangxi Graduate Education YCBZ2024004.
\bigskip

\end{document}